\newtheorem{theorem}{Theorem}[section] 
\newtheorem{theoremA}{Theorem}
\newtheorem{corollaryA}[theoremA]{Corollary}
\newtheorem{lemma}[theorem]{Lemma} 
\newtheorem{proposition}[theorem]{Proposition}
\newtheorem{corollary}[theorem]{Corollary}
\theoremstyle{definition}
\newtheorem{definition}[theorem]{Definition}
\newtheorem{example}[theorem]{Example}
\newtheorem{remark}[theorem]{Remark}
\newcommand{\Deflation}[2]{{\rm Defl}(#1,#2)}
\newcommand{\Q}{{\mathbb Q}}
\newcommand{\Z}{{\mathbb Z}}
\newcommand{\N}{{\mathbb N}}
\newcommand{\HH}{{\mathcal H}}
\newcommand{\TT}{{\mathcal T}}
\newcommand{\LL}{{\mathcal L}}
\newcommand{\pc}{\sf pc}
\newcommand{\gam}{\Gamma} 
\newcommand{\dE}{{\vec{E}}} 
\newcommand{\be}{{\overline{e}}} 
\newcommand{\GG}{\mathcal{G}}
\newcommand{\lam}{\Lambda} 
\newcommand{\artgraph}{\Sigma} 
\newcommand{\Geo}{{\hbox{\sc{Geo}}}}
\newcommand{\SL}{{\hbox{\sc{sl}}}}
\newcommand{\SLex}{{\hbox{\sc{SL}}}}
\newcommand{\LHGeo}{{L^H({\hbox{\footnotesize{\Geo}}})}}
\newcommand{\groupid}{1}
\newcommand{\emptyword}{\epsilon}
\newcommand{\Rel}{{\sf Rel}\xspace} 
\newcommand{\SACA}{{\sf SACA}\xspace} 
\newcommand{\SSCA}{{\sf SSCA}\xspace} 
\newcommand{\gspan}[1]{\left\langle #1 \right\rangle}
\newcommand{\Yi}{\ensuremath{Y_i}}
\newcommand{\elen}[2]{\ensuremath{|#1|_{#2}}}  
\newcommand{\wlen}[1]{\ensuremath{|#1|}}     
\newenvironment{mylist}{\begin{list}{}{
\setlength{\parskip}{0mm}
\setlength{\topsep}{2mm}
\setlength{\parsep}{0mm}
\setlength{\itemsep}{0.5mm}
\setlength{\labelwidth}{7mm}
\setlength{\labelsep}{3mm}
\setlength{\itemindent}{0mm}
\setlength{\leftmargin}{12mm}
\setlength{\listparindent}{6mm}
}}{\end{list}}
\newcommand{\Addresses}{{
  \bigskip

  \textsc{Department of Mathematics, University of Nebraska,
    Lincoln, NE 68588-0130, USA}\par\nopagebreak
  \textit{E-mail address}: \texttt{hermiller@unl.edu}

  \medskip

  \textsc{Mathematics Institute,
    University of Warwick,
    Coventry CV4 7AL, UK}\par\nopagebreak
  \textit{E-mail address}: \texttt{D.F.Holt@warwick.ac.uk}

  \medskip

  \textsc{School of Mathematics, Statistics and Physics,
    University of Newcastle,
    Newcastle NE1 7RU,
    UK}\par\nopagebreak
  \textit{E-mail address}: \texttt{Sarah.Rees@ncl.ac.uk}

  \medskip

  \textsc{Hall College Center,
    84 Alford Rd,
    Bard College at Simon’s Rock,
    Great Barrington, MA}\par\nopagebreak
  \textit{E-mail address}: \texttt{tsusse@simons-rock.edu}

}}
\title{Automaticity for graphs of groups}
\author{Susan Hermiller, Derek F. Holt, Sarah Rees and Tim Susse}
\date{21 June 2020}
\begin{document}

\maketitle
\begin{abstract}
In this article we construct 
asynchronous and sometimes synchronous automatic
structures for amalgamated products and HNN extensions
of groups that are
strongly asynchronously (or synchronously) 
coset automatic with respect to the associated
automatic subgroups, subject to further geometric conditions.
These results are proved in the  general context
of fundamental groups of graphs of groups. 
The hypotheses of our closure results
are satisfied in a variety of examples such as
Artin groups of sufficiently large type, Coxeter groups, virtually
abelian groups, 
and groups that are hyperbolic relative to virtually
abelian subgroups.
\end{abstract}

\noindent 2010 Mathematics Subject Classification: 20F65, 20F10, 20E06, 20F36

\noindent Key words: Automatic group, automatic coset system,
graph of groups, relatively hyperbolic group,
3-manifold group, Artin group

\bigskip

\section{Introduction}
\label{sec:intro}

Closure properties for the classes of automatic and
asynchronously automatic groups are known for 
a variety of group constructions.
The class of automatic groups is
closed with respect to finite index supergroups and 
subgroups, 
direct products, 
free products~\cite[Chapter 12]{ECHLPT}, 
and graph products~\cite{HermillerMeier}.
For amalgamated products and HNN extensions, closure for
automaticity is also known in  some special cases.
Epstein et al.\ show in~\cite[Theorems~12.1.4,~12.1.9]{ECHLPT}
that an amalgamated product or HNN extension
of automatic groups along a finite subgroup is automatic,
while Baumslag et al. [4] show closure 
for automaticity under amalgamated products
with other technical restrictions. It is
proved in particular in \cite[Theorems~E,B,D]{BGSS} that amalgamated 
products of two finitely generated free groups over a finitely generated subgroup  are asynchronously automatic, and that amalgamated products of
two finitely generated abelian groups over a subgroup or of two
negatively curved groups over a cyclic subgroup are automatic.

This article derives automatic structures for new families of groups
as a consequence of constructive proofs of closure properties for the
class of groups with strong automatic coset systems.
The study of groups that are automatic relative to a subgroup was
introduced by Redfern in~\cite{Redfern}. 
But Redfern had slightly weaker conditions on the associated structures 
than we need, and our definition of
strong automatic coset systems comes from later work of Holt and Hurt
in~\cite{HoltHurt}, where some fellow travelling conditions were added. 
With either variant of the definition, an automatic coset system provides a
quadratic time algorithm for reducing an element of a coset of $H$ in $G$ to a
normal form representative of that coset and, in particular, a quadratic time
solution for the membership problem of elements of $G$ in $H$.

Our article constructs strong automatic coset systems
for fundamental groups of graphs of groups,
given that the vertex groups have such systems with respect to corresponding 
edge groups, and given certain geometric conditions.
In particular the construction can be applied to amalgamated
products and HNN extensions, given those conditions.
We build new automatic structures out of the automatic
coset systems we are now able to build.
Our results also generalise those of \cite{ECHLPT} relating to amalgamated
products and HNN extensions.

The main results of the article are
Theorems~\ref{thm:gog_coset},~\ref{thm:gog_coset_synch},
and~\ref{thm:concat_synch}.
The first two of these provide our most general combination theorems,
using only conditions introduced in Section~\ref{sec:basic}; the third 
deals with groups satisfying a particular condition on their geodesics.
Those three main results are
stated briefly at the end of this introduction, together with
a pair of corollaries, relating specifically to the fundamental groups
of compact 3-manifolds, and to Artin groups.

Related results on the construction of synchronous and asynchronous
automatic structures on amalgamated products and HNN extensions were
developed in \cite{BGSS}, and their generalisations to graphs of groups
were the topic of Shapiro's paper~\cite{Shapiro}. Some of our results
are very similar to 
some of these, but others are distinct. 
The earlier papers~\cite{BGSS} and~\cite{Shapiro} do
not involve automatic coset systems, but their hypotheses are related to
typical properties of automatic cosets systems (such as the subgroup
$H$ being quasiconvex in $G$), and their hypotheses are also related to our
conditions of crossover and stability, which we discuss below.
Another difference between their approach and ours is that
their normal forms are defined using left cosets of the subgroups (so subgroup
elements are situated at the right hand end of normal form words)
whereas ours use right cosets. Since the definition of automaticity involves
multiplying words by generators on the right, this difference is significant.

In Section~\ref{sec:basic} we give definitions and notation of basic
concepts used throughout the article.
The definitions of strong asynchronous and synchronous automatic coset 
systems for a group, subgroup pair $(G,H)$
are given in Section~\ref{subsec:cosetautdef}. In that section
we also prove Theorem~\ref{thm:coset2auto}, which constructs
asynchronous or synchronous automatic structures
for a group $G$, given a strong,  asynchronous or synchronous, automatic
coset system for $(G,H)$ and an automatic structure, asynchronous or synchronous, for $H$. We can combine this result with our combination
theorems for coset automatic systems to derive combination theorems
for automatic structures.

Section~\ref{subsec:xover} introduces the geometric conditions of limited
crossover and stability that are used in our main results,
and studies basic properties of these conditions.

The condition of limited crossover on a language $L^H$ of coset
representatives for $H$ in $G$, with respect to a given generating set $Y$
of $H$ and a generating set $Z$ of a possibly different subgroup of $G$,
limits the $Y$-length of $ugv^{-1}$ as a function of the $Z$-length of $g$
when $g \in \langle Z \rangle$ and $ugv^{-1} \in H$, with $u,v \in L^H$.
In our results about graphs of groups, we need to assume this condition when
$\langle Y \rangle$ and $\langle Z \rangle$ are the edge subgroups of two
edges with the same target vertex.

The condition of stability on an isomorphism $\phi$ between two
groups $H_1$ and $H_2$ relates the lengths of an element $h \in H_1$
and its image $\phi(h)$ over specified generating sets of $H_1$ and $H_2$,
respectively.

Section~\ref{sec:gog} is devoted to our first two main results
for graphs of groups $\GG$, Theorems~\ref{thm:gog_coset} and \ref{thm:gog_coset_synch}. 
These are the most general
combination theorems of this article, building respectively strong 
asynchronous and
strong synchronous coset systems for $\pi_1(\GG)$, given appropriate conditions of
crossover and stabliity on vertex, edge subgroup pairs (and in the second case
some further conditions).
Necessary definitions and
background on
graphs of groups $\GG$ are provided in Section~\ref{subsec:gogbackground},
including a description of Higgins'~\cite{Higgins} 
normal forms for $\pi_1(\GG)$.
Section \ref{subsec:gog_coset} contains the statement of
Theorem~\ref{thm:gog_coset}, together with asynchronous combination results 
Corollary~\ref{cor:amalgprod_coset} 
and Theorem~\ref{thm:amalgprod_coset2} 
specifically for amalgamated products $G_1 \ast_H G_2$,
and Corollary~\ref{cor:hnn_coset} 
specifically for HNN-extensions $G\ast_\phi$,
of strongly asynchronously coset automatic
group, subgroup pairs.
The proof of 
Theorem~\ref{thm:gog_coset} is given in Section~\ref{subsec:gog_coset_proof}. 
In Section~\ref{subsec:synchsub}, we prove 
Proposition~\ref{prop:synchsub} which derives
strong synchronous coset automaticity from its asynchronous form,
given a particular geometric condition.
We apply this to derive Theorem~\ref{thm:gog_coset_synch}, our general closure
result for graphs of groups that are strongly synchronously
coset automatic.

The remainder of the article is devoted to finding applications
of Theorems~\ref{thm:gog_coset} and~\ref{thm:gog_coset_synch}.
For some of these, such as
Theorem~\ref{thm:concat_synch},
some additional technical results are needed to derive them.
In general, we find applications by providing proofs of strong 
asynchronous or synchronous coset automaticity,
as well as crossover conditions and more, for 
various pairs $(G,H)$.
For many of our examples, we derive strong synchronous (rather than asynchronous)
coset systems.

Section~\ref{sec:relhyp} is devoted to relatively hyperbolic groups;
Section~\ref{subsec:relhypbackground}
contains definitions and a number of technical results that
we need.
Given a group $G$  hyperbolic relative to a set of subgroups, 
and a specified such subgroup $H$, the technical
result Proposition~\ref{prop:relhyp} provides
conditions under which we can find a strong synchronous automatic coset system
for $(G,H) $ that satisfies crossover and some other conditions we need.
Theorem~\ref{thm:relhyp_synch} uses a combination of 
Proposition~\ref{prop:relhyp},
and Theorems~\ref{thm:gog_coset_synch} and \ref{thm:coset2auto} to deduce 
strong synchronous coset automaticity relative to any peripheral subgroup 
of the fundamental group of a graph of relatively hyperbolic groups, 
under appropriate conditions on relevant subgroups and coset systems.

Results of Dahmani~\cite[Theorem~0.1]{Dahmani} and 
Antolin and Ciobanu~\cite[Corollary~1.8]{AC}
show that the fundamental group $G$ of an acylindrical 
graph of finitely generated groups
that are hyperbolic relative to
abelian subgroups, in which all of the edge groups are peripheral,  
is automatic;
in Corollary~\ref{cor:gogrelhypaut} we use
Theorem~\ref{thm:relhyp_synch} to give a new proof of this result.
The special case of Corollary~\ref{cor:gogrelhypaut}
in which the graph of groups arises from the
JSJ decomposition of a 3-manifold yields
Corollary~\ref{cor:relhyp3mfd},
which gives new automatic structures for fundamental groups of 
these 3-manifolds with respect to a Higgins language of normal forms
(that is, normal forms derived from the JSJ composition).
These fundamental groups were first shown to be automatic 
by Epstein et al.~\cite[Thm.~12.4.7]{ECHLPT} and Shapiro~\cite{Shapiro}, 
but the structure of the associated languages is not transparent from the proofs;
the results of Dahmani and of 
Antolin and Ciobanu provide a shortlex automatic structure.

Section~\ref{sec:concatenate} considers groups for which
geodesics of a subgroup $H$ of $G$ `concatenate up' to geodesics of $G$;
such a pair ($G,H)$ is
also referred to in the literature~\cite{Alonso, Antolin,chiswellHNN} as an 
`admissible pair'.
We observe in Section~\ref{subsec:coxartin} that this property
holds for Coxeter groups and for Artin groups of sufficiently large type, 
relative to their parabolic subgroups.
This property also holds for graph products of groups, relative to
sub-graph products~\cite{chiswellgp},~\cite[Prop.~14.4]{mann}.
We apply Theorem~\ref{thm:gog_coset} and Proposition~\ref{prop:synchsub} to deduce
Theorem \ref{thm:concat_synch}. We note that we can apply this to prove automaticity
of a variety of examples of Artin groups that were not previously known
to be automatic; a family of such examples is described in
Corollary~\ref{cor:newartin}.

Finally, in Section~\ref{sec:misc} we derive several results
involving abelian and virtually abelian groups.
Proposition~\ref{prop:ab} establishes strong coset automaticity 
with limited crossover in finitely generated abelian groups.
In Proposition~\ref{prop:vab} we prove 
that finitely generated virtually abelian groups $G$ are strongly coset
automatic with respect to any subgroup $H$; however, the
question of whether any crossover conditions hold in this case remains open. 
We also prove assorted results on the strong synchronous coset automaticity
of various types of amalgamated free products $G_1 *_H G_2$ for which
$(G_1,H)$ and $(G_2,H)$ are both strongly coset automatic;
in particular Proposition~\ref{prop:amalg_relhyp_ab} proves the strongly synchronous
coset automaticity of an amalgamated product of a finitely generated
abelian group and a group that is hyperbolic relative to a 
collection of abelian subgroups, where amalgamation is over one of those subgroups.

\subsection{Statement of main results}
Each of these results refers to a graph of groups
$\GG(\lam) = (\lam=(V,\dE), \{ G_v: v \in V \}, \{ G_e: e \in \dE \},
       \{\phi_e \mid e \in \dE\} )$
over a finite connected directed graph 
$\lam$.
(For more details see Definitions~\ref{def:gog} and~\ref{def:pigog} 
of Section~\ref{subsec:gogbackground};
but note in particular that we denote the initial and terminal
vertices of an edge $e \in \dE$ by $\iota(e)$ and $\tau(e)$ respectively
and that for each edge $e \in \dE$, there is an oppositely oriented
edge $\be$ with $\iota(\be) = \tau(e)$ and $\tau(\be)=\iota(e)$.)
We suppose that the groups $G_v$, $G_e$ are finitely generated,
with generating sets $X_v$ and $Y_e$,
respectively.

We refer the reader to Definitions~\ref{def:saca},~\ref{def:crossover},~\ref{def:stable}
for the meanings of strong coset automaticity, limited crossover and stablility, respectively.
\medskip

\noindent{\bf Theorem~\ref{thm:gog_coset}}
\emph{
Let $\GG=\GG(\lam)$ be a graph of groups as above,
and let $e_0$ be an edge of $\lam$.
Suppose that the following conditions hold for each $e \in \dE$.
\begin{mylist}
	\item[(i)]  The pair $(G_{\tau(e)},G_e)$ is strongly asynchronously 
	coset automatic with coset language 
	$L_{\tau(e)}^e \subseteq (X_{\tau(e)}^{\pm})^*$
	containing the empty word.
	\item[(ii)] 
	The triple $(G_e,G_\be,\phi_e)$ is stable 
            with respect to $(Y_e,Y_\be)$.
	\item[(iii)] For each $f \in \dE$ with $\tau(e)=\tau(f)$, 
	the coset language  
	$L_{\tau(e)}^e$ has limited crossover with respect to $(Y_e,Y_f)$.
\end{mylist}
Then the pair $(\pi_1(\GG),G_{e_0})$ is strongly asynchronously coset automatic.\\ 
}

\medskip

\noindent{\bf Theorem~\ref{thm:gog_coset_synch}}
\emph{
Let $\GG=\GG(\lam)$ be a graph of groups as above,
and let $e_0$ be an edge of $\lam$.
Suppose that the following conditions hold for each $e \in \dE$.
\begin{mylist}
        \item[(i)]  $Y_e \subseteq X_{\tau(e)}$.
	\item[(ii)]  The pair $(G_{\tau(e)},G_e)$ is strongly synchronously coset
	automatic with coset language 
        $L_{\tau(e)}^e$
       satisfying
       $
       L_{\tau(e)}^e \subset \Geo(G_{\tau(e)},X_{\tau(e)}) \cap
       [(X_{\tau(e)}^{\pm})^* \setminus Y_e^\pm(X_{\tau(e)}^{\pm})^*],
       $
       the only representative in $L_{\tau(e)}^e$ of the identity coset is $\emptyword$,
       and each element $g \in G_{\tau(e)}$ is represented by a 
       word $y_gz_g \in \Geo(G_{\tau(e)},X_{\tau(e)})$ 
       with $y_g \in (Y_e^\pm)^*$ and $z_g \in L_{\tau(e)}^e$.
	\item[(iii)] 
	The triple $(G_e,G_\be,\phi_e)$ is 1--stable 
         with respect to $(Y_e,Y_\be)$.
	\item[(iv)] For each $f \in \dE$ with $\tau(e)=\tau(f)$, 
	the coset language  
	$L_{\tau(e)}^e$ has limited crossover with respect to $(Y_e,Y_f)$.
\end{mylist}
Then the pair $(\pi_1(\GG),G_{e_0})$ is strongly synchronously coset automatic. \\
}

\medskip

\noindent{\bf Corollary}~\ref{cor:relhyp3mfd}
\emph{
Let $M$ be an orientable,
connected, compact 3-manifold with incompressible toral boundary
whose prime factors have JSJ decompositions containing only hyperbolic pieces.
Then the group $\pi_1(M)$ is automatic, with
respect to a Higgins language of normal forms.\\
}

\medskip

\noindent{\bf Theorem~\ref{thm:concat_synch}}
\emph{
Let $\GG=\GG(\lam)$ be a graph of groups as above.
Suppose that the following conditions hold for each $e \in \dE$.
\begin{mylist}
\item[(i)]  $Y_e \subseteq X_{\tau(e)}$.
\item[(ii)] $\Geo(G_e,Y_e)$ concatenates up to 
$\Geo(G_{\tau(e)},X_{\tau(e)})$.
\item[(iii)] The triple $(G_e,G_\be,\phi_e)$ is 1-stable 
  with respect to $(Y_e,Y_\be)$.
\item[(iv)]  $G_{\tau(e)}$ is shortlex automatic
with respect to an ordering of $X_{\tau(e)}$ in which all letters of $Y_e^\pm$
precede all letters of $X_{\tau(e)}^\pm \setminus Y_e^\pm$.
\end{mylist}
Let $\LL$ be the set of coset languages $\SLex_{G_{\tau(e)}}^{G_e}$,
for $e \in \dE$, and let $\TT$ be any maximal tree in $\lam$.
Then, for each $e_0 \in \dE$, the pair $(\pi_1(\GG),G_{e_0})$ is strongly synchronously coset automatic, 
with the Higgins coset language $L:= L(\GG,\LL,e_0,\TT)$.
Furthermore
$L \subseteq \Geo^{G_{e_0}}$,
and the group $\pi_1(\GG)$ is automatic.\\
}

\medskip

\noindent{\bf Corollary~\ref{cor:newartin}}
\emph{
Let $\artgraph$ be a Coxeter graph of sufficiently large type.
Given arbitrary subgraphs $\Lambda_1,\Lambda_2,\ldots,\Lambda_k$ 
of $\artgraph$, suppose that the
Coxeter graph $\artgraph'$ is formed by adjoining new vertices 
$v_1,v_2,\ldots,v_k$ to $\artgraph$ together with 
the following edges from each $v_i$:
\begin{mylist}
\item[] to each vertex of $\Lambda_i$, with the label $2$;
\item[] to each vertex of $\artgraph \setminus \Lambda_i$, with the label $\infty$;
\item[] to each vertex
$v_j$ with $j \neq i$, with the label $\infty$.
\end{mylist}
Then the Artin group $A_{\artgraph'}$ is automatic. 
}
\subsection*{Acknowledgments}


The first author was partially supported by grants from
the National Science Foundation (DMS-1313559) and
the Simons Foundation (Collaboration Grant number 581433).

\section{Coset Automaticity and Crossover} \label{sec:basic}

\subsection{Notation}
\label{subsec:notation}

Let $G$ be a group.
Throughout this article, all generating sets for all groups
will be assumed to be finite.
Let $X$ be a finite generating set for $G$.
We write $X^\pm$ for $X \cup X^{-1}$.
We denote the length of a word $w \in X^\pm$ by $|w|$.

We denote by $\gam(G)$  (or $\gam(G,X)$ if it is necessary to specify $X$)
the Cayley graph of $G$
and let $d_{\gam(G)}$ be
the path metric in $\gam(G)$.
For each $g \in G$, we denote the length of a shortest word over
$X^\pm$ that represents $g$ by $\elen{g}{X}$, and call that the 
{\em $X$-length} of $g$.
For any $g \in G$ and $w \in X^\pm$, let ${}_g w$ denote
the path in $\gam(G)$ starting at the vertex $g$ and labelled by the
word $w$.

We write $\groupid$ for the identity element of $G$, and $\emptyword$
for the empty word in $(X^\pm)^*$.
For two words $w,x \in (X^\pm)^*$, we write $w=x$ if $w$ and $x$ are
the same word, and $w =_G x$ if $w$ and $x$ represent the same element of $G$.

\subsection{Automatic coset systems and automatic structures}
\label{subsec:cosetautdef}

As before, let $G = \langle X \rangle$ with $|X| < \infty$. 
We define a {\em language for $G$} (over $X$) to be a set of words over $X^\pm$
that contains at least one representative of each element of $G$.
Examples are provided by $\Geo(G,X)=\Geo$, the set of all words $w$ over $X^\pm$
that are minimal length representatives of the elements of $G$ they define, 
and $\SLex \subseteq \Geo$, the set of all words $w$ over $X^\pm$ that are minimal
representatives of the elements they define with respect to the shortlex
ordering (defined using some fixed ordering of $X^\pm$).

Let $H$ be a finitely generated subgroup of $G$.
A {\em coset language} for $(G,H)$ is a set $L^H$ (or $L_G^H$ if
it is necessary to specify $G$) of words over $X^\pm$ that contains 
at least one representative of each right coset $Hg$ of $H$ in $G$.

Examples of coset languages are provided by $\Geo^H$ 
(sometimes called $\Geo^H_G$ or even $\Geo^H_G(X)$), 
the set of all words $w$ over $X$
for which $w$ is of minimal length as a representative of $Hw$,
and $\SLex^H \subseteq \Geo^H$ (sometimes called $\SLex^H_G$ or $\SLex^H_G(X)$), 
the set of all words $w$ over $X$ for which
$w$ is minimal with respect to the shortlex ordering as a representative of
$Hw$ (with respect to some fixed ordering of $X^\pm$). 

Given a word $w$ in $(X^\pm)^*$ and a natural number $t$, let
$w(t)$ denote the element of $G$
represented by the prefix of $w$ of length $t$; in the case that $t > \wlen{w}$,
let $w(t) =_G w$.  Two paths ${}_1w$ and ${}_hw'$
in the Cayley graph $\gam(G,X)$
are said to {\em synchronously $K$-fellow travel}
if for all $t \in \N$ we have $d_{\gam(G)}(w(t),hw'(t)) \le K$.  
The paths ${}_1w$ and ${}_hw'$ are said to {\em asynchronously $K$-fellow
travel} if there exists nondecreasing surjective functions
$\phi_1,\phi_2:\N \rightarrow \N$ such that
for all $t \in \N$ we have $d_{\gam(G)}(w(\phi_1(t)),hw'(\phi_2(t))) \le K$.

\begin{definition}\label{def:saca}
A {\em strong asynchronous automatic coset system}
for $(G,H)$ is defined to be a coset language $L^H \subseteq X^\pm$
together with a constant $K$, such that
\begin{mylist}
\item [(i)] 
$L^H$ is a regular language (that is, the language of a finite state automaton),
\item [(ii)] if $v,w \in L^H$ and $h \in H$ with $d_{\gam(G,X)}(v,hw) \leq 1$,
then the paths ${}_1v$ and ${}_hw$ in $\Gamma(G)$ asynchronously $K$-fellow travel.
(So, in particular, we have $|h|_X \le K$.)
\end{mylist}
If $(G,H)$ has a strong asynchronous automatic coset system as above,
then we say that $(G,H)$ is {\em strongly asynchronously coset automatic}
(or \SACA for short), with
{\em coset language} $L^H$, and {\em fellow traveller constant} $K$.
If the fellow traveller condition above can be replaced by a synchronous 
fellow traveller condition, then we say that $G$ is {\em strongly synchronously
coset automatic} (or \SSCA) or just {\em strongly coset automatic}.
  \end{definition}

We note that our definition of $\SSCA$ matches the definition of coset automaticity 
in \cite{HoltHurt}.  Moreover, in the case that
the subgroup $H$ is the trivial group, the definition of $\SSCA$ is equivalent 
to the definition of automatic~\cite{ECHLPT}. 
We refer the reader to~\cite{ECHLPT} 
or~\cite{HRRbook} for
further information on fellow traveller properties, 
regular languages, finite state automata, 
and automatic groups.

The following result allows us to construct automatic structures for groups
from automatic coset systems.  

\begin{theorem}
\label{thm:coset2auto}
Let $G=\langle X \rangle$ be a group, and $H=\langle Y \rangle$ a subgroup of
$G$. Suppose that $(G,H)$ is strongly asynchronously coset automatic with
language $L^H$ with respect to $X$, and $H$ is asynchronously
automatic with language $L_H$ with respect to $Y$.  
Then
\begin{mylist}
\item[(i)] the group $G$ is asynchronously automatic over $X \cup Y$, with
language $L:= L_HL^H$ (the concatenation of $L_H$ and $L^H)$;
\item[(ii)] if $(G,H)$ is strongly coset automatic and 
$H$ is automatic (that is, both structures are synchronous), 
then $G$ is automatic.
Furthermore, if $L_H$ and $L^H$ are both synchronous structures and
$L_H$ contains only finitely many representatives of each element
of $H$, then the language $L:= L_HL^H$ is the language of a synchronous
automatic structure for $G$. 
\end{mylist}
\end{theorem}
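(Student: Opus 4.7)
The plan is to verify, for the language $L = L_H L^H$ over $X \cup Y$, the three defining conditions of an asynchronous automatic structure on $G$: regularity, surjectivity onto $G$, and the asynchronous fellow traveller property. Regularity is immediate since $L$ is a concatenation of regular languages. Surjectivity holds because every right coset $Hg$ has a representative $u_2 \in L^H$, giving $g = hu_2$ for some $h \in H$; picking $u_1 \in L_H$ representing $h$ gives $u_1 u_2 \in L$ representing $g$.

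The main step is the fellow traveller property. Given $u = u_1 u_2, w = w_1 w_2 \in L$ with the obvious factorisations, and $uz =_G w$ for some $z \in (X \cup Y)^\pm \cup \{\groupid\}$, set $h := u_1^{-1} w_1 = u_2 z w_2^{-1} \in H$. Then $u_2 z =_G h w_2$, so $d_{\gam(G,X)}(u_2, h w_2) \le |z|_X \le M$, where $M$ is a constant depending only on $X$ and $Y$ (at most $1$ if $z \in X^\pm \cup \{\groupid\}$, and at most $\max_{y \in Y} |y|_X$ if $z \in Y^\pm$). A generalised form of the \SACA fellow traveller property --- obtained by choosing a short $X$-path from $u_2$ to $h w_2$ in $\gam(G,X)$, picking $L^H$-representatives of the cosets at each of the (boundedly many) intermediate vertices, and iterating the hypothesis of Definition~\ref{def:saca}(ii) a bounded number of times --- then shows that the second halves ${}_{u_1} u_2$ and ${}_{w_1} w_2$ of ${}_1 u$ and ${}_1 w$ asynchronously $K'$-fellow travel in $\gam(G,X) \subseteq \gam(G,X \cup Y)$, for a $K'$ depending only on $M$ and the \SACA data.

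Evaluating this fellow travel at time zero forces $|h|_X \le K'$; since $H$ meets the $X$-ball of radius $K'$ in finitely many elements, $|h|_Y$ is bounded by a constant $C$ depending only on the structures. Applying the analogous generalised fellow traveller property for the asynchronous automatic structure on $H$ to the pair $(u_1, w_1) \in L_H \times L_H$ with $u_1^{-1} w_1 = h$ of $Y$-length at most $C$ then yields an asynchronous fellow travel of the first halves ${}_1 u_1$ and ${}_1 w_1$ in $\gam(H,Y) \subseteq \gam(G, X \cup Y)$. Concatenating the two asynchronous fellow travels at the junction points $u_1, w_1$ gives the required asynchronous fellow travel of ${}_1 u$ and ${}_1 w$, completing (i).

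For (ii) the same decomposition works with synchronous fellow travelling throughout; the key new obstacle is to splice the two synchronous halves at the junction, which requires $||u_1| - |w_1||$ to be bounded. This is where the hypothesis that $L_H$ contains only finitely many representatives of each element of $H$ is used: combined with synchronous fellow travelling in $L_H$, it forces any two $L_H$-representatives of elements at bounded $Y$-distance in $H$ to have bounded length difference. That bound, together with $|h|_Y \le C$, allows the two halves to be concatenated synchronously (after at most bounded padding at the junction), yielding a synchronous automatic structure on $G$.
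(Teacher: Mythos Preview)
Your proposal is correct and follows essentially the same approach as the paper: both arguments split the fellow travel into the $L^H$-half and the $L_H$-half, use the coset fellow traveller property (iterated along a short $X$-path when $z \in Y^\pm$) to handle the second halves and to bound $|h|$, then invoke the $L_H$ fellow traveller for the first halves; for (ii) both use the fact that in a synchronous structure with finitely many representatives per element, words ending a bounded distance apart have bounded length difference (the paper cites \cite[Thm~2.3.9]{ECHLPT} for this). One small point: the first assertion of (ii) does not assume finite representatives in $L_H$, and the paper handles this by first replacing $L_H$ with a unique-representative sublanguage via \cite[Thm~2.5.1]{ECHLPT}; you leave this reduction implicit.
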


Note that we do not require $X$ and $Y$ to be disjoint.
\begin{proof}
Suppose first that $L_H$ and $L^H$ satisfy the 
asynchronous $K$--fellow traveller property.
Since regular languages are closed under concatenation,
the language $L$ is regular.
We shall verify an asynchronous fellow traveller property for $L$
with constant $\kappa K^2$, where $\kappa$ is the maximum $X$-length of any
$y \in Y$.

Suppose that $wv$, $w'v'$ are words in $L$ with $w,w' \in L_H$ and
$v,v' \in L^H$. First suppose that $wv=_G w'v'$.
In this case $v$ and $v'$ represent the same coset of $H$ in $G$, and so
pair of paths ${}_1 v$ and ${}_{w^{-1}w'} v'$, and hence also the pair
${}_w v$ and ${}_{w'} v'$, asynchronously $K$-fellow travel. In particular
$d_{\Gamma(G)}(w,w') \leq K$, and hence, applying the fellow property for $L_H$
we deduce that $w,w'$ asynchronously fellow travel at distance $K^2$.

Similarly, if $x \in X$ and $wvx=_G w'v'$,
then the same argument shows that the paths 
${}_wv$ and $_{w'}v'$ $K$-fellow travel, 
and $w$ and $w'$ $K^2$-fellow travel. 

Finally, suppose that $y \in Y$, and that $wvy=_G w'v'$. 
Writing $y=x_1\cdots x_{\kappa}$ with each $x_i \in X^\pm$,
we have that the paths $_wv$ and $_{w'}v'$ asynchronously 
$\kappa K$--fellow travel, and so
$w$ and $w'$ fellow travel at distance $\kappa K^2$.

In all cases the paths $_1w$ and $_1w'$ asynchronously 
$\kappa K^2$--fellow travel, and the paths $_wv$ and $_{w'}v'$ 
asynchronously $\kappa K$--fellow travel. 
Thus, the paths $_1wv$ and $_1w'v'$ asynchronously 
$\kappa K^2$--fellow travel, as desired.
This proves (i).

To prove (ii),
suppose that $L_H$ and $L^H$ are the languages of synchronous structures.
If $L_H$ contains infinitely many representatives 
of some elements of $H$ then, by~\cite[Thm~2.5.1]{ECHLPT}, we can replace
it by a language consisting of unique representatives. So, in proving
the first assertion of (ii), we may
assume that $L_H$ contains only finitely many such representatives.
Let $K$ be a synchronous fellow traveller constant for both structures.
Then by~\cite[Thm 2.3.9]{ECHLPT}, there is a constant $N$ such
that whenever $u,u' \in L_H$ and the paths ${}_1u$ and ${}_1u'$ end
a distance at most 1 apart, the lengths of the words
$u$ and $u'$ differ by at most $N$.

Let $w, w'\in  L_H$ and $v, v'\in L^H$ satisfy 
$wvx =_G w'v'$ for some $x\in X\cup Y\cup\{\emptyword\}$. 
As in the proof above, we see that the paths ${}_wv$, ${}_{w'}v'$ synchronously 
$\kappa K$--fellow travel and so the paths ${}_1w$ and ${}_1w'$ end 
at distance at most $\kappa K$ apart in $\gam(G,X)$.  Hence
the paths ${}_1w$ and ${}_1w'$ synchronously $\kappa K^2$--fellow travel,
and their lengths differ by at most $\kappa K N$. 
Thus, the paths ${}_1wv$ and ${}_1w'v'$ synchronously 
$\kappa K^2+\kappa K N$--fellow travel.
\end{proof}

\subsection{Crossover and stability}\label{subsec:xover}

The properties of crossover and stability for coset systems are fundamental
for us in order to state and prove the results of Section~\ref{sec:gog}.

\begin{definition}\label{def:crossover} 
Let $Y$ and $Z$ be finite subsets of a group $G$, and let $H=\langle Y \rangle$.
Let $1 \leq \lambda \in \N$.
We say that the coset language $L^H$ for $(G,H)$ has 
{\em $\lambda$-limited crossover}
with respect to $(Y,Z)$ if,
for any $g \in \langle Z \rangle$ with $|g|_{Z} \leq \lambda$, and any 
$u,v \in L^H$  with $ug \in Hv$, 
we have $|ugv^{-1}|_{Y} \leq \lambda$.
We say that $L^H$ has {\em limited crossover} with respect to $(Y,Z)$ 
if it has $\lambda$-limited crossover for some $\lambda$.
If $Y=Z$, we use the term limited crossover {\em with respect to $Y$}.
\end{definition}

As we shall show in Sections~\ref{sec:relhyp},~\ref{sec:concatenate}, 
and~\ref{sec:misc}, 
the limited crossover property is satisfied, for example, by Coxeter
groups and by Artin groups of large type where $Y$ and $Z$ are arbitrary
subsets of the standard generating sets, by finitely generated abelian groups
with $Y=Z$ an arbitrary finite subset, and by groups that are relatively hyperbolic
with respect to virtually abelian or hyperbolic parabolic subgroups.

The following result will be useful for finding common crossover constants
for a collection of languages and generating sets.

\begin{lemma}
\label{lem:multiply_crossover}
Suppose that for finite subsets $Y,Z$ of a group $G=\langle X \rangle$, 
$H=\langle Y \rangle$,
$1 \leq \lambda \in \N$, and
coset language $L^{H} \subseteq (X^\pm)^*$ for $(G,H)$, the set
$L^H$ has $\lambda$-limited 
crossover with respect to $(Y,Z)$. Then 
$L^H$ has $k\lambda$-limited crossover with respect to $(Y,Z)$, for
any $k \in \N$.
\end{lemma}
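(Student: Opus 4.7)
The plan is to prove this by a telescoping argument: split a word of $Z$-length at most $k\lambda$ into $k$ pieces each of $Z$-length at most $\lambda$, insert coset representatives between consecutive pieces, and apply the hypothesis once to each piece.

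More concretely, fix $k \in \N$ and suppose $g \in \langle Z \rangle$ with $|g|_{Z} \leq k\lambda$, and $u,v \in L^H$ with $ug \in Hv$. Write $g$ as a minimal-length word $z_1 z_2 \cdots z_n$ over $Z^\pm$ with $n \leq k\lambda$, and group the letters into $k$ consecutive blocks, so that $g = g_1 g_2 \cdots g_k$ with each $g_i \in \langle Z \rangle$ satisfying $|g_i|_Z \leq \lambda$. For $1 \leq i \leq k-1$, pick any coset representative $u_i \in L^H$ of the right coset $H u g_1 g_2 \cdots g_i$ (such a representative exists since $L^H$ is a coset language), and set $u_0 := u$ and $u_k := v$. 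Note $u_k \in L^H$ represents $H u g_1 \cdots g_k = H v$ by hypothesis, so this is consistent.

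The key observation is that for each $i \in \{1,\ldots,k\}$, we have $u_{i-1} g_i \in H u g_1 \cdots g_i = H u_i$, with $u_{i-1}, u_i \in L^H$ and $|g_i|_Z \leq \lambda$. Hence $\lambda$-limited crossover applies, giving an element $w_i := u_{i-1} g_i u_i^{-1} \in H$ with $|w_i|_Y \leq \lambda$. A telescoping identity then gives
\[
u g v^{-1} \;=\; u g_1 g_2 \cdots g_k v^{-1} \;=\; (u_0 g_1 u_1^{-1})(u_1 g_2 u_2^{-1}) \cdots (u_{k-1} g_k u_k^{-1}) \;=\; w_1 w_2 \cdots w_k,
\]
so $|u g v^{-1}|_Y \leq \sum_{i=1}^k |w_i|_Y \leq k\lambda$, as required.

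There is no real obstacle: the argument is essentially a single telescoping identity together with a careful choice of intermediate coset representatives in $L^H$. The only point requiring any care is verifying that each intermediate product lies in $H$ so that the hypothesis is applicable, which follows immediately from the definition of the $u_i$ as coset representatives of $H u g_1 \cdots g_i$.
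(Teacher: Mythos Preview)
Your proof is correct and follows essentially the same approach as the paper: decompose $g$ into $k$ factors of $Z$-length at most $\lambda$, choose intermediate coset representatives in $L^H$, apply $\lambda$-limited crossover to each factor, and telescope. The paper's argument is identical in structure, with only notational differences.
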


\begin{proof}
Suppose that $g\in \langle Z \rangle$ with $|g|_Z \leq k\lambda$ and that
$u,v \in L^H$ satisfy $ug \in Hv$. Then we can decompose $g$ as a product 
$g=g_1\cdots g_k$ of elements $g_i \in \langle Z \rangle$, with each
$|g_i|_Z \leq\lambda$.  We choose $v_1,\ldots,v_k \in L^H$ such that, for each
$i$, $ug_1\cdots g_i \in Hv_i$; in particular we choose $v_k=v$.
Then we have
$ug_1 \in Hv_1$, and for each $i=2,\ldots,k$ $v_{i-1}g_i \in Hv_i$.
We deduce from the limited crossover condition that 
$ug_1v^{-1}$ and all of the elements $v_{i-1}g_iv_i^{-1}$ for $2 \leq i \leq
k$ have $Y$-length at most $\lambda$. The product of these elements has
$Y$-length at most $k\lambda$ and is equal to $ugv^{-1}$.
\end{proof}

The following stronger version of crossover leads to a variant of the result of
Corollary~\ref{cor:amalgprod_coset} for amalgamated products, proved in 
Theorem~\ref{thm:amalgprod_coset2}.

\begin{definition}\label{def:maxxover}
Let $Y,Z$ be finite subsets of $G$, $H=\langle Y \rangle$, and $1 \leq \lambda \in \N$.
We say that a coset language $L^H$ for $(G,H)$ has 
{\em $\lambda$-maximal crossover}
with respect to $(Y,Z)$ if, for any $g \in \langle Z \rangle$ and any
$u,v \in L^H$  with
$u \not\in H$ and $ug \in Hv$, we have $|ugv^{-1}|_Y \leq \lambda$.
If $Y=Z$, then we use the term $\lambda$-maximal crossover {\em with respect to} $Y$.
We say that $L^H$ has {\em maximal crossover} with respect to the subgroup
$\langle Z \rangle$ of $G$ 
if $L^H$ has $\lambda$-maximal crossover with respect to
$(Y',Z)$ for some generating set $Y'$ of $H$ and $1 \le \lambda \in N$.
\end{definition}

We note that the maximal crossover property would not hold in the case 
that $H$ and $G/H$ are infinite
if we did not impose 
the condition $u\not\in H$, 
but we do not need that condition in the definition of limited crossover.
It is straightforward to show that if the maximal crossover property holds for some
finite generating set $Y$ of $H$, then it holds (but probably with a different 
parameter $\lambda$) for any other finite generating set. Further,
for some finite generating set of $H$, $L^H$ has 1-maximal crossover.

This stronger property of maximal crossover is unusual but, 
as we shall show in Section~\ref{subsec:relhypxover}, it holds for groups
that are hyperbolic relative to a collection of finitely generated groups that
are either virtually abelian or hyperbolic, where $Y$ and $Z$ are suitably
chosen generating sets of two of the parabolic subgroups.

\begin{definition}\label{def:stable}
 Suppose that $H_1=\langle Y_1\rangle$ and $H_2=\langle Y_2 \rangle$
are isomorphic groups with isomorphism $\phi:H_1 \to H_2$.
We say that $(H_1,H_2,\phi)$ is \emph{$\mu$-stable} with respect to $(Y_1,Y_2)$ 
if, whenever $h \in H_1$ with
$|h|_{Y_1}\leq \mu$, we have $|\phi(h)|_{Y_2} \leq \mu$.
Provided that each of $H_1,H_2$ is associated with just one generating set, we may
omit the phrase `with respect to $(Y_1,Y_2)$', and in general we shall do that.
We say that $(H_1,H_2,\phi)$ is \emph{stable} if it is $\mu$-stable for some $\mu$.
\end{definition}

We have the following results.

\begin{lemma}
\label{lem:multiply_stability}
For groups $H_1=\langle Y_1 \rangle, H_2=\langle Y_2 \rangle$, related by an isomorphism $\phi:H_1 \rightarrow H_2$ of $G$,
if  $(H_1,H_2,\phi)$ is $\mu$-stable, then it
is also $k\mu$-stable for any $k \in \N$.
\end{lemma}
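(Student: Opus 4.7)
The plan is to follow the same pattern as the proof of Lemma \ref{lem:multiply_crossover}: decompose, apply the hypothesis to each piece, and reassemble using the fact that word length is subadditive under products. This is really the additive/multiplicative analogue of that earlier lemma in the setting of isomorphisms rather than coset languages.

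First I would take an arbitrary $h \in H_1$ with $|h|_{Y_1} \leq k\mu$ and choose a geodesic word $w$ over $Y_1^\pm$ representing $h$. I would then partition $w$ into $k$ consecutive (possibly empty) subwords $w_1,\ldots,w_k$ each of length at most $\mu$; letting $h_i$ denote the element of $H_1$ represented by $w_i$, this yields a factorisation $h = h_1h_2\cdots h_k$ with $|h_i|_{Y_1} \leq \mu$ for each $i$.

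Next I would invoke $\mu$-stability of $(H_1,H_2,\phi)$ on each factor to conclude $|\phi(h_i)|_{Y_2} \leq \mu$. Since $\phi$ is a homomorphism, $\phi(h) = \phi(h_1)\phi(h_2)\cdots\phi(h_k)$, and concatenating geodesic representatives of each $\phi(h_i)$ gives a word over $Y_2^\pm$ of length at most $k\mu$ representing $\phi(h)$. Hence $|\phi(h)|_{Y_2} \leq k\mu$, as required.

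There is no real obstacle here; the only mild bookkeeping point is ensuring exactly $k$ pieces when $|h|_{Y_1} < k\mu$, which is handled by permitting some of the $w_i$ to be empty (so that the corresponding $h_i = \groupid$ and contributes $0$ to the bound). The argument parallels Lemma~\ref{lem:multiply_crossover} so closely that it could be stated in essentially one sentence.
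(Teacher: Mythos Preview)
Your proof is correct and is exactly the argument the paper has in mind: the paper omits the proof entirely, noting only that it is nearly identical to that of Lemma~\ref{lem:multiply_crossover}, and your decomposition-into-$k$-pieces-of-length-at-most-$\mu$ argument is precisely that analogue.
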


We omit the proof of this, which is nearly identical 
to the proof of Lemma~\ref{lem:multiply_crossover}.

\begin{lemma} \label{lem:basic_properties}
\begin{mylist}
\item[(i)]
Let $X_1$ and $X_2$ be finite generating sets for a group $G$, and
let $L_1^H$ be a coset language for the pair $(G,H)$ with respect to $X_1$.
Then there is a coset language $L_2^H$ for $(G,H)$ with
respect to $X_2$, such that the subset of $G$ represented by the words in
$L_2^H$ is the same as for $L_1^H$ and such that, if any of the properties
\SACA, \SSCA, limited crossover, or maximal crossover
hold in $L_1^H$, then they also hold in $L_2^H$.
\item[(ii)] If $H \le F \le G$ with $|G:F|$ finite, and $(G,H)$ has
a coset language $L_G^H$ satisfying any of	
\SACA, \SSCA, or 
limited crossover or maximal crossover
relative to a pair of finite generating sets for $H$,
then there is a coset language $L_F^H$ for $(F,H)$ with the same properties.
Furthermore, we can choose $L_F^H$ such that the subset of the group
that it represents is the intersection with $F$ of the subset represented
by $L_G^H$.
\item[(iii)] If $H \le G \le F$ with $|F:G|$ finite and 
$(G,H)$ is strongly asynchronously (resp. synchronously) coset automatic, 
then so is $(F,H)$.
\end{mylist}
\end{lemma}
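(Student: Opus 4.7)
The plan is to handle the three parts by explicit constructions that reuse the fellow-traveller data from the given coset language and pay only a bounded penalty in the parameters; the crossover and maximal crossover conditions are purely statements about $Y$-lengths of subgroup elements $ugv^{-1}$, with no reference to the generating set of $G$, so they are automatically preserved whenever the two coset languages represent the same (or an appropriately restricted) subset of $G$. For (i), fix for each $x \in X_1^\pm$ a word $\tau(x) \in (X_2^\pm)^*$ representing the same element and extend $\tau$ to a monoid morphism; set $L_2^H := \tau(L_1^H)$. Regularity is preserved by substitution and $L_1^H$ and $L_2^H$ represent the same subset of $G$, so writing $c$ for the maximum of $|\tau(x)|_{X_2}$ and the analogous quantity in the reverse direction, a $K$-asynchronous fellow-travelling in $\gam(G,X_1)$ descends to a $cK$-asynchronous one of the $\tau$-images in $\gam(G,X_2)$; synchronicity transfers by the standard reparametrisation, as in \cite[Thm~2.4.1]{ECHLPT}.

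For (ii), $F$ is finitely generated because $|G:F|$ is finite. Fix a right transversal $T \subseteq G$ of $F\backslash G$ with $1 \in T$, let $t : G \to T$ send $g$ to its representative, and take $S := \{t(g)\, x\, t(gx)^{-1} : g \in G,\, x \in X^\pm\}$ as the finite Schreier generating set of $F$. A finite-state length-preserving transducer $\rho : (X^\pm)^* \to (S^\pm)^*$ tracks the current coset in $F\backslash G$ and rewrites any word representing an element of $F$ into an $S^\pm$-word for the same element; define $L_F^H$ as $\rho$ applied to the intersection of $L_G^H$ with the (regular) set of words representing elements of $F$. The fellow-traveller transfer is governed by the retraction $\pi : G \to F$ defined by $\pi(g) := g\, t(g)^{-1}$: one verifies directly that $\pi(hg) = h\pi(g)$ whenever $h \in H \le F$ (since $t(hg) = t(g)$), that $\pi$ sends each $X$-edge to an $S$-edge or a loop, and that $\rho(w)(k) = \pi(w(k))$ for every prefix length $k$. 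Hence $K$-(a)synchronous fellow-travelling in $\gam(G,X)$ descends to $K$-(a)synchronous fellow-travelling of the rewritten paths in $\gam(F,S)$.

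For (iii), let $T$ be a right transversal of $G$ in $F$ with $1 \in T$, so each $f \in F$ has a unique expression $gt$ with $g \in G$ and $t \in T$; the relation $Hgt = Hg't'$ in $F$ forces $t = t'$ and $Hg = Hg'$. Enlarge the generators to $X_F := X \cup T$ (each $t$ treated as a single letter) and define $L_F^H := L_G^H \cdot T$, which is regular. To verify the fellow-traveller property, suppose $wt,\, w't' \in L_F^H$ satisfy $wt =_F h w't' z$, where $h \in H$ and $z$ is empty or a single $X_F^\pm$-letter. Then $t' z t^{-1}$ equals an element $g_0 \in G$ whose $X$-length is bounded by a constant $B$ depending only on $T$ and $z$, and $w =_G h w' g_0$; iterating the fellow-traveller property for $L_G^H$ along a length-$B$ factorisation of $g_0$, passing through $L_G^H$-representatives of the intermediate cosets, produces a fellow-travelling of ${}_1 w$ and ${}_h w'$ with parameter bounded in terms of $K$ and $B$, and appending the single $T$-letters $t$ and $t'$ extends this to the full paths.

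The main technical obstacle is the preservation of the synchronous fellow-traveller property, most visibly in (iii), where the iteration naturally yields only an asynchronous bound. To recover synchronicity one must also control the lengths of the intermediate $L_G^H$-representatives, for instance via the length-difference inequality \cite[Thm~2.3.9]{ECHLPT} already invoked in the proof of Theorem~\ref{thm:coset2auto}, or by passing at the outset to a sublanguage of $L_G^H$ consisting of unique representatives; I expect this bookkeeping, rather than any deeper conceptual point, to be the only real difficulty.
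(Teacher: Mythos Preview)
Your approach coincides with the paper's. For (i) you use the substitution homomorphism and invoke \cite[Thm~2.4.1]{ECHLPT}, exactly as the paper does, and your remark that the crossover conditions depend only on the image of the language in $G$ is the paper's argument verbatim. For (ii) and (iii) the paper writes only that these are ``straightforward adaptations of the proof of \cite[Theorem~4.1.4]{ECHLPT}'' and gives no further detail; your Schreier-generator rewriting for (ii) and the transversal product $L_F^H := L_G^H \cdot T$ for (iii) are precisely those adaptations written out.

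Two small corrections. In (i), setting $L_2^H := \tau(L_1^H)$ as written need not preserve \emph{synchronous} fellow travelling when the images $\tau(x)$ have varying lengths; the construction in \cite[Thm~2.4.1]{ECHLPT} that you cite fixes this by padding so that all $\tau(x)$ have a common length (which can be done over $X_2^\pm$ itself, using short words such as $yy^{-1}$ for the identity), and that modification is what you actually need here. Second, your closing concern about synchronicity in (iii) is misplaced: synchronous fellow travelling composes directly under the triangle inequality. If for each $i$ and every $t$ one has $d\bigl(p_{i-1}w_{i-1}(t),\,p_i w_i(t)\bigr) \le K$ (where $p_i = h_1\cdots h_i$), then $d\bigl(w(t),\,p_B w_B(t)\bigr) \le BK$ for all $t$, and one further application of the \SSCA property for the pair $(w_B,w')$ in the same coset finishes. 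There is no need to bound the lengths of the intermediate representatives or to pass to a sublanguage with uniqueness.
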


We note that in (iii) it is not clear that either of the
crossover properties is preserved.

\begin{proof}
(i) Let $\pi_1\colon X_1^*\to G$ and $\pi_2\colon X_2^*\to G$ be 
the natural projection maps.
For each generator $x\in X_1$ choose 
a word $w(x)\in X_2^*$ so that $\pi_1(x)=\pi_2(w(x))$, and let 
let $\varphi\colon X_1^*\to X_2^*$ be the corresponding 
semigroup homomorphism.
Then we imitate the construction of $L_2^H$ from $\varphi(L_1^H)$
exactly as in \cite[Theorem 2.4.1]{ECHLPT} for automatic structures,
and then, by the same argument as in \cite{ECHLPT},
$L_2^H$ is $\SACA$ or $\SSCA$ if $L_1^H$ is.
Furthermore, we have $\pi_2(L^H_2)=\pi_1(L^H_1)$. 

Now let $Y\subseteq H$ be a finite generating set for $H$, 
and $Z\subseteq G$ be any finite subset. 
Suppose that $L^H_1\subseteq X_1^*$ 
has $\lambda$--limited (resp. $\lambda$--maximal) crossover 
with respect to $(Y,Z)$,
Since both the limited and maximal crossover properties 
depend only on the image of the language in $G$ and the generating set 
of $H$, it follows that $L^H_2$ has $\lambda$--limited
(resp. $\lambda$--maximal) 
crossover with respect to $(Y,Z)$, as desired.
	
We omit the proofs of (ii) and (iii), which are straightforward adaptations
of the proof of \cite[Theorem 4.1.4]{ECHLPT}.
\end{proof}

Next we note that any coset
language containing the empty word can have all other
representatives of the same coset removed,
without altering \SACA or crossover conditions.

\begin{lemma}\label{lem:sacaxoverlanguagechange}
Suppose that the group $G=\langle X \rangle$ is \SACA with
language $L^H$ containing the empty word $\emptyword$,
with respect to a finitely generated subgroup 
$H = \langle Y \rangle$.
Suppose also that 
$Z_1,...,Z_m$ are finitely many finite subsets of $G$
such that $L^H$ has limited crossover with respect to $(Y,Z_i)$
for all $i$.  Let $\widetilde L^H := L^H \setminus S$, where $S$
is the set of nonempty words in $L^H$ that represent the identity
coset $H$ in $G$.
Then the pair $(G,H)$ is \SACA with language
$\widetilde L^H$,
and $\widetilde L^H$ has limited crossover with respect to $(Y,Z_i)$
for all $i$ as well.
\end{lemma}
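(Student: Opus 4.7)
The plan is to verify that $\widetilde L^H = L^H \setminus S$ inherits each defining property of a strong asynchronous automatic coset system (together with the crossover conditions) from $L^H$, the only substantive step being regularity. Indeed, $\widetilde L^H$ is still a coset language for $(G,H)$: the identity coset is represented by $\emptyword \in \widetilde L^H$, and for every coset $Hg$ with $g\notin H$ every $L^H$-representative lies in $\widetilde L^H$ (since none is in $S$). Both the asynchronous $K$-fellow traveller property and the $\lambda$-limited crossover conditions with respect to the pairs $(Y,Z_i)$ are universal statements over pairs of words of the language, so they descend from $L^H$ to $\widetilde L^H \subseteq L^H$ with the same constants.

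It remains to show that $S$ is a regular language over $X^\pm$, whence $\widetilde L^H = L^H \setminus S$ is regular by closure of regular languages under set difference. I would extract two consequences from the fellow traveller property applied with $v = \emptyword$. Given any $w \in L^H$ representing an element $g \in H$, set $h := g^{-1} \in H$; then $d_{\gam(G,X)}(v,hw)=0$, so the paths ${}_1 v$ (the single vertex $\{\groupid\}$) and ${}_h w$ asynchronously $K$-fellow travel. Examining prefix length $0$ gives $d_{\gam(G,X)}(\groupid,h) \leq K$, so $|g|_X \leq K$; hence the set $T := H \cap B_K(\groupid)$ of possible group elements represented by words of $S$ is finite. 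Examining general prefix lengths shows that the element represented by each prefix $w(s)$ lies in $g\cdot B_K(\groupid) \subseteq B_{2K}(\groupid)$, confining the entire path ${}_1 w$ to a bounded subgraph of $\gam(G,X)$. For each $h \in T$ I would then construct an FSA $\mathcal A_h$ whose states are the finitely many elements of $B_{2K}(\groupid)$ together with a fail state, with initial state $\groupid$, accept state $h$, and transitions from a state $g'$ on input $x \in X^\pm$ going to $g'x$ if $g'x \in B_{2K}(\groupid)$ and to the fail state otherwise. By construction $L(\mathcal A_h)$ consists of exactly those words over $X^\pm$ that represent $h$ in $G$ and whose entire path stays in $B_{2K}(\groupid)$, so $S = \bigl(L^H \cap \bigcup_{h \in T} L(\mathcal A_h)\bigr) \setminus \{\emptyword\}$ is regular.

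The one genuinely nontrivial point is the FSA construction at the end: one must notice that the fellow traveller condition forces the paths labelled by words of $S$ into a finite subgraph of the Cayley graph, at which point the required automata are essentially the labelled Cayley ball with designated accept states. Everything else in the argument reduces to routine closure properties of regular languages and the observation that all the relevant conditions are hereditary under restriction to the sublanguage $\widetilde L^H \subseteq L^H$.
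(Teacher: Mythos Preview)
Your proof is correct. The paper actually states this lemma without proof, presumably regarding it as routine, so there is no proof to compare against; your argument fills in exactly the details one would expect, with the key observation being that asynchronous $K$-fellow travel against the constant path ${}_1\emptyword$ forces every word in $S$ to trace a path confined to the ball $B_{2K}(\groupid)$, whence $S$ is recognised by a finite automaton built on that ball.
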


\section{Automatic structures for graphs of groups}
\label{sec:gog}

Our goal in this section is to prove 
Theorem~\ref{thm:gog_coset},
that free products with amalgamation,
HNN extensions, and more generally fundamental groups of graphs of groups of 
asynchronously automatic groups with well-behaved coset automatic structures,
are also asynchronously automatic; the proof is given in
Section~\ref{subsec:gog_coset_proof}.
The resulting structure is asynchronous, but under certain circumstances a strong
asynchronous coset system contains a synchronous system as a substructure,
as is proved in
Proposition~\ref{prop:synchsub}. We apply the proposition to deduce a 
synchronous closure result Theorem~\ref{thm:gog_coset_synch} for graphs of groups
with particular conditions on associated coset automatic structures.

We begin this section with definitions and notation for
graphs of groups.

\subsection{Background on graphs of groups and Higgins normal forms}\label{subsec:gogbackground}

For a directed graph $\lam$ with vertex set $V$ and directed edge
set $\dE$ (written $\lam=(V,\dE)$), 
we denote the initial and terminal
vertices of an edge $e \in \dE$ by $\iota(e)$ and $\tau(e)$ respectively.
We assume that associated with each edge $e \in \dE$, 
there is an oppositely oriented
edge $\be$ with $\iota(\be) = \tau(e)$ and $\tau(\be)=\iota(e)$.
We define $P(\lam)$ to be the set of directed paths of $\lam$, and 
where $p=e_1\cdots e_k \in P(\lam)$, we define $\iota(p)=\iota(e_1)$, $\tau(p)=\tau(e_k)$.

\begin{definition}\label{def:gog}
A \emph{graph of groups} is a quadruple 
$\GG=(\lam, \{G_v \mid v \in V\}, \{G_e \mid e \in \dE\}, 
\{\phi_e \mid e \in \dE\})$, 
where $\lam=(V,\dE)$ is a directed graph, each $G_v$ is a group,
and for each $e \in \dE$, $G_e$ is a subgroup of $G_{\tau(e)}$,
$\phi_e:G_e \rightarrow G_\be$ is an isomorphism, and $\phi_e^{-1}=\phi_\be$.

We call the subgroups $G_v$, $G_e$ the \emph{vertex} and \emph{edge} groups of $\GG$, respectively.
\end{definition}

Following standard practice,
we assume that the $G_v$ have pairwise trivial intersections.
Whenever we refer to a graph of groups $\GG=\GG(\lam)$ in this article
we will use the notation of Definition~\ref{def:gog}. In addition we will 
use the notation $X_v$, $Y_e$ for specified generating sets for $G_v$, $G_e$,
respectively.

\begin{definition}\label{def:pigog}
Let $\GG=(\lam, \{G_v \mid v \in V\}, \{G_e \mid e \in \dE\}, 
  \{\phi_e \mid e \in \dE\})$
be a graph of groups with connected graph $\lam$, 
and let $\TT$ be a maximal tree in $\lam$.
The {\em fundamental group of $\GG$ at $\TT$}, denoted 
$\pi_1(\GG)=\pi_1(\GG,\TT)$, is the group generated by the disjoint union of all of 
the groups $G_v$ and the set  of symbols
$\{s_e \mid e \in \dE\}$, subject to the relations:
\begin{mylist}
\item[(i)] $s_\be= s_e^{-1}$ for all $e \in \dE$,
\item[(ii)] $s_e = 1$ for all directed edges $e$ in $\TT$, and
\item[(iii)] $s_egs_e^{-1} = \phi_e(g)$ for all $e \in \dE$ and $g \in G_e$.
\end{mylist}
\end{definition}

When $\lam$ consists of two vertices joined by an edge,
or of a single vertex together with a loop, then the fundamental group is
a free product with amalgamation or HNN-extension, respectively.
We refer the reader to~\cite{ScottWall,Serre} for
basic facts about graphs of groups. 
In particular, up to isomorphism,
$\pi_1(\GG,\TT)$ does not depend on the choice of the
maximal tree $\TT$.

Now we provide a description of the language for $\pi_1(\GG)$
that we use in our proof of Theorem~\ref{thm:gog_coset}.
This is a set of words representing 
normal forms provided by Higgins in~\cite{Higgins},
but modified to work with right rather than left cosets,
and to provide words over generating sets rather than normal forms that are
products of elements; see~\cite[Prop.~3.3]{BHS}
for more details.

For each $v \in V$, let $X_v$ be a finite generating set
for the vertex group $G_v$; 
note that the $X_v$ are pairwise disjoint.
We consider the generating sets 
$$
\widehat X := \bigcup_{v \in V} X_v \cup \{s_e: e \in \dE\}
\hspace{.2in} \mbox{ and } \hspace{.2in}  
X := \bigcup_{v \in V} X_v \cup \{s_e: e \in \dE \setminus \dE_\TT\}
$$
for $\pi_1(\GG)$.
For any product $w \in (\widehat X^\pm)^*$,
we define its {\em deflation} $\Deflation{w}{\TT} \in (X^\pm)^*$  
to be the word derived from $w$ by
omitting from it every $s_e$ for which $e$ is in the 
set $\dE_\TT$ of directed edges of the tree $\TT$.
 
Choose a vertex $v_0 \in V$, and let $L_{v_0} \subset (X_{v_0}^\pm)^*$ 
be a language for $G_{v_0}$.
For each $e \in \dE$, let $L_{\tau(e)}^e\subset (X_{\tau(e)}^\pm)^*$ 
be a coset language for
$(G_{\tau(e)},G_e)$, and
let $\LL$ be the collection of languages 
$ \{L_{\tau(e)}^e\}$.  

Now define
$\widehat L(\GG,\LL,L_{v_0},v_0,\TT) \subset (\widehat X^\pm)^*$ 
to be the set of all words of the form:
\[ 
w_0s_{e_1}u_1\cdots s_{e_k}u_k,
\]
where
\begin{mylist}
\item[(i)]
$p=e_1\cdots e_k \in P(\lam)$ with $\iota(p)=v_0$;
\item[(ii)]
$w_0 \in L_{v_0}$ and $u_i \in L_{\tau(e_i)}^{e_i}$ for $1 \le i \le k$;
\item[(iii)]
if $e_{i+1}=\be_i$, then $u_i$ does not represent an element of $G_{e_i}$;
\item[(iv)]
if $k > 0$ and $e_k \in \dE_\TT$,
then $u_k$ does not represent an element of $G_{e_k}$.
\end{mylist}
Then every element of $\pi_1(\GG)$ has at least one representative of this form.
We refer to this set as the {\em  inflated Higgins language}
for the group $\pi_1(\GG,\TT)$
with respect to the triple $(\LL,v_0,L_{v_0})$.
The language 
$$
L(\GG,\LL,L_{v_0},v_0,\TT) := 
\{\Deflation{w}{\TT} \mid w \in \widehat L(\GG,\LL,L_{v_0},v_0,\TT)\}
$$ over $X$ is the associated {\em Higgins language} for $\pi_1(\GG)$.

Next suppose that $e_0$ is any directed edge in $\lam$.
We define $\widehat L(\GG,\LL,e_0,\TT)$ to be the of all words
over $\widehat X$ of the from
of the form 
\[ 
u_0s_{e_1}u_1\cdots s_{e_k}u_k 
\]
where $u_0 \in  L_{\tau(e_0)}^{e_0}$ and the conditions (i)--(iv) above hold
with $v_0=\tau(e_0)$.
Then each coset of $G_{e_0}$ in $\pi_1(\GG)$ has at least one representative
in this language.
we call this the {\em inflated Higgins coset language}
for the pair $(\pi_1(\GG,\TT),G_{e_0})$ with respect to $(\LL,e_0)$.
Similarly, the language 
$$
L(\GG,\LL,e_0,\TT) := 
\{\Deflation{w}{\TT} \mid w \in \widehat L(\GG,\LL,e_0,\TT)\}
$$ over $X$
is the associated {\em Higgins coset language} for $(\pi_1(\GG),G_{e_0})$.

\begin{remark}\label{rmk:higginsnormalform}
We note that in the case when $L_{v_0}$ and $L^e_{\tau(e)}$
are languages
with unique representatives of $G_{v_0}$ and of cosets of $G_e$
in $G_{\tau(e)}$, respectively,
then the corresponding Higgins languages are sets
with unique representatives of $\pi_1(\GG)$
and cosets of $G_{e_0}$ in $\pi_1(\GG)$, respectively.
\end{remark}

\subsection{Strong asynchronous automatic coset systems for graphs of groups}
\label{subsec:gog_coset}

This section is devoted to the
statement of our main graph of groups result
Theorem~\ref{thm:gog_coset}, together with
Corollaries~\ref{cor:amalgprod_coset} and \ref{cor:hnn_coset},
for amalgamated products and HNN extensions,
which are special cases of this.
We defer the proof of the theorem to the following section,
Section~\ref{subsec:gog_coset_proof}.
We conclude this section with a variant of the result for amalgamated free products, 
in the case that one group has maximal crossover.

\begin{theoremA}
\label{thm:gog_coset}
Let $\GG = (\lam=(V,\dE), \{ G_v: v \in V \}, \{ G_e: e \in \dE \},
       \{\phi_e \mid e \in \dE\} )$
be a graph of groups over a finite connected directed graph 
$\lam$ with an edge $e_0$.
Let $X_v$ and $Y_e$ be finite generating sets of the groups
$G_v$ and $G_e$, respectively.
Suppose that the
following conditions hold for each $e \in \dE$.
\begin{mylist}
	\item[(i)]  The pair $(G_{\tau(e)},G_e)$ is strongly asynchronously 
	coset automatic with coset language 
	$L_{\tau(e)}^e \subseteq (X_{\tau(e)}^{\pm})^*$
	containing the empty word $\emptyword$.
	\item[(ii)] 
	The triple $(G_e,G_\be,\phi_e)$ is stable 
            with respect to $(Y_e,Y_\be)$.
	\item[(iii)] For each $f \in \dE$ with $\tau(e)=\tau(f)$, 
	the coset language  
	$L_{\tau(e)}^e$ has limited crossover with respect to $(Y_e,Y_f)$.
\end{mylist}
Then the pair $(\pi_1(\GG),G_{e_0})$ is strongly asynchronously coset automatic. 
\end{theoremA}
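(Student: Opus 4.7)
The plan is to take as candidate coset language the Higgins coset language $L := L(\GG,\LL,e_0,\TT)$ of Section~\ref{subsec:gogbackground}, where $\TT$ is any maximal tree in $\lam$ and $\LL = \{L_{\tau(e)}^e : e \in \dE\}$. Regularity follows from hypothesis~(i) together with standard closure properties: the Higgins restrictions~(iii) and~(iv) (no immediate backtracking, tree-edge non-triviality) amount to checking whether the current coset factor $u_i$ represents an element of $G_{e_i}$, which is a regular condition since it can be detected in finite state by the FSA for $L_{\tau(e_i)}^{e_i}$. Since $\lam$ is finite, $L$ is a finite union of regular concatenations. That $L$ contains a representative of every right coset of $G_{e_0}$ in $\pi_1(\GG)$ is Higgins' normal form theorem. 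The remaining task is the asynchronous fellow traveller property.

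I would proceed by induction on the total depth $k + k'$, where $v = u_0 s_{e_1} u_1 \cdots s_{e_k} u_k$ and $w = u_0' s_{e_1'} u_1' \cdots s_{e_{k'}'} u_{k'}'$ are elements of $L$ and $h \in G_{e_0}$, $x \in X^\pm \cup \{\emptyword\}$ satisfy $hw = vx$ in $\pi_1(\GG)$. The base case $k = k' = 0$ is immediate: both $v$ and $w$ lie in $L_{\tau(e_0)}^{e_0}$, so hypothesis~(i) for $(G_{\tau(e_0)}, G_{e_0})$ delivers the fellow traveller constant directly. For the inductive step, a uniqueness statement for Higgins coset forms (from the Bass-Serre tree interpretation: the edge sequence records the geodesic in the Bass-Serre tree from the base edge $e_0$ to the destination edge, and this geodesic can shift by at most one step when the endpoint is perturbed by a single Cayley generator) forces the edge paths $(e_1, \ldots, e_k)$ and $(e_1', \ldots, e_{k'}')$ to agree except possibly in the final position. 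In particular $e_1 = e_1'$, and $u_0, u_0'$ represent related cosets of $G_{e_1}$ in $G_{\tau(e_0)}$.

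Given this structural matching, I would proceed as follows at the first junction. The \SACA fellow travelling from hypothesis~(i), combined with the limited crossover from hypothesis~(iii) applied at the vertex $\tau(e_0)$ (to control the transition between the edge subgroups $G_{e_0}$ and $G_{e_1}$), produces a uniformly bounded subgroup element $\delta \in G_{\overline{e_1}} \leq G_{\tau(e_0)}$ relating the two paths at the junction immediately after $u_0$ and $u_0'$. Stability~(ii) for $\phi_{e_1}$ then converts $\delta$ into a bounded element $\phi_{e_1}^{-1}(\delta) \in G_{e_1} \leq G_{\tau(e_1)}$, which plays the role of the ``new $h$'' for the inner Higgins pair $(u_1 s_{e_2} \cdots u_k,\, u_1' s_{e_2'} \cdots u_{k'}')$. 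The inductive hypothesis, with $e_1$ in place of $e_0$, then completes the argument.

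The principal obstacle is maintaining a uniform fellow traveller constant through the recursion: a naive composition of the constants arising from~(i), (ii), (iii) at each of the $k$ junctions would blow up with $k$, which would not amount to a fellow traveller property at all. The resolution uses the finiteness of $\lam$: there are only finitely many admissible junction types $(e,f)$ with $\tau(e)=\tau(f)$ and only finitely many edge isomorphisms $\phi_e$. One therefore chooses once and for all a single constant $K$ that simultaneously serves every \SACA fellow traveller constant from~(i) and, via Lemmas~\ref{lem:multiply_crossover} and~\ref{lem:multiply_stability}, all crossover and stability parameters from~(iii) and~(ii). At each junction a bounded subgroup element is replaced (not amplified) by another of $Y$-length at most $K$. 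Asynchrony is essential: it lets the two paths wait at the $s_{e_i}$ letters while the subgroup element propagates, and, crucially, while successive coset factors $u_i, u_i'$ of differing $X_{\tau(e_i)}$-lengths absorb the subgroup corrections.
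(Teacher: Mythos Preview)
Your candidate language, your treatment of regularity, and your identification of the central difficulty (maintaining a uniform constant across all junctions, resolved by fixing a single $\lambda$ via Lemmas~\ref{lem:multiply_crossover} and~\ref{lem:multiply_stability}) all agree with the paper. But the direction of your induction is backwards, and this is a genuine gap, not a cosmetic difference.

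At the first junction you have $u_0, u_0' \in L_{\tau(e_0)}^{e_0}$ related by $u_0 \delta =_{G_{\tau(e_0)}} h u_0'$ with $\delta \in G_{\overline{e_1}}$ and $h \in G_{e_0}$. You assert that hypothesis~(i) together with crossover~(iii) bounds $\delta$. Neither does. The \SACA condition for $(G_{\tau(e_0)},G_{e_0})$ applies only when the endpoints of $u_0$ and $hu_0'$ differ by at most~$1$ in $\gam(G_{\tau(e_0)})$; here they differ by the a~priori unbounded element~$\delta$. And limited crossover for $L_{\tau(e_0)}^{e_0}$ with respect to $(Y_{e_0}, Y_{\overline{e_1}})$ says exactly that a bound on $|\delta|_{Y_{\overline{e_1}}}$ yields a bound on $|h|_{Y_{e_0}}$, not the other way round. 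No hypothesis propagates bounds from left to right; indeed, bounding $|h|$ is part of what you are required to prove.

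The paper runs the argument right to left. At the rightmost junction one has $u_k x =_{G_{\tau(e_k)}} h_k u_k'$ with $|x| \le 1$, so \SACA for $(G_{\tau(e_k)}, G_{e_k})$ bounds $|h_k|_{X_{\tau(e_k)}}$ and hence $|h_k|_{Y_{e_k}} \le \lambda$. Stability~(ii) carries this across $s_{e_k}$ to bound $|\phi_{e_k}(h_k)|_{Y_{\overline{e_k}}} \le \lambda$; crossover~(iii) at $\tau(e_{k-1})$ then bounds $|h_{k-1}|_{Y_{e_{k-1}}} \le \lambda$; and one continues back to $h_0 = h$. Once every intermediate $h_i$ is bounded, the \SACA fellow travelling for each $(G_{\tau(e_i)}, G_{e_i})$ stitches the segments together with a uniform constant. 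The paper organises this via the cascade process (Definition~\ref{def:cascade} and Lemma~\ref{lem:higginspath}) and a case split on the type of $x$, rather than by induction, but the essential mechanism is propagation from the bounded perturbation $x$ at the right end back to $h$ at the left.
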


In the special cases of amalgamated products and HNN extensions,
we immediately obtain the following results as corollaries of the
above result together with
Theorem~\ref{thm:coset2auto}.

\begin{corollary}[{\bf Amalgamated products}]
\label{cor:amalgprod_coset}
Let $G_1$ and $G_2$ be groups with a common subgroup
$H = G_1 \cap G_2$, and suppose that $G_1$, $G_2$ and $H$ are all
finitely generated.
Suppose that the pairs $(G_1,H)$ and $(G_2,H)$ are both 
strongly asynchronously coset automatic and that, 
for some finite generating set $Y$ of $H$,
each of the associated coset languages has
limited crossover with respect to $(Y,Y)$.
Then $(G_1 \ast_H G_2,H)$ has a strong asynchronous automatic
coset system.  Moreover, if the group $H$ is asynchronously
automatic, then so is the amalgamated product $G_1 \ast_H G_2$.
\end{corollary}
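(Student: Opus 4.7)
The plan is to view the amalgamated product $G_1 *_H G_2$ as the fundamental group of a two-vertex graph of groups and then invoke Theorem~\ref{thm:gog_coset}, followed by Theorem~\ref{thm:coset2auto}. Concretely, I take $\lam = (V,\dE)$ with $V = \{v_1,v_2\}$ and a single pair $\{e,\be\}$ of oriented edges satisfying $\iota(e) = v_1$ and $\tau(e) = v_2$; set $G_{v_i} := G_i$, $G_e := H \subseteq G_2$, $G_{\be} := H \subseteq G_1$, let $\phi_e \colon G_e \to G_{\be}$ be the identity map on $H$, and choose $\TT := \lam$ itself as the maximal tree. Definition~\ref{def:pigog} then identifies $\pi_1(\GG,\TT)$ with $G_1 *_H G_2$ in a way that carries $G_e$ onto the amalgamated subgroup $H$.

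With this setup in place, I would verify each of the hypotheses of Theorem~\ref{thm:gog_coset} in turn. Hypothesis (i) reduces to the assumed \SACA property of $(G_1,H)$ and $(G_2,H)$, after a preliminary adjustment of the coset languages $L_1^H, L_2^H$ to ensure that each contains $\emptyword$. Hypothesis (ii) holds trivially with $\mu = 1$ since $\phi_e = \mathrm{id}_H$ makes $|\phi_e(h)|_Y = |h|_Y$ for every $h \in H$. Hypothesis (iii) demands limited crossover of $L^e_{\tau(e)}$ with respect to $(Y_e, Y_f)$ for every $f \in \dE$ with $\tau(f) = \tau(e)$; since $\lam$ has exactly one edge terminating at each vertex, this collapses to the assumed limited crossover of $L_1^H$ and $L_2^H$ with respect to $(Y,Y)$.

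Applying Theorem~\ref{thm:gog_coset} with $e_0 := e$ then delivers the \SACA conclusion for $(\pi_1(\GG), G_e) \cong (G_1 *_H G_2, H)$. For the last sentence of the corollary, assuming in addition that $H$ is asynchronously automatic, I would feed this coset automaticity together with an asynchronous automatic structure on $H$ into Theorem~\ref{thm:coset2auto}(i) to obtain an asynchronous automatic structure on $G_1 *_H G_2$ over $X_1 \cup X_2 \cup Y$, completing the proof.

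The only genuine obstacle is the normalization step needed for hypothesis (i), namely arranging that $\emptyword \in L_i^H$. Any coset language necessarily contains some representative of the identity coset $H$, and the plan is to replace $L_i^H$ by a modified regular language in which that representative is swapped out for $\emptyword$; once $\emptyword$ has been inserted, Lemma~\ref{lem:sacaxoverlanguagechange} removes any other identity-coset representatives while preserving both \SACA and all relevant crossover constants. With this bookkeeping handled, the remainder of the argument is a routine verification that the single-edge setup falls within the scope of the ambient theorem.
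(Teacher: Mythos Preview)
Your proposal is correct and follows exactly the approach the paper takes: the corollary is stated immediately after Theorem~\ref{thm:gog_coset} as a direct specialization to the two-vertex, single-edge graph of groups, together with an application of Theorem~\ref{thm:coset2auto} for the final automaticity statement. You have in fact been more careful than the paper itself in flagging the need to arrange $\emptyword \in L_i^H$ before invoking hypothesis~(i); the paper simply declares the corollary ``immediate'' without commenting on this normalization.
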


\begin{corollary}[{\bf HNN extensions}]
\label{cor:hnn_coset}
Let $G$, $H_1$, $H_2$ be finitely generated with $H_1, H_2\le G$ and 
$\phi\colon H_1\to H_2$ an isomorphism. Further, let
$H_i=\gspan{\Yi}$ for $|Y_i|<\infty$, for $i=1,2$. 
Suppose that:
\begin{mylist}
\item[(i)] the pairs $(G, H_j)$ are strongly asynchronously coset automatic with
coset language $L^{H_j}$ for $j=1,2$;
\item[(ii)] $L^{H_j}$ has 
limited crossover with respect to
$(Y_i,Y_j)$ for each of $i,j$ in $\{1,2\}$; and
\item[(iii)] the triples $(H_1, H_2, \phi)$ %
and $(H_2,H_1,\phi^{-1})$ 
are stable.
\end{mylist}
Then $(G\ast_\phi, H_i)$ is strongly asynchronously coset automatic for
$i=1, 2$.
Moreover, if the (isomorphic) groups $H_i$ are asynchronously
automatic, then so is the HNN extension $G_1 \ast_\phi$.
\end{corollary}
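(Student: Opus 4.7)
The plan is to realize $G \ast_\phi$ as the fundamental group of a graph of groups and then apply Theorem~\ref{thm:gog_coset} directly. I take $\lam$ to be the directed graph with a single vertex $v$ together with one geometric loop at $v$, producing two directed edges $e_0$ and $\overline{e_0}$ (each with initial and terminal vertex $v$), and choose the maximal tree $\TT = \{v\}$. Assign the vertex group $G_v = G$, the edge groups $G_{e_0} = H_1$ and $G_{\overline{e_0}} = H_2$, and the edge isomorphism $\phi_{e_0} = \phi$ (so that $\phi_{\overline{e_0}} = \phi^{-1}$). Under these choices, Definition~\ref{def:pigog} produces $\pi_1(\GG, \TT) = G \ast_\phi$ with $s_{e_0}$ as the stable letter and the relation $s_{e_0} h s_{e_0}^{-1} = \phi(h)$ for $h \in H_1$.

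Next I verify the three hypotheses of Theorem~\ref{thm:gog_coset}. For hypothesis~(i), the pair $(G_{\tau(e)}, G_e)$ is one of $(G, H_1)$ or $(G, H_2)$, both of which are \SACA by hypothesis~(i) of the corollary; I arrange $\emptyword \in L^{H_i}$ by the standard technical device of first trimming $L^{H_i}$ to a sublanguage with unique coset representatives (in the spirit of \cite[Thm.~2.5.1]{ECHLPT}) and then replacing the chosen representative of the trivial coset $H_i$ by $\emptyword$, a modification which preserves regularity, the \SACA fellow-traveller property, and the limited-crossover property. For hypothesis~(ii), the triple $(G_e, G_{\overline{e}}, \phi_e)$ is $(H_1, H_2, \phi)$ when $e = e_0$ and $(H_2, H_1, \phi^{-1})$ when $e = \overline{e_0}$; both are stable by hypothesis~(iii) of the corollary. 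For hypothesis~(iii), since $\tau(e) = v$ for every edge, I need each $L^{H_j}$ to have limited crossover with respect to $(Y_j, Y_i)$ for every $i, j \in \{1,2\}$, which is exactly hypothesis~(ii) of the corollary, read in the notational order of Definition~\ref{def:crossover}.

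Applying Theorem~\ref{thm:gog_coset} with $e_0$ as the distinguished edge yields that $(G \ast_\phi, H_1)$ is strongly asynchronously coset automatic, and the symmetric application with $\overline{e_0}$ playing the distinguished role yields the same conclusion for $(G \ast_\phi, H_2)$. The ``moreover'' clause then follows immediately from Theorem~\ref{thm:coset2auto}(i) by combining the just-constructed \SACA coset system for $(G \ast_\phi, H_i)$ with the given asynchronous automatic structure on $H_i$.

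The work here is essentially translation and bookkeeping: matching the symmetric conditions (ii) and (iii) of the corollary to the per-edge hypotheses of Theorem~\ref{thm:gog_coset}, together with the empty-word adjustment needed to make condition~(i) of the theorem apply verbatim. No substantive obstacle remains once the graph-of-groups picture is set up correctly.
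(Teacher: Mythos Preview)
Your proof is correct and follows exactly the approach the paper intends: it presents the corollary as an immediate consequence of Theorem~\ref{thm:gog_coset} (via the single-vertex, single-loop graph of groups) together with Theorem~\ref{thm:coset2auto}, and you have carefully matched the hypotheses. One small remark: the empty-word adjustment can be done more simply by just adjoining $\emptyword$ to $L^{H_i}$ (a finite modification preserving regularity, with the fellow-traveller and crossover constants increased by a bounded amount depending on any fixed representative of the trivial coset), so the detour through unique representatives is unnecessary---but your version is not wrong.
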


In the presence of maximal crossover, the following variation of
Theorem~\ref{thm:gog_coset} holds for amalgamated free products.
The proof is analogous to the proof of
Theorem~\ref{thm:gog_coset} in Section~\ref{subsec:gog_coset_proof},
although maximal crossover
allows us to simplify the argument somewhat.

\begin{theorem}\label{thm:amalgprod_coset2}
    Let $H \le G_1 \cap G_2$ be  a finitely
    generated group within the intersection of groups
    $G_1=\langle X_1\rangle$, $G_2 = \langle X_2 \rangle$,
    Suppose that 
    $(G_1,H)$ and $(G_2,H)$ are both strongly asynchronously coset automatic,
    and that the language for $(G_1,H)$ has maximal crossover.
    Then $(G_1 \ast_H G_2,H)$ is strongly asynchronously coset automatic.
\end{theorem}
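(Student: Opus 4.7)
The plan is to imitate the proof of Theorem~\ref{thm:gog_coset} for the special case of the two-vertex graph of groups that encodes $G_1 \ast_H G_2$ (with $H$ serving as both edge groups and $\phi_e = \mathrm{id}$, so the stability condition of that theorem is trivially satisfied), using the maximal crossover of $L_1^H$ (Definition~\ref{def:maxxover}) to compensate for the absence of any crossover hypothesis on $(G_2, H)$.  For the candidate coset language I would take the Higgins coset language $L^H = L(\GG, \{L_1^H, L_2^H\}, e_0, \TT)$, where $\TT$ is the single undirected edge and $e_0$ is chosen so that the first piece lies in $L_1^H$.  Its elements are alternating concatenations $u_0 u_1 \cdots u_n$ whose successive pieces draw from $L_1^H$ and $L_2^H$ and satisfy the Higgins non-triviality conditions; if needed, Lemma~\ref{lem:sacaxoverlanguagechange} lets us strip redundant $H$-representatives from $L_1^H$.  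Regularity of $L^H$ is standard, and Higgins' normal form theorem ensures that $L^H$ contains a representative of every right coset of $H$.

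The substance of the proof is the asynchronous fellow traveller verification.  Given $u, v \in L^H$ and $h \in H$ with $d_{\Gamma(G)}(u, hv) \le 1$, the amalgamated-product normal form theorem produces $h = h_0, h_1, \ldots, h_{n+1} \in H$ (with the single-generator discrepancy absorbed into the endmost piece) such that $u_0 u_1 \cdots u_i = h v_0 v_1 \cdots v_i \cdot h_{i+1}^{-1}$ at every index $i$.  At every $L_1^H$-piece, the maximal crossover hypothesis applied with $u = u_i \notin H$, $g = h_{i+1}$, and $v = v_i$ gives $|h_i|_{Y_H} \le \lambda$ independently of $h_{i+1}$; this is the step that replaces the limited crossover used on both vertices in the general theorem.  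At every $L_2^H$-piece, the input $h_{i+1}$ originates from an adjacent $L_1^H$ step and is therefore already bounded in $Y_H$ and hence in $X_2$, so the in-piece fellow travelling and a bound on $h_i$ follow from iterating the SACA property of $L_2^H$ along a short path from $u_i$ to $h_i v_i$ in $\Gamma(G_2, X_2)$ through $L_2^H$-representatives.  Chaining the piecewise fellow travellings produces the uniform asynchronous fellow traveller constant for $L^H$.

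The principal obstacle is the bookkeeping showing that the alternation of maximal crossover (at $L_1^H$-pieces) and input-bounded SACA iteration (at $L_2^H$-pieces) does keep every $|h_i|$ and the within-piece fellow traveller distances uniformly bounded in a single generating set of $G_1 \ast_H G_2$; this is precisely the simplification that maximal crossover permits, since it breaks the chain of inductive crossover dependencies that in Theorem~\ref{thm:gog_coset} had to be sustained at every vertex by limited crossover.  A short boundary case analysis is required to handle the trailing single generator $x \in X_1^\pm \cup X_2^\pm \cup \{\emptyword\}$ and the possibility that multiplication by $x$ either extends the Higgins normal form by one piece or collapses the endmost piece into $H$, mirroring the analogous boundary handling in the proof of the general theorem.
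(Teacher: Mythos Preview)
Your proposal is correct and matches the paper's approach: the paper itself supplies only the one-line remark that the proof is analogous to that of Theorem~\ref{thm:gog_coset}, with maximal crossover permitting a simplification, and your outline is exactly such an adaptation.

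One small bookkeeping point to watch: at an $L_2^H$-piece, iterating the \SACA property of $(G_2,H)$ bounds the output $h_i$ only over $X_2$, whereas the fellow-travelling iteration at the adjacent $L_1^H$-piece needs $|h_{i+1}|_{X_1}$ bounded.  This gap is bridged by the elementary observation that only finitely many $h\in H$ satisfy $|h|_{X_2}\le C$, so any $X_2$-bound on an element of $H$ yields a bound over $Y$ and hence over $X_1$ (this is the analogue of condition~(d) in the proof of Theorem~\ref{thm:gog_coset}).  You also need to note that Lemma~\ref{lem:sacaxoverlanguagechange} preserves maximal crossover, not merely limited crossover, when stripping the nontrivial $H$-representatives from $L_1^H$; this is immediate since maximal crossover quantifies only over $u\notin H$.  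With these details recorded, your argument goes through exactly as sketched.
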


One situation in which we could apply this result is when
$G_1$ is hyperbolic relative to a collection of virtually abelian groups
with $H$ a peripheral subgroup (cf Proposition~\ref{prop:relhyp}) and
when $H$ is an arbitrary subgroup of the virtually abelian group $G_2$
(cf Proposition~\ref{prop:vab}).

\subsection{Proving Theorem~\ref{thm:gog_coset}}
\label{subsec:gog_coset_proof}

In order to prove Theorem~\ref{thm:gog_coset}, we need to 
define a procedure that we call {\em cascading}, that will
convert a given word
of a particular form over $\widehat X^\pm$
into another word representing 
the same group element, which (as we shall show in 
Lemma~\ref{lem:higginspath}) is in the
inflated Higgins language.

\begin{definition}\label{def:cascade}
Let $\GG=(\lam=(V,\dE),\{G_v=\langle X_v\rangle\},\{G_e\},\{\phi_e\})$ 
be a graph of groups,
and let $v_0 \in V$.  

Let $L_{v_0} \subset (X_v^\pm)^*$ be a language for $G_{v_0}$
and let $\LL=\{L_{\tau(e)}^e: e \in \dE\}$ be a set of coset languages for the pairs
$(G_{\tau(e)},G_e)$, for which each
$L_{\tau(e)}^e$ is a language over $X_{\tau(e)}$, and
for which
the only representative in each
$L_{\tau(e)}^e$ of the identity coset $G_e$ is the empty word.

Let $w =w_0s_{e_1}w_1\cdots s_{e_k}w_k \in (\widehat X^\pm)^*$,
where $p=e_1 \cdots e_k \in P(\lam)$ with $\iota(p)=v_0$, 
$w_0 \in (X_{v_0}^\pm)^*$,
and $w_i \in (X_{\tau(e_i)}^\pm)^*$ for $i=1,\ldots,k$.

An {\em $(\LL,L_{v_0})$--cascade} of $w$ is a word 
$u \in (\widehat X^\pm)^*$  
satisfying $u =_{\pi_1(\GG)} w$
that is obtained as follows.  
\begin{mylist}
\item[(i)]
Select $u_k \in L_{\tau(e_k)}^{e_k}$ 
with $w_k =_{G_{\tau(e_k)}} h_k u_k$ for some $h_k \in G_{e_k}$.
\item[(ii)] For $j=k-1,\ldots,1$, 
select $u_j \in L_{\tau(e_j}^{e_j}$, 
with
$w_j\phi_{e_{j+1}}(h_{j+1}) =_{G_{\tau(e_j)}} h_j u_j$ 
for some $h_j \in G_{e_j}$. 
\item[(iii)]
Select $u_0 \in L_{v_0}$
representing the element $w_0\phi_{e_1}(h_1)$ in $G_{v_0}$.
\item[(iv)]
Remove from $u_0s_{e_1}u_1\cdots s_{e_k}u_k$
the maximal suffix of the form $s_{e_j}s_{e_{j+1}} \cdots s_{e_k}$
for which $e_i \in \dE_\TT$ for all $j \le i \le k$, to obtain $u$.
\end{mylist}
\end{definition}

The proof of the following lemma 
is basically the proof in~\cite[Proposition 3.4]{BHS}.

\begin{lemma}\label{lem:higginspath}
Let $\GG$ be a graph of groups over a finite connected graph $\lam=(V,\dE)$,
and assume the notation of Section~\ref{subsec:gogbackground}.
Let $\widehat L =\widehat L(\GG,\LL,L_{v_0},v_0,\TT)$ be an inflated Higgins language 
over $\widehat X = \cup_{v \in V} X_v \cup \{s_e: e \in \dE\}$ 
for which 
the only representative in each
$L_{\tau(e)}^e$ of the identity coset $G_e$ is the empty word.
Let  $w=w_0s_{e_1}w_1\cdots s_{e_k}w_k$
be a word over $\widehat X$ as in Definition~\ref{def:cascade},
and suppose that for all $1 \le i \le k-1$ either $e_{i+1} \neq \be_i$ or $w_i$
does not represent an element of $G_{e_i}$.
\begin{mylist}
\item[(i)] If $u$ is an $(\LL,L_{v_0})$--cascade of $w$, 
then $u \in \widehat L$.  
\item[(ii)]  Suppose that if both $k>0$ and  $e_k \in \dE_\TT$ 
then $w_k$ does not represent
an element of $G_{e_k}$.  
Then any $w' \in \widehat L$ with $w' =_{\pi_1(\GG)} w$
is an $(\LL,L_{v_0})$--cascade of $w$ 
of the form
$w'=w_0's_{e_1}w_1' \cdots s_{e_k}w_k'$; that is, 
the paths in $\lam$ associated with $w$ and $w'$ are the same, and
there exist elements $h_i \in G_{e_i}$ for $1 \le i \le k$ such that
$w_k =_{G_{\tau(e_k)}}  h_kw_k'$,
and for $k> i \geq 1$,
$w_i\phi_{e_{i+1}}(h_{i+1}) =_{G_{\tau(e_i)}} h_i w'_i$,
and $w_0\phi_{e_1}(h_1) =_{G_{v_0}} w_0'$.
\end{mylist}
\end{lemma}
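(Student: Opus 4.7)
The plan is to handle the two parts separately, both relying on Higgins' normal form theorem for fundamental groups of graphs of groups (the natural generalization of Britton's lemma to graphs of groups).

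For part (i), I first verify that the cascade $u$ represents the same element of $\pi_1(\GG)$ as $w$. Starting from the rightmost factor, I use $w_k =_{G_{\tau(e_k)}} h_k u_k$ to rewrite $s_{e_k} w_k$ as $s_{e_k} h_k u_k$, then apply the defining relation $s_e g = \phi_e(g) s_e$ (valid for $g \in G_e$) to push $\phi_{e_k}(h_k) \in G_{\be_k} \le G_{\tau(e_{k-1})}$ leftward, where it merges with $w_{k-1}$. Iterating produces $u_0 s_{e_1} u_1 \cdots s_{e_k} u_k$, and step (iv) removes a suffix of $s_{e_j}$'s for $e_j \in \dE_\TT$, each of which is trivial in $\pi_1(\GG)$. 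I then verify the four conditions defining $\widehat L$. Conditions (i) and (ii) hold by construction. For (iii), if $e_{i+1} = \be_i$ and $u_i$ represented an element of $G_{e_i}$, then the empty-word hypothesis on $L_{\tau(e_i)}^{e_i}$ would force $u_i = \emptyword$; combined with $\phi_{e_{i+1}}(h_{i+1}) \in G_{e_i}$ (since $\phi_{\be_i} = \phi_{e_i}^{-1}$), the cascade equation would yield $w_i \in G_{e_i}$, contradicting the standing reducedness hypothesis on $w$. Condition (iv) is the whole purpose of step (iv) of the cascade: after stripping the maximal trailing block of tree-edge factors (the relevant $u_j$'s being forced empty by the empty-word hypothesis), the new terminal piece either sits on a non-tree edge or is nonempty, and hence does not represent an element of its edge group.

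For part (ii), assume $w' \in \widehat L$ with $w' =_{\pi_1(\GG)} w$. By part (i), the cascade of $w$ yields a word $u \in \widehat L$ with $u =_{\pi_1(\GG)} w'$. Higgins' normal form theorem asserts that two words in the inflated Higgins language representing the same element of $\pi_1(\GG)$ must traverse the same directed edge sequence in $\lam$ and differ only by sliding edge-group elements through the corresponding $s_{e_i}$'s via the relation $s_e g s_e^{-1} = \phi_e(g)$. Applied to $w'$ and $u$, this shows $w'$ has the form $w_0' s_{e_1} w_1' \cdots s_{e_k} w_k'$ on the same path as $w$; the hypothesis that $w_k$ does not represent an element of $G_{e_k}$ when $e_k \in \dE_\TT$ ensures that no tree-edge suffix of the path of $w$ can be absorbed, so the combinatorial paths of $w$ and $w'$ genuinely match edge for edge. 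Reading the resulting slide equations from right to left then extracts the $h_i \in G_{e_i}$ required by the cascade equations.

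The main obstacle is the careful interplay between the empty-word hypothesis on the coset languages and the edge-group relations in verifying condition (iii) of part (i), together with the precise invocation of Higgins' normal form theorem in part (ii) to identify both the path and the sliding factors. Neither step is deep, and the proof closely parallels that of [BHS, Proposition 3.4].
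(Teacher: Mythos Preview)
Your proof is correct. Part~(i) follows the same line as the paper: verify conditions (i)--(iv) of the inflated Higgins language using the cascade equations and the empty-word hypothesis on the coset languages.

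For part~(ii) your route differs slightly from the paper's. You invoke Higgins' normal form theorem directly in its general form: two reduced words in $\pi_1(\GG)$ representing the same element share the same edge sequence and differ only by edge-group slides. The paper instead takes a more self-contained detour: it chooses auxiliary sublanguages $\widetilde L_{v_0} \subseteq L_{v_0}$ and $\widetilde L^e \subseteq L_{\tau(e)}^e$ of \emph{unique} representatives, forms $(\{\widetilde L^e\},\widetilde L_{v_0})$--cascades $\widetilde w$ and $\widetilde w'$ of $w$ and $w'$, and then appeals only to the uniqueness statement (Remark~\ref{rmk:higginsnormalform}) to force $\widetilde w = \widetilde w'$ as words; the required $h_i$ are then read off explicitly as $h_i := \tilde h_i (h_i')^{-1}$ from the two cascade sequences. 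Your approach is more direct but presupposes the full structure theorem; the paper's buys a more elementary argument at the cost of an extra layer of notation, and also makes the formula for the $h_i$ completely explicit rather than leaving them to be ``read off'' the slide relations.
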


\begin{proof}
An $(\LL,L_{v_0})$--cascade word $u$ of $w$ is obtained by removing 
a suffix $s_{e_j}s_{e_{j+1}} \cdots s_{e_k}$ of letters
corresponding to edges in the tree $\TT$
from a word of the form
$u_0s_{e_1}u_1\cdots s_{e_k}u_k$.
For each index $\ell$, we have
$w_{\ell} =_{G_{\tau(e_{\ell})}}
 h_{\ell} u_{\ell}\phi_{e_{\ell+1}}(h_{\ell+1})^{-1}$ and,
if  $e_{\ell+1} = \be_\ell$, then
$h_{\ell},\phi_{e_{\ell+1}}(h_{\ell+1}) \in G_{e_{\ell}}$ and
$w_{\ell}$ does not represent an element of $G_{e_\ell}$,
so the word $u_\ell$ also cannot represent an element of $G_{e_\ell}$.
Moreover, since the only representative of the identity coset
in each of the coset languages is the empty word, then 
after removing the maximal suffix of letters associated with edges
in $\TT$ from the word $u_0s_{e_1}u_1\cdots s_{e_k}u_k$,
either the resulting word $u$
is in $L_{v_0}$, or $u$ ends with a letter $s_{e_{j-1}}$ 
with $e_{j-1} \notin \dE_\TT$, or $u$ ends with a word $s_{e_{j-1}}u_{j-1}$ 
in which $u_{j-1}$ does not represent the identity coset.
Hence $u$ is in the inflated Higgins set $\widehat L$.

Suppose further that the additional hypothesis of (ii) holds.
If $k>0$ and $e_k \in \dE_\TT$ then,
since $w_k =_{G_{\tau(e_k)}} h_k u_k$ with $h_k \in G_{e_k}$,
and $w_k$ does not represent an element of $G_{e_k}$, 
again we see that the word $u_k$ cannot represent an element of $G_{e_k}$.
Hence in any case no suffix of letters is removed in the last step of 
the cascade procedure,
and the $(\LL,L_{v_0})$--cascade $u$ of $w$ has the form
$u=u_0s_{e_1}u_1\cdots s_{e_k}u_k$ with the same associated path
in $\lam$ as $w$.

Now let $\widetilde L_{v_0} \subseteq (X_{v_0}^\pm)^*$ 
be a set of unique representatives for the elements
of $G_{v_0}$ containing the empty word $\emptyword$, and
for each $e \in \dE$ let $\widetilde L^e \subseteq L_{\tau(e)}^e$ 
be a set of unique
representatives of the right cosets of $G_e$ in $G_{\tau(e)}$,
containing $\emptyword$.
Let $\widetilde L := \widehat L(\GG,\{\widetilde L^e\},\widetilde L_{v_0},v_0,\TT)$
be the associated inflated Higgins language.
By Remark~\ref{rmk:higginsnormalform}, each element of $\pi_1(\GG)$
is represented by a unique element of $\widetilde L$.

Let $\widetilde w$ and $\widetilde w'$ be 
$(\{\widetilde L^e\},\widetilde L_{v_0})$--cascades of
$w$ and $w'$, respectively.  
The words $w$ and $w'$ both satisfy the hypotheses in (ii), and
so the proof above shows that $\widetilde w,\widetilde w' \in \widetilde L$.
Now $\widetilde w =_{\pi_1(\GG)} w =_{\pi_1(\GG)} w' =_{\pi_1(\GG)} \widetilde w'$,
and so the uniqueness of representatives in $\widetilde L$ implies that
$\widetilde w$ and $\widetilde w'$ are the same word over $\widetilde X$.
Moreover, the argument above shows that the paths in $\lam$ associated with $w$,
$\widetilde w$, and $w'$ are the same. In particular, we can write
$\widetilde w = \tilde w_0s_{e_1}\tilde w_1\cdots s_{e_k}\tilde w_k$,
and there are elements 
$\tilde h_i,h_i' \in G_{e_i}$ for $1 \le i \le k$ such that
$w_k =_{G_{\tau(e_k)}} \tilde h_k\tilde w_k$,
$w_k' =_{G_{\tau(e_k)}}  h_k'\tilde w_k$,
and for $k>i \geq 1$,
$w_i\phi_{e_{i+1}}(\tilde h_{i+1}) =_{G_{\tau(e_i)}} 
    \tilde h_i \tilde w_i$,
$w_i'\phi_{e_{i+1}}(h_{i+1}') =_{G_{\tau(e_i)}} h_i' \tilde w_i$,
$w_0\phi_{e_1}(\tilde h_1) =_{G_{v_0}} \tilde w_0$, and
$w_0'\phi_{e_1}(\tilde h_1) =_{G_{v_0}} \tilde w_0$.
Hence the elements $h_i$ defined by $h_ih'_i=\tilde h_i$  for $1 \leq i \leq k$ satisfy the properties
needed for the conclusion in~(ii).
\end{proof}

We note that in the hypotheses of Lemma~\ref{lem:higginspath}(ii), the word $w$
is an arbitrary element of the inflated Higgins language
$L'=\hat L(\GG,\{(X_{\tau(e)}^\pm)^*\},(X_{v_0}^\pm)^*,v_0,\TT)$; that is,
$w$ is in the inflated Higgins language with respect to the largest possible sets
of coset and vertex group representatives.  
Thus when the $(\LL,L_{v_0})$--cascade process is
applied to a word in this maximal inflated Higgins language, a word is produced
in the inflated Higgins language with respect to more restricted coset and
vertex group representatives in $(\LL,L_{v_0})$.

Given a word $w$ in a Higgins normal form 
$w = \Deflation{w_0s_{e_1}u_1\cdots s_{e_k}u_k}{\TT}$ or
in coset normal form $w = \Deflation{u_0s_{e_1}u_1\cdots s_{e_k}u_k}{\TT}$, 
the {\em $\lam$--path associated with $w$} is the directed path $p=e_1 \cdots e_k$ in
the graph $\lam$.  
An immediate consequence of Lemma~\ref{lem:higginspath} is that any
two words in the inflated Higgins language
$\widehat L(\GG,\LL,L_{v_0},v_0,\TT)$ that represent the
same element of $\pi_1(\GG)$, or any two words in the inflated
Higgins coset language $\widehat L(\GG,\LL,e_0,\TT)$ that represent the
same coset of $G_{e_0}$ in $\pi_1(\GG)$, have the same associated $\lam$--path,
and so the $\lam$--path associated with a (deflated) Higgins normal form is
well-defined.

\begin{proof}[Proof of Theorem~\ref{thm:gog_coset}]
Let $\TT$ be a maximal tree in $\lam$ and let $G := \pi_1(\GG,\TT)$.
Applying Lemma~\ref{lem:sacaxoverlanguagechange}, for each $e \in \dE$ 
we modify the coset language $L_{\tau(e)}^e$ by
removing all representatives of the identity coset $G_e$ other than
the empty word $\emptyword$.
Let 
$$
X := \cup_{v \in V} X_v \cup \{s_e: e \in \dE \setminus \dE_\TT\},
$$
let $\LL:=\{L_{\tau(e)}^e\}$ be the collection of (modified) edge coset languages,
and let
$$
L:=L(\GG,\LL,e_0,\TT)
$$
be the associated Higgins coset language over $X$.
We shall prove that $L$ is the language of a \SACA structure for
the pair $(G,G_{e_0})$ over the generating set $X$.

Now $L$ is a coset language for $(\pi_1(\GG),G_{e_0})$ (as
discussed in Section~\ref{subsec:gogbackground}). Let $L_{\tau(e_0)}$ 
be a regular language of normal forms for $G_{\tau(e_0)}$. 
It is shown in~\cite[Prop.~3.3]{BHS} that  
the Higgins language for $\pi_1(\GG)$, with respect to 
$\LL,L_{\tau(e_0)},\tau(e_0)$, is a regular language; the same proof
shows that the Higgins coset language $L$ is also regular.

It now remains to verify fellow traveller properties for $L$.
Let $K$ be a common fellow traveller constant for the coset automatic 
structures for the pairs $(G_e,G_{\tau(e)})$. 
Applying Lemmas~\ref{lem:multiply_crossover} and~\ref{lem:multiply_stability}
to Hypotheses (ii) and (iii),
we can choose $\lambda \in \N$ to be large enough such that:
\begin{mylist}
\item[(a)] $K \le \lambda$;
\item[(b)] 
for each $e \in \dE$,
the triple $(G_e,G_\be,\phi_e)$ is $\lambda$-stable with respect to
$(Y_e,Y_\be)$; 
\item[(c)] 
for each $e,f \in \dE$ with $\tau(e)=\tau(f)$,
the coset language $L_{\tau(e)}^e$ has $\lambda$-limited crossover with respect to $(Y_e,Y_f)$;
\item[(d)] for each $e \in \dE$, any element $g \in G_e$ with
$|g|_{X_{\tau(e)}} \le K$ satisfies $|g|_{Y_e} \le \lambda$.
\end{mylist}
We define a further constant $N$ to be the maximum value of $|y|_{X_{\tau(e)}}$
for any $e \in \dE$ and any $y \in Y_e$.

Now suppose that $w,w' \in L$ are related by an equation
$wx=_G hw'$, where $h \in G_{e_0}$ and $x \in X \cup \{\emptyword\}$; that is,
either $x$ is in the generating set of $G$, or $x$ represents the identity of $G$.

Let $\widehat X := \cup_{v \in V} X_v \cup \{s_e: e \in \dE\}$,
let $\widehat L := \widehat L(\GG,\LL,e_0,\TT)$ be the inflated Higgins coset language, 
and let $L_{e_0} \subseteq (X_{\tau(e_0)}^\pm)^*$ be a 
set of words representing the elements of the group $G_{e_0}$.
We suppose that 
$\widehat w, \widehat{w'} \in \widehat L$
satisfy $w=\Deflation{\widehat w}{\TT}$ and $w'=\Deflation{\widehat{w'}}{\TT}$,
and write $\widehat{w} =u_0s_{e_1}u_1\cdots s_{e_k}u_k$.
Let $p:=e_1 \cdots e_k$ denote the path in $\lam$ determined by $w$, and
let $z_h \in L_{e_0}$
be any word representing the element $h$ in $G_{\tau(e_0)}$.

{\bf Case (1):}  Suppose that $x=\emptyword$.
Then $\widehat{w} =_G z_h\widehat{w'}$.  
The words $\widehat{w}$ and $z_h\widehat{w'}$ are both in the inflated Higgins
language 
$\widehat L(\GG,\LL,L_{e_0}L_{\tau(e_0)}^{e_0},\tau(e_0),\TT)$, and so
by Lemma~\ref{lem:higginspath}(ii),
 $z_h\widehat{w'}$
is an $(\LL,L_{e_0}L_{\tau(e_0)}^{e_0})$--cascade of 
$\widehat{w}$ also associated with the path $p$ in $\lam$.  
Thus we can write 
$\widehat{w'} =u'_0s_{e_1}u'_1\cdots s_{e_k}u'_k$ with
each $u_i' \in L_{\tau(e_i)}^{e_i}$.  Moreover, 
for $1 \le i \le k$ there are elements $h_i \in G_{e_i}$ 
for which if
$h_i':=\phi_{e_i}(h_i) \in G_{\be_i}$ and $h_0:=h$, then
\begin{eqnarray*}
u_kx =_{G_{\tau(e_k)}} h_ku'_k,&\quad & h'_k=_G s_{e_k}h_ks^{-1}_{e_k},\\
u_{k-1}h'_k=_{G_{\tau(e_{k-1})}} h_{k-1}u'_{k-1},
&&h'_{k-1}=_G s_{e_{k-1}}h_{k-1}s^{-1}_{e_{k-1}},\\
\ldots &&\ldots \hspace{2in} (*)\\
u_1h'_2=_{G_{\tau(e_1)}} h_1u'_1,
&& h'_1=_G s_{e_1}h_1s^{-1}_{e_1},\\
u_0h'_1 =_{G_{\tau(e_0)}}h_0u'_0. 
\end{eqnarray*}

An illustration of the paths 
${}_1w$ and ${}_hw'$ in the Cayley graph $\gam(G,\widehat X)$,
along with the connector $h_i$ and $h'_i$ paths in this
array of equations, is given in Figure~\ref{wdfig1}.  
We note that in this illustration, for each
index $j$ for which $e_j \in \dE_\TT$, the 
edges $s_{e_j}$ along the top and bottom paths 
actually label loops in $\gam(G,\widehat X)$.

\begin{figure}
\setlength{\unitlength}{0.9pt}
\begin{center}
\begin{picture}(380,53)(-5,-8)
\qbezier(0,20)(40,40)(80,20)
\qbezier(80,20)(95,10)(120,12)

\qbezier(0,0)(40,-25)(80,-10)
\qbezier(80,-10)(95,-5)(120,-1)
\put(0,20){\circle*{3}}
\put(0,0){\circle*{3}}
\put(0,20){\vector(0,-1){20}}
\put(3,9){\footnotesize $h_0$}
\put(12,33){$u_0$}
\put(12,-22){$u'_0$}
\put(40,30){\circle*{3}}
\put(40,-14.8){\circle*{3}}
\put(40,30){\vector(0,-1){44.4}}
\put(43,6){\footnotesize $h'_1$}
\put(52,32){$s_{e_1}$}
\put(52,-23){$s_{e_1}$}
\put(80,20){\circle*{3}}
\put(80,-10){\circle*{3}}
\put(80,20){\vector(0,-1){29.5}}
\put(83,4){\footnotesize $h_1$}
\put(120,12){\circle*{3}}
\put(120,-1){\circle*{3}}

\put(135,12){\circle*{2}}
\put(135,-1){\circle*{2}}
\put(145,12){\circle*{2}}
\put(145,-1){\circle*{2}}
\put(155,12){\circle*{2}}
\put(155,-1){\circle*{2}}

\qbezier(170,12)(270,50)(370,20)
\qbezier(170,-1)(270,-30)(370,20)
\put(170,12){\circle*{3}}
\put(170,-1){\circle*{3}}
\put(210,24){\circle*{3}}
\put(210,-9.2){\circle*{3}}
\put(210,24){\vector(0,-1){32.8}}
\put(213,8){\footnotesize $h'_{k-1}$}
\put(217,36){$s_{e_{k-1}}$}
\put(217,-20){$s_{e_{k-1}}$}
\put(250,31.8){\circle*{3}}
\put(258,38){$u_{k-1}$}
\put(250,-11){\circle*{3}}
\put(258,-21){$u'_{k-1}$}
\put(250,31.8){\vector(0,-1){42.2}}
\put(253,10){\footnotesize $h_{k-1}$}
\put(290,32.6){\circle*{3}}
\put(290,-7.5){\circle*{3}}
\put(304,37){$s_{e_k}$}
\put(290,32.6){\vector(0,-1){39.2}}
\put(293,13){\footnotesize $h'_k$}
\put(304,-13.5){$s_{e_k}$}
\put(330,29){\circle*{3}}
\put(330,3.3){\circle*{3}}
\put(347,29){$u_k$}
\put(330,29){\vector(0,-1){25.6}}
\put(335,14){\footnotesize $h_k$}
\put(347,0){$u'_k$}
\put(370,20){\circle*{3}}
\end{picture}
\end{center}
\caption{Fellow-travelling words representing the same coset}\label{wdfig1}
\end{figure}
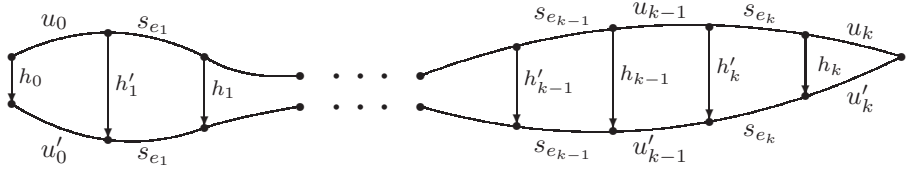

We have $|h_k|_{X_{\tau(e_k)}} \leq K$, from the fellow traveller property
on $L_{\tau(e_k)}^{e_k}$. Our condition (d) ensures that
$|h_k|_{Y_{e_k}} \leq \lambda$.
Then condition (b) ensures that $|h'_k|_{Y_{\be_k}}\leq \lambda$.
Then condition (c) ensures that $|h_{k-1}|_{Y_{e_{k-1}}} \leq \lambda$.
Repeated application of conditions (b) and (c), ensures that, for each $i$,
$|h_i|_{Y_{e_i}} \leq \lambda$ and $|h'_i|_{Y_{\be_i}} \leq \lambda$.
The definition of the constant $N$ shows that 
$|h'_i|_{X_{\tau(e_{i-1})}} \leq \lambda N$ for each $i$.
Application of the fellow traveller properties for the languages
$L_{\tau(e_i)}^{e_i}$ now ensures that ${}_1\widehat{w}$ and
${}_h\widehat{w'}$ asynchronously fellow travel at distance $KN\lambda$
in $\gam(G,\widehat X)$.

The paths ${}_1w$ and ${}_hw'$ in the Cayley graph
$\gam(G,X)$ are obtained from the paths ${}_1\widehat{w}$ and
${}_h\widehat{w'}$ by skipping the $s_{e_j}$ edges in both paths
whenever $e_j \in \dE_\TT$.  
Thus the paths ${}_1w$ and ${}_hw'$ also
asynchronously fellow travel at distance $KN\lambda$ in $\gam(G,X)$.

{\bf Case (2):} Suppose that $x \in X_{\tau(e_k)}$. 
Let $u$ be an $(\LL,L_{e_0}L_{\tau(e_0)}^{e_0})$--cascade of
$wx$.  Then the $\lam$--path associated with $u$ is a prefix
$p'=e_1 \cdots e_\ell$ of $p$.  Since the word $z_h\widehat{w'}$ is in
the inflated Higgins language 
$\widehat L(\GG,\LL,L_{e_0}L_{\tau(e_0)}^{e_0},\tau(e_0),\TT)$
and satisfies $u =_G z_h\widehat{w'}$, it follows from
by Lemma~\ref{lem:higginspath} that the word $z_h\widehat{w'}$ is 
an $(\LL,L_{e_0}L_{\tau(e_0)}^{e_0})$--cascade of
$u$, and the path in $\lam$ associated with $z_h\widehat{w'}$ is
also $p'$.
So $\widehat{w'} =u'_0s_{e_1}u'_1\cdots s_{e_\ell}u'_\ell$ 
with $\ell \le k$ and each $u'_i \in L_{\tau(e_i)}^{e_i}$.
In the case that $\ell<k$, 
let $u'_{\ell+1}= \cdots = u'_k = \emptyword$.

Now a composition of two cascades is again a cascade,
and so $z_h\widehat{w'}$ is an $(\LL,L_{e_0}L_{\tau(e_0)}^{e_0})$--cascade
of the word $wx$. 
Hence there are elements $h_i \in G_{e_i}$ for $1 \le i \le k$
for which if
$h_i':=\phi_{e_i}(h_i) \in G_{\be_i}$ and $h_0:=h$, then
the array in Equation~($*$) holds.
The corresponding paths in the Cayley graph $\gam(G,\widehat X)$
are illustrated in Figure~\ref{wdfig2}.

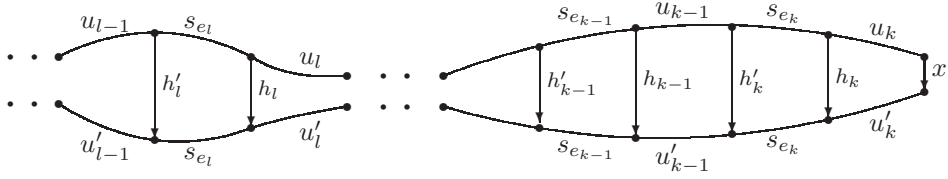
\begin{figure}
\setlength{\unitlength}{0.9pt}
\begin{center}
\begin{picture}(380,60)(-12,-15)
\put(-10,20){\circle*{2}}
\put(-10,0){\circle*{2}}
\put(0,20){\circle*{2}}
\put(0,0){\circle*{2}}
\qbezier(10,20)(50,40)(90,20)
\qbezier(90,20)(105,10)(130,12)

\qbezier(10,0)(50,-25)(90,-10)
\qbezier(90,-10)(105,-5)(130,-1)
\put(10,20){\circle*{3}}
\put(10,0){\circle*{3}}
\put(19,32){$u_{l-1}$}
\put(19,-20){$u'_{l-1}$}
\put(50,30){\circle*{3}}
\put(50,-14.8){\circle*{3}}
\put(50,30){\vector(0,-1){44.4}}
\put(53,6){\footnotesize $h'_l$}
\put(62,32){$s_{e_l}$}
\put(62,-23){$s_{e_l}$}
\put(90,20){\circle*{3}}
\put(90,-10){\circle*{3}}
\put(90,20){\vector(0,-1){29.5}}
\put(93,4){\footnotesize $h_l$}
\put(110,17){$u_l$}
\put(110,-15){$u'_l$}
\put(130,12){\circle*{3}}
\put(130,-1){\circle*{3}}

\put(145,12){\circle*{2}}
\put(145,-1){\circle*{2}}
\put(155,12){\circle*{2}}
\put(155,-1){\circle*{2}}

\qbezier(170,12)(270,50)(370,20)
\qbezier(170,-1)(270,-30)(370,5)
\put(170,12){\circle*{3}}
\put(170,-1){\circle*{3}}
\put(210,24){\circle*{3}}
\put(210,-10){\circle*{3}}
\put(210,25){\vector(0,-1){32.8}}
\put(213,7){\footnotesize $h'_{k-1}$}
\put(217,36){$s_{e_{k-1}}$}
\put(217,-20){$s_{e_{k-1}}$}
\put(250,31.8){\circle*{3}}
\put(250,-14){\circle*{3}}
\put(258,38){$u_{k-1}$}
\put(258,-25){$u'_{k-1}$}
\put(250,31.8){\vector(0,-1){45}}
\put(253,8){\footnotesize $h_{k-1}$}
\put(290,32.6){\circle*{3}}
\put(290,-12.6){\circle*{3}}
\put(304,37){$s_{e_k}$}
\put(290,32.6){\vector(0,-1){44.6}}
\put(293,8){\footnotesize $h'_k$}
\put(304,-18){$s_{e_k}$}
\put(330,29){\circle*{3}}
\put(330,-6.6){\circle*{3}}
\put(347,29){$u_k$}
\put(330,29){\vector(0,-1){35}}
\put(333,9){\footnotesize $h_k$}
\put(347,-11){$u'_k$}
\put(370,20){\circle*{3}}
\put(370,5){\circle*{3}}
\put(370,20){\vector(0,-1){14.5}}
\put(373,12){$x$}
\end{picture}
\end{center}
\caption{Fellow-travelling words, case (2)}\label{wdfig2}
\end{figure}

Then just as in Case~(1) we can bound the lengths of each $h_i,h'_i$ over appropriate
generating sets by $\lambda$, and see that ${}_1w$ and ${}_hw'$
asynchronously $KN\lambda$--fellow travel in $\gam(G,X)$.

{\bf Case (3):} Suppose that $x \in X_v$, for some vertex 
$v$, but $x \not \in X_{\tau(e_k)}$.
In this case  we extend the path $p$ in $\lam$ that corresponds to $w$
to a path $p'$ by appending the unique minimal path $e_{k+1}\cdots e_\ell$
within the tree $\TT$ from $\tau(e_k)$ to $v$; then $\tau(e_\ell)=v$. 
We consider the word 
$\widetilde w := \widehat{w}s_{e_{k+1}}u_{k+1}\cdots s_{e_\ell}u_\ell x$,
with $u_{k+1}=\cdots = u_\ell=\epsilon$.
Since $\widehat w$ does not end with a letter $s_e$ for an edge $e$ in $\TT$,
the word $\widetilde w$ satisfies the hypotheses of the word
$w$ in Lemma~\ref{lem:higginspath}(i).

Let $u$ be 
an $(\LL,L_{e_0}L_{\tau(e_0)}^{e_0})$--cascade of $\widetilde w$; by applying
both parts of Lemma~\ref{lem:higginspath}, we see that the word
$z_h\widehat{w'}$ is an $(\LL,L_{e_0}L_{\tau(e_0)}^{e_0})$--cascade of $u$,
and hence also of $\widetilde w$.
Now the $\lam$--path associated with both $u$ and $z_h\widehat{w'}$ may be 
a prefix $e_1 \cdots e_j$ of $p'$; we can write
$\widehat{w'} =u'_0s_{e_1}u'_1\cdots s_{e_j}u'_j$ 
with $j \le \ell$ and each $u'_i \in L_{\tau(e_i)}^{e_i}$, and
in the case that $j<\ell$, 
let $u'_{j+1}= \cdots = u'_\ell = \emptyword$.
The cascade from $\widetilde w$ to $z_h\widehat{w'}$ now yields
elements $h_i \in G_{e_i}$ for $1 \le i \le \ell$, 
which together with the elements
$h_i':=\phi_{e_i}(h_i) \in G_{\be_i}$ and $h_0:=h$
satisfy the array in Equation~(*).
The corresponding paths in the Cayley graph $\gam(G,\widehat X)$
are illustrated in Figure~\ref{wdfig3}.

\begin{figure}
\setlength{\unitlength}{0.9pt}
\begin{center}
\begin{picture}(380,60)(-12,-15)
\put(-10,20){\circle*{2}}
\put(-10,0){\circle*{2}}
\put(0,20){\circle*{2}}
\put(0,0){\circle*{2}}
\qbezier(10,20)(50,40)(90,20)
\qbezier(90,20)(105,10)(130,12)

\qbezier(10,0)(50,-25)(90,-10)
\qbezier(90,-10)(105,-5)(130,-1)
\put(10,20){\circle*{3}}
\put(10,0){\circle*{3}}
\put(19,32){$u_{k-1}$}
\put(19,-20){$u'_{k-1}$}
\put(50,30){\circle*{3}}
\put(50,-14.8){\circle*{3}}
\put(50,30){\vector(0,-1){44.4}}
\put(53,6){\footnotesize $h'_k$}
\put(62,32){$s_{e_k}$}
\put(62,-23){$s_{e_k}$}
\put(90,20){\circle*{3}}
\put(90,-10){\circle*{3}}
\put(90,20){\vector(0,-1){29.5}}
\put(93,4){\footnotesize $h_k$}
\put(110,17){$u_k$}
\put(110,-15){$u'_k$}
\put(130,12){\circle*{3}}
\put(130,-1){\circle*{3}}

\put(145,12){\circle*{2}}
\put(145,-1){\circle*{2}}
\put(155,12){\circle*{2}}
\put(155,-1){\circle*{2}}

\qbezier(170,12)(270,50)(370,20)
\qbezier(170,-1)(270,-30)(370,5)
\put(170,12){\circle*{3}}
\put(170,-1){\circle*{3}}
\put(210,24){\circle*{3}}
\put(210,-10){\circle*{3}}
\put(210,25){\vector(0,-1){32.8}}
\put(213,7){\footnotesize $h'_{l-1}$}
\put(217,36){$s_{e_{l-1}}$}
\put(217,-20){$s_{e_{l-1}}$}
\put(250,31.8){\circle*{3}}
\put(250,-14){\circle*{3}}
\put(258,38){$u_{l-1}$}
\put(258,-25){$u'_{l-1}$}
\put(250,31.8){\vector(0,-1){45}}
\put(253,8){\footnotesize $h_{l-1}$}
\put(290,32.6){\circle*{3}}
\put(290,-12.6){\circle*{3}}
\put(304,37){$s_{e_l}$}
\put(290,32.6){\vector(0,-1){44.6}}
\put(293,8){\footnotesize $h'_l$}
\put(304,-18){$s_{e_l}$}
\put(330,29){\circle*{3}}
\put(330,-6.6){\circle*{3}}
\put(347,29){$u_l$}
\put(330,29){\vector(0,-1){35}}
\put(333,9){\footnotesize $h_l$}
\put(347,-11){$u'_l$}
\put(370,20){\circle*{3}}
\put(370,5){\circle*{3}}
\put(370,20){\vector(0,-1){14.5}}
\put(373,12){$x$}
\end{picture}
\end{center}
\caption{Fellow-travelling words, case (3)}\label{wdfig3}
\end{figure}

Then, as in Case~(1), we deduce that ${}_1w$ and ${}_hw'$
asynchronously fellow travel in $\gam(G,X)$ at a distance bounded by $KN\lambda$.

{\bf Case (4):} Suppose that $x=s_e$ for some  $e \in \dE \setminus \dE_\TT$.
If $u_k \neq \epsilon$ or $e \neq \be_k$,
then the word $\widehat{w}s_{e_{k+1}} \cdots s_{e_\ell}x$ is in the inflated
Higgins coset language $\widehat L$,
where $e_{k+1} \cdots e_\ell$ is the unique minimal path (possibly empty)
in the tree $\TT$ from $\tau(e_k)$ to the initial vertex of $e$.
Then $wx=\Deflation{\widehat{w}s_{e_{k+1}} \cdots s_{e_\ell}x}{\TT}$
is in the Higgins coset language $L$.  
In this subcase the proof in Case~(1) shows 
that the paths ${}_1wx$ and ${}_hw'$ in $\gam(G,X)$
$KN\lambda$--fellow travel.

So now suppose that $u_k =\epsilon$ and $e=\be_k$.
In that case we can write $\widehat{w} = \widetilde w s_{e_k}$ with
$\widetilde w \in (\widehat X^\pm)^*$, and also write
$\widetilde w = \widetilde{w}''s_{e_j} \cdots s_{e_{k-1}}$, where
$s_{e_j} \cdots s_{e_{k-1}}$ is the maximal suffix of $\widetilde w$
lying in $\{s_e \mid e \in \dE_\TT\}^*$.
That is, $\widetilde w''$ is obtained from $\widehat{w}$ by
removing the letter $s_{e_k}$ at the end, and then removing 
any resulting suffix of generators $s_{e_j}$ associated with edges
lying in tree $\TT$.
Now $\widetilde{w}''$ is in the inflated Higgins coset language 
$\widehat L$, and so the word
$w'' := \Deflation{\widetilde{w}''}{\TT}$ is in the Higgins
coset language $L$.
Moreover, $w'' =_G wx =_G hw'$, and so
Case~(1) applies to show that the paths
${}_1w''$ and ${}_hw'$ in $\gam(G,X)$ 
asynchronously $KN\lambda$--fellow travel.  
Since $w=w''s_{e_k}$, the paths 
${}_1w$ and ${}_hw'$ 
asynchronously $KN\lambda+1$--fellow travel in $\gam(G,X)$.
\end{proof}

\subsection{Finding synchronous subsystems}
\label{subsec:synchsub}

Note that we might expect that an argument analogous to the proof
of Theorem~\ref{thm:gog_coset} would allow us to
derive a synchronous fellow traveller property for $L$ from synchronous
fellow traveller properties for the coset languages $L_{\tau(e)}^e$.
However it is not clear that this is possible, since 
it seems likely that for words 
$\Deflation{u_0s_{e_1}u_1\cdots s_{e_k}u_k}{\TT}$ and 
$\Deflation{u'_0s_{e'_1}u'_1\cdots s_{e'_{k'}}u'_{k'}}{\TT}$ 
 as above, representing the same coset,
the lengths of the corresponding subwords $u_j$ and $u'_j$ could differ.

But, as we prove in Proposition~\ref{prop:synchsub} below,
under certain conditions a strong asynchronous automatic coset system
must contain a synchronous system as a substructure.
We shall use this result to derive Theorem~\ref{thm:gog_coset_synch} and other 
synchronous results relating to Theorem~\ref{thm:gog_coset}.
Our proof of the proposition emulates the proof
of~\cite[Lemma~1]{Elder}, which shows that two geodesic paths
that start at 1 in a Cayley graph and
and  asynchronously $K$-fellow travel must also synchronously $2K$-fellow travel.

\begin{proposition}\label{prop:synchsub}
Suppose that $(G,H)$ has a strong asynchronous automatic coset system
$L^H$ for which $\LHGeo := L^H \cap \Geo^H$ is a coset language (contains at
least one representative of each coset).
Then $\LHGeo$ is a strong synchronous automatic coset system for $(G,H)$. 
\end{proposition}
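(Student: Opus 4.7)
The plan is to adapt the strategy of \cite[Lemma~1]{Elder}, which upgrades an asynchronous fellow-travelling bound between geodesic paths into a synchronous one. The pivotal observation is that every $w\in\LHGeo$ is itself a geodesic word in $\gam(G,X)$: from $|w|=d_{\gam(G)}(1,Hw)\leq d_{\gam(G)}(1,w)\leq|w|$ we deduce $d_{\gam(G)}(1,w)=|w|$, so that ${}_1w$ is a genuine geodesic and $d_{\gam(G)}(1,w(s))=s$ for every $s\leq|w|$. This geodesicity is the lever that will convert the asynchronous displacement bound supplied by $L^H$ into a synchronous one on $\LHGeo$.

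For the synchronous fellow-traveller property I take $u,v\in\LHGeo$ and $h\in H$ with $d_{\gam(G)}(u,hv)\leq 1$, so in particular $|h|_X\leq K$. The hypothesis supplies nondecreasing surjections $\phi_1,\phi_2\colon\N\to\N$ with $d_{\gam(G)}(u(\phi_1(t)),hv(\phi_2(t)))\leq K$ for every~$t$. Applying the triangle inequality from the basepoint~$1$ to the two tracked points, and using the geodesicity of $u$ and of $v$, one obtains the key comparison $|\phi_1(t)-\phi_2(t)|\leq K+|h|_X\leq 2K$. Then, given any target time $t^*$, surjectivity of $\phi_1$ lets me pick $\tau$ with $\phi_1(\tau)=\min(t^*,|u|)$; setting $s^*=\phi_2(\tau)$,
\[
d_{\gam(G)}\bigl(u(t^*),hv(t^*)\bigr)\;\leq\; d_{\gam(G)}\bigl(u(t^*),hv(s^*)\bigr)+d_{\gam(G)}\bigl(hv(s^*),hv(t^*)\bigr)\;\leq\; K+|s^*-t^*|\;\leq\; 3K.
\]
The edge cases in which $t^*$ exceeds $|u|$ or $|v|$ are dispatched using the sharp length comparison $\bigl||u|-|v|\bigr|\leq 1$, which follows from $u,v\in\Geo^H$ together with $d_{\gam(G)}(u,hv)\leq 1$ via the identities $|u|=d_{\gam(G)}(1,Hu)$ and $|v|=d_{\gam(G)}(1,Hv)$ and a short triangle-inequality chase.

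For regularity of $\LHGeo$ I will use the (asynchronous two-variable) multiplier automata packaged with the strong asynchronous coset system on $L^H$, which recognise the regular relation $\{(w,w')\in L^H\times L^H:Hw=Hw'\}$. Standard finite-state constructions in the spirit of \cite[Thm.~2.5.1]{ECHLPT} then extract from this a regular sublanguage of $L^H$ consisting of minimum-length representatives of each coset, and the hypothesis that $\LHGeo$ is itself a coset language identifies this sublanguage with $\LHGeo$.

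The main obstacle I anticipate is the bookkeeping in the fellow-travel argument at the right-hand endpoints of the paths, where one or both of $\phi_1(\tau),\phi_2(\tau)$ has saturated at $|u|$ or $|v|$ and the corresponding path has stabilised. The length comparison $\bigl||u|-|v|\bigr|\leq 1$ is precisely what is needed to close these cases and to recover a uniform synchronous fellow-travel constant proportional to $K$, so that $\LHGeo$ together with this new constant is a strong synchronous automatic coset system for $(G,H)$.
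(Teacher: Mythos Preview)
Your fellow-traveller argument is correct and follows the same Elder-style upgrade as the paper: once you observe that words in $\LHGeo$ are geodesic, the asynchronous bound plus the triangle inequality from the basepoint controls the time-lag $|\phi_1(t)-\phi_2(t)|$, and the endpoint cases are closed by $\bigl||u|-|v|\bigr|\le 1$. The paper does the same thing with a closest-point argument and gets constant $2K$; your $3K$ is fine.

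The regularity argument, however, has a genuine gap. You invoke ``the (asynchronous two-variable) multiplier automata \ldots\ which recognise the regular relation $\{(w,w')\in L^H\times L^H:Hw=Hw'\}$'' and then appeal to \cite[Thm.~2.5.1]{ECHLPT}. But in the asynchronous setting this relation is only rational (recognised by a nondeterministic two-tape automaton), not a synchronous regular relation, and rational relations are not closed under complement or intersection; the finite-state extraction in \cite[Thm.~2.5.1]{ECHLPT} is a \emph{synchronous} construction and does not transfer. Even if it did, that theorem produces a uniqueness sublanguage (e.g.\ shortlex-least), not the set of \emph{all} length-minimal representatives, so the final identification with $\LHGeo$ would fail whenever a coset has several length-minimal representatives in $L^H$.

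The paper fills exactly this hole, and the work is nontrivial. One must compare an arbitrary $w\in L^H\setminus\LHGeo$ with some shorter $v\in\LHGeo$ in the same coset, but $w$ is not geodesic, so your Elder argument does not apply to the pair $(w,v)$. The paper instead looks at the \emph{shortest} prefix $w'$ of $w$ that fails to be coset-minimal, uses the pumping constant $N$ of the $L^H$-automaton to extend $w'$ to some $w'u\in L^H$ with $|u|<N$, and then runs the Elder argument on $(w'u,v)$, exploiting the fact that the maximal proper prefix of $w'$ \emph{is} in $\Geo^H$. This yields a \emph{synchronous} $(2KN+2)$-fellow-travel bound between ${}_1w'$ and ${}_hv$, from which one can build a genuine word-difference automaton recognising $L^H\setminus\LHGeo$. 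Your proposal is missing this step entirely.
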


\begin{proof}
We first prove regularity of $\LHGeo$ by proving regularity of its complement in
$L^H$.  Let $X$ be the generating set for $G$ and 
let $K$ be the asynchronous fellow traveller constant associated with 
$L^H \subseteq (X^\pm)^*$, and let
$N$ be the number of states in the automaton recognising $L^H$.

Suppose that $w \in L^H \setminus \LHGeo$, and 
let $w'$ be the shortest prefix of $w$ that is not of minimal length within
its coset. Then there exists $v \in \LHGeo$ with $|v|<|w'|$ and $v \in Hw'$,
and there exists a word $w'u \in L^H$, with $|u|<N$.
Let $h \in H$ with $w'=_G hv$.
Then, since $v,w'u \in L^H$ with $(Hv)u =_G Hw'u$, the fellow traveller
condition on $L^H$ implies that the paths ${}_1 w'u$ and
${}_hv$ in $\gam(G,X)$
asynchronously $KN$-fellow travel.
Note that this implies in particular that $\elen{h}{X}  \le KN$.

We shall now show that ${}_1 w'$ and ${}_hv$ synchronously fellow travel
with constant $2KN+2$.
Take any vertex $g_1$ of $\gam(G)$ on the path ${}_1 w'$, and let $g_2$
be a vertex of $\gam(G)$ on the path ${}_h v$ that is closest to $g_1$.
Let $u_1$ be the prefix of $w'$ labeling the subpath of ${}_1 w'$
from 1 to $g_1$, and $u_2$ the label of the subpath of
${}_h v$ from $h$ to $g_2$.

Now, both $v$ and the maximal proper prefix of $w'$ 
are shortest representatives of their cosets of $H$, and any prefix
of a word in $\Geo^H$ is also in $\Geo^H$.  Then $u_2 \in \Geo^H$,
and either $u_1=w'$ or $u_1 \in \Geo^H$.  Then we have
$|u_2| \le |u_1| +KN$ and $|u_1| \le |u_2| +KN+2$,
and hence $||u_1|-|u_2|| \le KN+2$. 
So now the vertex $g_3$ of ${}_hv$ that is at distance $|u_1|$ from $h$
is at distance at most $KN+2$ from $g_2$, and hence distance at
most $(KN+2)+KN=2KN+2$ from $g_1$ (see Figure \ref{synchfig}). This verifies
our claim that ${}_1w'$ and ${}_hv$ synchronously $(2KN+2)$-fellow travel.

\begin{figure}
\begin{center}
\begin{picture}(300,180)(-30,-10)
\qbezier(0,72)(44,72)(105,60)
\qbezier(135,54)(185,42)(240,12)
\qbezier(0,60)(110,60)(240,0)
\put(116,57){$w'$}
\put(240,12){\vector(3,-2){0}}
\put(0,0){\circle*{3}}
\put(0,60){\circle*{3}}
\put(240,0){\circle*{3}}
\put(0,0){\line(1,0){240}}
\put(0,-12){\line(1,0){97}}
\put(127,-12){\vector(1,0){113}}
\put(110,-14){$v$}
\put(0,56){\vector(0,-1){52}}
\put(-10,28){$h$}
\put(110,0){\circle*{3}}
\put(110,46){\circle*{3}}
\put(64,0){\circle*{3}}
\put(110,46){\line(0,-1){46}}
\put(110,46){\line(-1,-1){46}}
\put(48,48){$u_1$}
\put(28,-7){$u_2$}
\put(113,36){$g_1$}
\put(64,-7){$g_2$}
\put(113,5){$g_3$}
\put(61,21){\scriptsize $\le KN$}
\put(112,21){\scriptsize $\le 2KN\!\!+\!\!2$}
\put(74,5){\scriptsize $\le KN\!\!+\!\!2$}

\qbezier(0,172)(44,172)(105,160)
\qbezier(135,154)(185,142)(240,112)
\qbezier(0,160)(110,160)(240,100)
\put(116,157){$w'$}
\put(240,112){\vector(3,-2){0}}
\put(0,100){\circle*{3}}
\put(0,160){\circle*{3}}
\put(240,100){\circle*{3}}
\put(0,100){\line(1,0){240}}
\put(0,88){\line(1,0){97}}
\put(127,88){\vector(1,0){113}}
\put(110,86){$v$}
\put(0,156){\vector(0,-1){52}}
\put(-10,128){$h$}
\put(110,100){\circle*{3}}
\put(110,146){\circle*{3}}
\put(156,100){\circle*{3}}
\put(110,146){\line(0,-1){46}}
\put(110,146){\line(1,-1){46}}
\put(48,148){$u_1$}
\put(77,93){$u_2$}
\put(98,105){$g_3$}
\put(98,140){$g_1$}
\put(154,93){$g_2$}
\put(138,122){\scriptsize $\le KN$}
\put(82,122){\scriptsize $\le 2KN$}
\put(118,105){\scriptsize $\le KN$}
\end{picture}
\end{center}
\caption{${}_1 w'$ and ${}_hv$ synchronously fellow travel}\label{synchfig}
\end{figure}
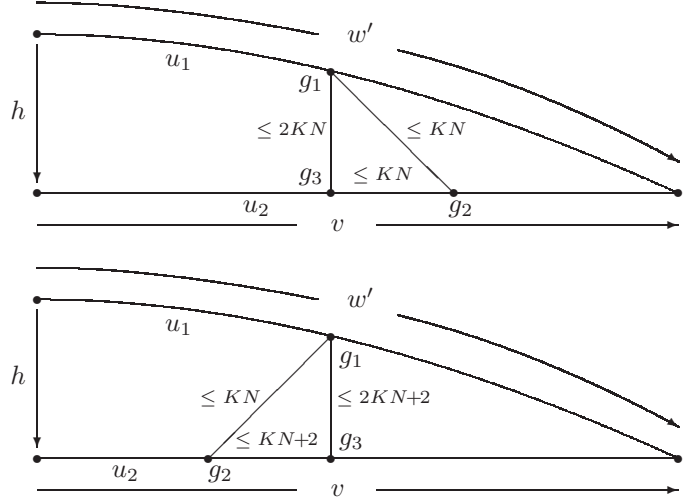

Using the elements of $G$ in the ball of
radius $2KN+2$ centred at 1 (or ``word differences'')
in constructing a finite
set of states, we can construct a finite state automata to recognise
the languages (of padded pairs)
\begin{eqnarray*}
L_h &:=& \{(w,v) \mid w,v \in L^H \text{ and } w \text{ has a prefix } w' 
   \text{ with } w' =_G hv,
   \wlen{w'}>\wlen{v},\\ 
   && \text{ and }
    {}_1 w', {}_hv \text{ synchronously }(2KN+2)\text{--fellow travel}\}
\end{eqnarray*}
for each $h \in H$ with $\elen{h}{X} \le KN$.
(See~\cite[Definition 2.3]{ECHLPT} or~\cite[Section 5.2.1]{HRRbook} for more details on word difference
machines and padding to convert a language of pairs of words to a
language of words over a product alphabet.)
The language $L^H \setminus \LHGeo$ is the union of the
projections onto the first coordinate of the sets $L_h$.
Since regularity is preserved by projection, 
we see that the complement of $\LHGeo$ is indeed regular,
and hence so is $\LHGeo$.

Now suppose that $v,w \in \LHGeo$ and $h \in H$, with $d_{\gam(G)}(w,hv) \leq 1$.
Then, much as above, we see that ${}_1 w$ and ${}_hv$ synchronously fellow
travel with constant $2K$.  For now,
if $g_1$ is any vertex of $\gam(G)$ on the path ${}_1 w$, $g_2$
a vertex of $\gam(G)$ 
that is closest to $g_1$ on the path ${}_h v$, 
$u_1$ the label of the subpath of ${}_1 w'$ from 1 to $g_1$, 
$u_2$ the label of the subpath of ${}_h v$ from $h$ to $g_2$, and
$g_3$ the vertex of ${}_hv$ at distance $|u_1|$ from $h$, then
$|u_2| \le |u_1| +K$ and $|u_1| \le |u_2| +K$,
and hence $||u_1|-|u_2|| \le K$, and $d_{\gam(G)}(g_1,g_3) \leq 2K$. 
\end{proof}

\subsection{A synchronous result for graphs of groups}
\label{subsec:gog_coset_synch}

\begin{theoremA}\label{thm:gog_coset_synch}
Let $\GG = (\lam=(V,\dE), \{ G_v: v \in V \}, \{ G_e: e \in \dE \},
       \{\phi_e \mid e \in \dE\} )$
be a graph of groups over a finite connected directed graph 
$\lam$  with an edge $e_0$.
Let $X_v$ and $Y_e$ be finite generating sets of the groups
$G_v$ and $G_e$, respectively.
Suppose that the
following conditions hold for each $e \in \dE$.
\begin{mylist}
         \item[(i)]  $Y_e \subseteq X_{\tau(e)}$.
	\item[(ii)] The pair $(G_{\tau(e)},G_e)$ is strongly synchronously coset 
	automatic with coset language $L_{\tau(e)}^e$
       satisfying
       $
       L_{\tau(e)}^e \subset \Geo(G_{\tau(e)},X_{\tau(e)}) \cap
       [(X_{\tau(e)}^{\pm})^* \setminus Y_e^\pm(X_{\tau(e)}^{\pm})^*],
       $
       the only representative in $L_{\tau(e)}^e$ of the identity coset is $\emptyword$,
       and each element $g \in G_{\tau(e)}$ is represented by a 
       word $y_gz_g \in \Geo(G_{\tau(e)},X_{\tau(e)})$ 
       with $y_g \in (Y_e^\pm)^*$ and $z_g \in L_{\tau(e)}^e$.
	\item[(iii)] 
	The triple $(G_e,G_\be,\phi_e)$ is 1--stable 
         with respect to $(Y_e,Y_\be)$.
	\item[(iv)] For each $f \in \dE$ with $\tau(e)=\tau(f)$, 
	the coset language  
	$L_{\tau(e)}^e$ has limited crossover with respect to $(Y_e,Y_f)$.
\end{mylist}
Then the pair $(\pi_1(\GG),G_{e_0})$ is strongly synchronously coset automatic. 
\end{theoremA}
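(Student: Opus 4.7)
The plan is to obtain strong asynchronous coset automaticity from Theorem~\ref{thm:gog_coset} and then upgrade to synchronous via Proposition~\ref{prop:synchsub}. First I would verify the hypotheses of Theorem~\ref{thm:gog_coset}: condition~(ii) here implies condition~(i) of Theorem~\ref{thm:gog_coset} (synchronous is a stronger form of asynchronous, and the condition that $\emptyword$ is the representative of the identity coset is explicit in~(ii)); condition~(iii) gives stability, since $1$-stability is a particular instance; and condition~(iv) is identical to~(iii) of Theorem~\ref{thm:gog_coset}. This yields strong asynchronous coset automaticity for $(\pi_1(\GG),G_{e_0})$ with the Higgins coset language $L=L(\GG,\LL,e_0,\TT)$ over $X=\bigcup_v X_v\cup\{s_e\colon e\in\dE\setminus\dE_\TT\}$ for any maximal tree $\TT$ in $\lam$.

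To apply Proposition~\ref{prop:synchsub}, it suffices to show that $L\cap\Geo^{G_{e_0}}$ remains a coset language for $(\pi_1(\GG),G_{e_0})$. The strategy is to show that the cascade procedure of Definition~\ref{def:cascade} can be performed in a length-non-increasing fashion, using condition~(ii) to make the decomposition choices. Given an input word $\widehat v = v_0 s_{e_1} v_1\cdots s_{e_k} v_k\in(\widehat X^\pm)^*$, I would process from right to left: at step $j$, form $g_j = v_j\phi_{e_{j+1}}(h_{j+1})\in G_{\tau(e_j)}$ and select $h_j:=y_{g_j}$, $u_j:=z_{g_j}$ from the geodesic decomposition $y_{g_j}z_{g_j}\in\Geo(G_{\tau(e_j)},X_{\tau(e_j)})$ guaranteed by~(ii), so that $|u_j|+|h_j| = |g_j|_{X_{\tau(e_j)}}$.

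The decisive fact is the equality $|h_j|_{Y_{e_j}}=|h_j|_{X_{\tau(e_j)}}$ (as an element of $G_{e_j}\subseteq G_{\tau(e_j)}$): if $h_j$ admitted a strictly shorter expression over $Y_{e_j}^\pm\subseteq X_{\tau(e_j)}^\pm$, one would obtain a strictly shorter $X_{\tau(e_j)}^\pm$-expression of $g_j$, contradicting the geodesicity of $y_{g_j}z_{g_j}$. Combining this with the $1$-stability of $\phi_{e_j}$ and the inclusion $Y_{\be_j}\subseteq X_{\tau(\be_j)}=X_{\iota(e_j)}=X_{\tau(e_{j-1})}$ (from condition~(i)) gives $|\phi_{e_j}(h_j)|_{X_{\tau(e_{j-1})}}\le|\phi_{e_j}(h_j)|_{Y_{\be_j}} = |h_j|_{Y_{e_j}} = |h_j|_{X_{\tau(e_j)}}$. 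A telescoping summation then yields $\sum_j|u_j|\le\sum_j|v_j|$, so that cascade does not increase the total vertex-segment length.

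To complete the argument, start with a coset-geodesic word $v\in(X^\pm)^*$ representing $G_{e_0}g$, inflate $v$ to $\widehat v\in(\widehat X^\pm)^*$ by inserting tree $s_e$ letters so as to give $\widehat v$ a valid graph-of-groups path structure (starting at $\tau(e_0)$), cascade to produce $\widehat w\in\widehat L$ via Lemma~\ref{lem:higginspath}(i), and deflate to obtain $w=\Deflation{\widehat w}{\TT}\in L$. Since deflation removes only tree $s_e$ letters, and the cascade preserves the count of non-tree edges in the associated $\lam$-path (the only suffix removed in cascade step~(iv) consists of tree edges), the inequality $\sum|u_j|\le\sum|v_j|$ transfers to $|w|\le|v|$; geodesicity of $v$ then forces equality, so $w\in L\cap\Geo^{G_{e_0}}$ represents the coset, as required. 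The main obstacle is the last bookkeeping step: one must take care that the inflation of $v$ is compatible with the hypotheses of Lemma~\ref{lem:higginspath} (avoiding cancellable patterns, so that the cascade indeed produces a word in $\widehat L$), and that the non-tree edge counts between inflated and cascaded forms really match so the length bound survives deflation.
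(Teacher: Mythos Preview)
Your proposal is correct and follows essentially the same route as the paper's proof: deduce \SACA from Theorem~\ref{thm:gog_coset}, then invoke Proposition~\ref{prop:synchsub} by showing that every coset has a representative in $L\cap\Geo^{G_{e_0}}$, obtained by inflating a coset-geodesic, running a length-controlled cascade using the geodesic factorisations of hypothesis~(ii) together with the $1$-stability of~(iii), and deflating. Your telescoping length estimate and the observation that non-tree $s_e$ letters are unaffected are exactly the mechanisms the paper uses; the only point where the paper is more explicit is in the ``main obstacle'' you flag at the end---after the cascade one must additionally strip any $s_es_{\bar e}$ subwords with $e\in\dE_\TT$ (not just a tree-edge suffix) and then verify directly that the cleaned word lies in $\widehat L$, using geodesicity of the deflation to rule out $s_es_{\bar e}$ with $e\notin\dE_\TT$.
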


\begin{proof}
Let $G:= \pi_1(\GG)$, $H:=G_{e_0}$, and $X := \cup_{v \in V} X_v$, 
and let $\TT$ be any tree in $\lam$.  
Let $\LL := \{L_{\tau(e)}^e \mid e \in \dE\}$.
Then Theorem~\ref{thm:gog_coset} shows that the pair
$(G,H)=(\pi_1(\GG),G_{e_0})$ is \SACA, with respect to the Higgins
coset language
$L^H:=L(\GG,\LL,e_0,\TT)\subseteq (X^\pm)^*$.

By Proposition~\ref{prop:synchsub}, it suffices to show
that $L^H \cap \Geo_{G}^H(X)$ contains at least one representative of each coset.
Note that the empty word is in $L^H \cap \Geo_{G}^H(X)$.

Let $w$ be any nonempty element of $\Geo_G^H(X)$; that is, $w$ is of
minimal length as a representative over $X$ of the right coset $Hw$ in $G$. 
Write $w=x_1 \cdots x_m$ with each $x_i \in X^\pm$.
For each index $i$ such that
$x_i =s_e^{-1} \in \{s_{e'}^{-1} \mid e' \in \dE \setminus \dE_\TT\}$,
replace $x_i$ by $s_\be$, to obtain a word 
$w'=\tilde x_1 \cdots \tilde x_m$ over
$(\cup_{v \in V} X_v^\pm)\cup\{s_{e'} \mid e' \in \dE \setminus \dE_\TT\}$.
For $1 \le i \le m$, if $\tilde x_i \in \cup_{v \in V} X_v^\pm$ then let
$v_i=v_i'$ be the unique vertex in $\lam$ for which $\tilde x_i \in X_{v_i}$ and if 
$\tilde x_i \in \{s_{e'} \mid e' \in \dE \setminus \dE_\TT\}$ then let
$v_i$ and $v_i'$ be the initial and terminal
vertices, respectively, of the edge $e$ for which $\tilde x_i=s_e$.
Let $t_0 \in \{s_e \mid e \in \dE_\TT\}^*$
be the (possibly empty)
word corresponding to the geodesic path in the tree $\TT$ from
$\tau(e_0)$ to the vertex $v_1$.  Similarly for $1 \le i \le m-1$ let
$t_i \in \{s_e \mid e \in \dE_\TT\}^*$ be
the word corresponding to the geodesic in $\TT$ from
$v_{i}'$ to $v_{i+1}$.  
Let $\widehat w:= t_0\tilde x_1t_1 \cdots t_{m-1}\tilde x_m$.
Then 
$w'=\Deflation{\widehat w}{\TT}$.

Repartitioning the subwords of $\widehat w$, we can write 
$\widehat w = u_0s_{e_1}u_1 \cdots s_{e_k}u_k$ with 
$u_i \in (X_{\tau(e_i)}^\pm)^*$ for each $i$,
and $e_1 \cdots e_k$ is a path in $\lam$ starting at $\tau(e_0)$.
Since the original word $w$ is a geodesic over $X$,
and each $u_i$ is a subword of $w$, 
we have $u_i \in \Geo(G_{\tau(e_i)},X_{\tau(e_i)})$ for all $i$.

Next we construct a choice of
$(\LL,(Y_{e_0}^\pm)^*L_{\tau(e_0)}^{e_0})$--cascade
of $\widehat w$.
By hypothesis (ii), 
the element of $G_{\tau(e_k)}$ represented by $u_k$
is also represented by a word of the form
$y_ku_k' \in \Geo(G_{\tau(e_k)},X_{\tau(e_k)})$ with
$y_k \in (Y_{e_k}^\pm)^*$ and $u_k' \in L_{\tau(e_k)}^{e_k}$.
Note that if $u_k$ represents an element of $G_{e_k}$, then $u_k'=\emptyword$.
Then $\wlen{y_k}+\wlen{u_k'}=\wlen{u_k}$.
Now the 1--stability condition says that there is
a word $y_k' \in (Y_{e_{k-1}}^\pm)^*$ with
$y_k' =_{G_{e_k}} \phi_{e_k}(y_k) =_G s_{e_k}y_ks_{e_k}^{-1}$ 
and $\wlen{y_k'}=\wlen{y_k}$.
Next there is word $y_{k-1}u_{k-1}' \in \Geo(G_{\tau(e_{k-1})},X_{\tau(e_{k-1})})$
representing the element $u_{k-1}y_k'$ of $G_{\tau(e_{k-1})}$,
with 
$y_{k-1} \in (Y_{e_{k-1}}^\pm)^*$ and $u_{k-1}' \in L_{\tau(e_{k-1})}^{e_{k-1}}$
(and again $u_{k-1}' =\emptyword$ if $u_{k-1}y_k'$, and hence also $u_{k-1}$,
represents an element of $G_{e_{k-1}}$).
Then $\wlen{y_{k-1}}+\wlen{u_{k-1}'} \le \wlen{u_{k-1}}+\wlen{y_k'}$.
Repeating this process, we obtain the word
$\widehat w'=y_0u_0's_{e_1}u_1' \cdots s_{e_k}u_k'$ satisfying
\begin{eqnarray*}
\wlen{y_0} + \sum_{j=0}^k \wlen{u_j'} 
&\le&  \wlen{u_0}+\wlen{y_1'} +\sum_{j=1}^k \wlen{u_j'} 
\le \cdots 
\le \left(\sum_{j=0}^{i-1} \wlen{u_j}\right) + \wlen{y_i'}
+\left(\sum_{j=i}^k \wlen{u_j'}\right) \\
&\le&  \cdots \le
\sum_{j=0}^{k} \wlen{u_j}. 
\end{eqnarray*} 
Let $\widehat w''=u_0''s_{e_1''}u_1'' \cdots s_{e_\ell''}u_\ell''$
be the word obtained from $\widehat w'$ by removing the $y_0$ prefix,
removing the maximal
suffix in $\{s_e \mid e \in \dE_\TT\}^*$, and (iteratively) removing
any subwords of the form $s_es_\be$ with $e \in \dE_\TT$.

Let $w'':=\Deflation{\widehat w''}{\TT}$.
Then $w''$ represents
the same coset of $H$ in $G$ as the word $w \in \Geo_G^H(X)$.
Since the words $w,w''$ contain the same
number of letters in $\{s_{e} \mid e \in \dE \setminus \dE_\TT\}$ (because
the inflation, cascade, and deflation processes don't alter those letters),
we have
$\sum_{j=0}^{k} \wlen{u_j} \le \sum_{j=0}^\ell \wlen{u_j''} =
\sum_{j=0}^k \wlen{u_j'}$.  Hence these sums are equal,
$y_0 = \emptyword$, and $w'' \in \Geo_G^H(X)$ as well.

The fact that $\emptyword$ is 
the only representative of the identity coset $G_e$ in each $L_{\tau(e)}^e$
guarantees that 
either $\ell=0$ or $e_\ell \notin \dE_\TT$ or $u_\ell''$ does
not represent an element of $G_{e_\ell}$, and
similarly guarantees that, for each subword
$s_{e_i''}u_i''s_{e_{i+1}''}$ of $\widehat w''$, either
$u_i''=\emptyword$ or $u_i''$ does not represent an element of $G_{e_i}$.
By construction, $\widehat w''$ doesn't contain a subword
of the form $s_es_{\be}$ for any $e \in \dE_\TT$.
If the word $\widehat w''$ contains a subword of the form $s_es_\be$
with $e \in \dE \setminus \dE_\TT$, then so does the deflated word $w''$,
contradicting the fact that $w''$ is a geodesic word over $X$.
Then $\widehat w''$ is in the inflated Higgins coset language
$\widehat L(\GG,\LL,e_0,\TT)$, and so its deflation $w''$ is in the
language $L^H$.
  
Therefore $w''$ is an element of $L^H \cap \Geo_{G}^H(X)$ 
representing the same coset as the original word $w$.
Hence $L^H \cap \Geo_{G}^H(X)$ is a coset language for $H$ in $G$,
as required.
\end{proof}

\section{Automaticity for graphs of relatively hyperbolic groups}
\label{sec:relhyp}

In this section, we prove that certain relatively hyperbolic groups have
strong synchronous automatic coset systems that satisfy the crossover
conditions that we need for the application of Theorem~\ref{thm:gog_coset}.
We begin in Section~\ref{subsec:relhypbackground} with some
background and an account of relevant existing results for
relatively hyperbolic groups. 
In Section~\ref{subsec:relhypxover} we discuss crossover and
\SSCA for relatively hyperbolic groups. 
Finally, in Section~\ref{subsec:synchrelhyp} we 
prove (in Corollary~\ref{cor:gogrelhypaut})
automaticity for any fundamental group of a graph of groups in
which the vertex groups are hyperbolic relative to abelian groups, the
edge groups are peripheral subgroups of the vertex groups,
and a further hypothesis holds on paths in the graph. 
Then in Corollary~\ref{cor:relhyp3mfd} we give an application
to 3-manifold groups.

\subsection{Background on relatively hyperbolic groups and biautomaticity}\label{subsec:relhypbackground}

Background and details on relatively hyperbolic groups and biautomatic
structures used in this
paper can be found in~\cite{AC,osin,ECHLPT}.

Let $G=\langle X\rangle$ be a group with finite generating
set $X$.
For any path $p$ in $\gam(G,X)$, let $\iota(p)$ denote the initial vertex,
and let $\tau(p)$ denote the terminal vertex, of $p$.
Given $\lambda \ge 1$ and $c \ge 0$,
the path $p$ is a \emph{$(\lambda,c)$--quasigeodesic}
if for every subpath $r$ of $p$, the inequality
$l(r) \le \lambda d_{\gam(G,X)}(\iota(r),\tau(r))+c$ holds.

The group $G$ is \emph{biautomatic} if there is a 
regular language $L$ for $G$ (over $X$) and a constant
$K \ge 0$ satisfying the property
that whenever $u,v \in L$ and $x,y \in X^{\pm} \cup \{\emptyword\}$
satisfy $ux =_G yv$, the paths ${}_1 y^{-1}ux$ and ${}_1 v$ 
synchronously $K$--fellow travel~\cite[Lemma 2.5.5]{ECHLPT}.

Let $\{H_\omega \mid \omega \in \Omega\}$ be a collection of subgroups of $G$, and
let $\HH=\cup_{\omega \in \Omega} (H_\omega\setminus\{\groupid\})$. 
The graph $\gam(G,X\cup\HH)$ is called the \emph{relative Cayley graph} of $G$.

Given a path $p$ in $\gam(G,X\cup\mathcal{H})$, the path $p$ 
\emph{penetrates} the coset $gH_\omega$ if $p$ contains an edge 
labelled by an element of $H_\omega$ that connects two vertices of
$gH_\omega$. 
An $H_\omega$--\emph{component} of such a path is a non-empty maximal subpath 
of $p$ that is labelled by a word in $H_\omega^*$. 
The path $p$ is said to be \emph{without backtracking} if, whenever 
$p=p'srs'p''$ with two $H_\omega$--components $s,s'$, the initial vertices 
of $s$ and $s'$ lie in different left cosets of $H_\omega$ 
(intuitively, $p$ penetrates every left coset at most once).
The path $p$ is \emph{without vertex backtracking} if each subpath
of $p$ of length at least 2 is labelled by a word that does not represent
an element of an $H_\omega$ subgroup.
In particular, if a path does not vertex backtrack,
then it does not backtrack and all components are edges.

Following~\cite{osin} we say that $G$ is \emph{hyperbolic relative to} 
$\{H_\omega\}$ if the following two conditions hold.

\begin{mylist}
   \item[(i)]\label{def:weakrelhyp} $\gam(G,X\cup\HH)$ is Gromov-hyperbolic.
	\item[(ii)]\label{def:BCP} Given any $\lambda \ge 1$,
	there is a constant $B(\lambda)$ with the following property.
        Let $p$ and $q$ be any two $(\lambda, 0)$--quasigeodesic paths
        without backtracking in $\gam(G,X\cup\HH)$ with $\iota(p) = \iota(q) = 1$
        and $d_{\gam(G,X)}(\tau(p),\tau(q)) \le 1$. Then:
  \begin{mylist}
    \item[(a)] if $s$ is an $H_\omega$--component of $p$ penetrating the 
	coset $gH_\omega$, and $q$ does not penetrate $gH_\omega$, then the 
     distance between the initial and terminal vertices of $s$ in $\gam(G,X)$ 
	is at most $B(\lambda)$;
   \item[(b)] if $s$ is an $H_\omega$--component of $p$ penetrating the
  coset $gH_\omega$ and $s'$ is an $H_\omega$--component of $q$ penetrating 
	the same coset, then in $\gam(G,X)$ the distance between the initial 
	vertices of $s$ and $s'$, and the distance between the
           terminal vertices of $s$ and $s'$, are both at most 
		$B(\lambda)$.
  \end{mylist}
\end{mylist}

Property (i) is frequently called \emph{weak relative hyperbolicity} and
Property (ii) is frequently called \emph{bounded coset penetration}. 
The groups $H_\omega$ are called the \emph{peripheral subgroups} of the 
hyperbolic group $G$.

\begin{remark}\label{rmk:bcpqg}
In a finitely generated
relatively hyperbolic group $G$, bounded coset penetration also holds for 
$(\lambda,c)$--quasigeodesics with $\lambda \ge 1$ and $c \ge 0$, 
with a constant 
$B(\lambda,c)$~\cite[Theorem~3.23]{osin}. 
\end{remark}

Relatively hyperbolic groups with a finite generating set satisfy several
further fundamental properties that we shall use.

\begin{lemma}\label{lem:relhypfg}
Let $G$ be a finitely generated group hyperbolic relative to a collection
$\{H_\omega\}$ of subgroups. Then
\begin{mylist}
\item[(i)]~\cite[Corollary~2.48]{osin} there are only finitely
many groups $H_\omega$; that is, $|\Omega| < \infty$;
\item[(ii)]~\cite[Proposition~2.36]{osin} for all $\omega,\mu \in \Omega$
with $\omega \neq \mu$, the intersection $H_\omega \cap H_\mu$ is finite;
\item[(ii)]~\cite[Proposition~2.29]{osin}  each $H_\omega$ is finitely generated.
\end{mylist}
\end{lemma}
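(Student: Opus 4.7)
The plan is simply to invoke Osin's foundational results, as indicated by the citations attached to each clause of the statement; since the paper uses these as black boxes, no original argument is required, and my ``proof'' should be a reference back to \cite{osin} together with a brief indication of why the cited results apply in our setting. Specifically, I would open the proof by noting that our definition of relative hyperbolicity (weak relative hyperbolicity plus bounded coset penetration in the relative Cayley graph) is exactly Osin's, so his structural theorems are directly available.

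For part (i), I would cite \cite[Corollary~2.48]{osin}: the key mechanism is that $G$ admits a finite relative presentation with respect to $\{H_\omega\}$, and any finite relative generating set for $G$ already exhibits how only finitely many of the peripheral subgroups can be nontrivially involved; combined with the assumption that $|X|<\infty$, this forces $|\Omega|<\infty$. For part (ii), I would quote \cite[Proposition~2.36]{osin}: the intuitive picture is that an infinite element $h \in H_\omega \cap H_\mu$ (for $\omega\neq\mu$) gives a single edge in the relative Cayley graph that is simultaneously an $H_\omega$-component and an $H_\mu$-component, penetrating two distinct cosets of two different peripheral subgroups, which by an argument involving bounded coset penetration along short closed quasigeodesic bigons yields a contradiction once $|h|_X$ exceeds $B(1)$. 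For part (iii), I would cite \cite[Proposition~2.29]{osin}, whose proof extracts from a finite relative presentation a finite generating set for each peripheral subgroup by examining the letters of the defining relators that lie in $H_\omega$.

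The only potential obstacle in writing this out is verifying that the conventions in \cite{osin} for relative hyperbolicity coincide with those used here (in particular, Osin works with $\gam(G,X\cup\HH)$ and assumes the BCP formulation for quasigeodesics, which matches our definition modulo Remark~\ref{rmk:bcpqg}). Once this compatibility is pointed out, the three conclusions follow verbatim from Osin's cited statements, so the proof is essentially a one-line reference, which is why the excerpt records them as a single compiled \emph{lemma} rather than a theorem.
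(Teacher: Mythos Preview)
Your proposal is correct and matches the paper's approach: the lemma is stated purely as a compilation of cited results from \cite{osin}, and the paper gives no proof at all beyond the inline citations. Your additional intuitive sketches of the mechanisms behind each part are accurate but go beyond what the paper does; they would be fine to include but are not necessary.
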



\begin{definition}\label{def:factor}\cite[Construction~4.1]{AC}
	Let 
	$w$ be a word in $(X^\pm)^*$; we define the \emph{factorisation}
        of $w$ to be its decomposition as
		$w=w_0u_1w_1 \cdots u_nw_n$ where 
	\begin{mylist}
		\item[(i)] each $w_k$ is in
     $(X^\pm \setminus (\cup_{\omega \in \Omega} (X^\pm \cap H_\omega)))^*$,
	\item[(ii)]
	each $u_k$ is a nonempty word in $(X^\pm \cap H_{\omega_k})^*$ 
	for some $\omega_k \in \Omega$, 
	\item[(iii)] if $w_k=\emptyword$ and
	$x$ is the first letter of $u_{k+1}$, then $u_kx$
	is not in $(X^\pm \cap H_\omega)^*$ for any $\omega$.
	\end{mylist}

We define the \emph{derived word} $\hat{w}$ of $w$ to be the word
$\hat{w} := w_0h_1w_1 \cdots h_nw_n$ over $X^\pm \cup \HH$, where 
each $h_k$ is the element of $\HH$ represented by $u_k$
(or $h_k = \emptyword$ if $u_k=_G 1$). Similarly, if $p$ is a path in
$\gam(G,X)$ labelled by $w$, then the \emph{derived path} $\hat{p}$ is
the corresponding path in  $\gam(G,X\cup\HH)$ labelled by $\hat{w}$.
\end{definition}

Following the notation in~\cite[Definition~4.5]{AC}, 
given subsets $L_{H_{\omega}} \subseteq (X^\pm \cap H_\omega)^*$ for each 
$\omega \in \Omega$,
let $\Rel(X,\{L_\omega\}^{\pc})$ denote the set of all words $w$ in $(X^\pm)^*$
such that, in the factorisation
$w=w_0u_1w_1 \cdots u_nw_n$ of $w$,
each $u_k$ is a prefix of a word in $\cup_{\omega \in \Omega} L_\omega$.

The following result, which we state here as a lemma, 
is a combination of several results in~\cite{AC}. 
We use it to prove Proposition \ref{prop:relhyp}.

\begin{lemma}\label{lem:AC}
Let $G= \langle X_1 \rangle$ (with $|X_1|<\infty$) be hyperbolic relative to a
family of subgroups $\{H_\omega\}_{\omega \in \Omega}$.
Then there exist constants $\lambda \ge 1$ and $c \ge 0$
and a  finite subset $\HH'$ of 
$\HH = \cup_{\omega \in \Omega} (H_\omega \setminus \{\groupid\})$ such that, 
whenever $X$ is a finite set satisfying
\begin{mylist}
\item[(i)] $X_1 \cup \HH' \subseteq X \subseteq X_1 \cup \HH$, and
\item[(ii)] for all $\omega \in \Omega$, 
the group $H_\omega$ has a geodesic biautomatic structure over $H_\omega \cap X$
with language $L_{H_\omega}$,
\end{mylist}
the following hold.
\begin{mylist}
\item[(a)] For every $\omega,\mu \in \Omega$ with $\omega \neq \mu$,
the intersection $H_\omega \cap H_\mu$ is contained in 
$X$.
\item[(b)] For every $\omega \in \Omega$, the set
$X \cap H_\omega$
generates $H_\omega$.
\item[(c)] For any word $w \in \Geo(G,X)$, the word
$\hat{w} \in (X^\pm \cup \HH)^*$ derived from $w$ labels a
$(\lambda,c)$--quasigeodesic path in $\gam(G,X \cup \HH)$ without
vertex backtracking.
\item[(d)] For every $\omega \in \Omega$, if $w \in \Geo(G,X)$
represents an element of $H_\omega$, then $w \in \Geo(H_\omega,X \cap H_\omega)$. 
\item[(e)] The group $G$ has a biautomatic structure over $X$
with language $\Geo(G,X) \cap \Rel(X,\{L_{H_\omega}\}^{\pc})$.
\end{mylist}
\end{lemma}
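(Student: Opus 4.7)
The approach is to bundle together several components of Antolin and Ciobanu's analysis of relatively hyperbolic groups into a single uniform statement. I would construct $\HH'$ as a finite union of three pieces, each of which is finite by Lemma~\ref{lem:relhypfg}: a finite generating set for each peripheral subgroup $H_\omega$ (using part~(iii)); all elements of each pairwise intersection $H_\omega \cap H_\mu$ for $\omega \neq \mu$ (using part~(ii)); and any additional finite set of peripheral elements required by the AC construction to ensure the quasigeodesic and biautomaticity conclusions hold. Since $\Omega$ is finite by part~(i), this union is finite.

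With $\HH'$ so defined, parts (a) and (b) are immediate from the inclusion $\HH' \subseteq X$: every pairwise intersection lies in $X$, and $X \cap H_\omega$ contains a finite generating set of $H_\omega$. Part~(d) then follows by a short geodesicity argument: if $w \in \Geo(G,X)$ represents $h \in H_\omega$ and some word $w'$ over $(X \cap H_\omega)^\pm$ represents $h$ with $|w'| < |w|$, then $w'$ is also a word over $X^\pm$ strictly shorter than $w$, contradicting $w \in \Geo(G,X)$. Hence $w$ is geodesic in $H_\omega$ relative to $X \cap H_\omega$.

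The substance of the lemma is concentrated in parts (c) and (e). For (c), I would invoke AC's factorisation machinery: the factorisation $w = w_0 u_1 w_1 \cdots u_n w_n$ of a word $w \in \Geo(G,X)$ has each peripheral block $u_k$ geodesic in $H_{\omega_k}$ over $X \cap H_{\omega_k}$ by (d), and the derived path $\hat w$ replaces each $u_k$ by a single $\HH$-edge. The AC analysis of such derived paths in the relative Cayley graph then yields uniform quasigeodesic constants $\lambda, c$, and vertex backtracking is excluded because any backtracking subpath would expose a shorter $X$-representative for the element of some $H_\omega$ traversed between two of its components, contradicting geodesicity of $w$. For (e), one combines the regular language $\Rel(X,\{L_{H_\omega}\}^{\pc})$ with the peripheral biautomatic structures $L_{H_\omega}$ and with bounded coset penetration to verify a two-sided synchronous fellow traveller condition on $\Geo(G,X) \cap \Rel(X,\{L_{H_\omega}\}^{\pc})$, which is precisely the content of the relevant theorem in \cite{AC}.

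The main obstacle will be establishing uniformity in $X$: both the constants $\lambda, c$ and the set $\HH'$ must be independent of the particular admissible $X$ chosen between $X_1 \cup \HH'$ and $X_1 \cup \HH$. This is handled by observing that enlarging $X$ within $X_1 \cup \HH$ can only shorten geodesics and collapse peripheral subwords into shorter strings, which preserves (indeed strengthens) the quasigeodesic estimate and the fellow traveller constants once the peripheral components are already single edges in $\gam(G,X \cup \HH)$. Fixing those constants via any one admissible choice (for instance, $X = X_1 \cup \HH'$) therefore furnishes values that work uniformly for every larger $X$ in the permitted range.
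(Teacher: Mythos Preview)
Your construction of $\HH'$ and your treatment of (a), (b), (c), (e) are essentially the same as the paper's: the paper also builds $\HH'$ as a finite union containing the pairwise intersections $H_\omega \cap H_\mu$, a finite generating set for each $H_\omega$, and the finite sets supplied by \cite[Lemma~5.3]{AC} and \cite[Theorem~7.6]{AC}, and then reads off (a)--(b) and quotes Antolin--Ciobanu for (c) and (e).

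There is a genuine gap in your argument for (d). Your ``short geodesicity argument'' only shows that $|w| \le |h|_{X \cap H_\omega}$ for any $h \in H_\omega$ represented by $w \in \Geo(G,X)$; it does \emph{not} show that $w$ is itself a word over $(X \cap H_\omega)^\pm$. A priori a geodesic over $X$ representing an element of $H_\omega$ might use letters outside $H_\omega$, so one cannot speak of $w$ being geodesic in $H_\omega$ at all without first proving $w \in ((X \cap H_\omega)^\pm)^*$. The paper deduces (d) \emph{from} (c), not independently: since the derived word $\hat w$ labels a path without vertex backtracking, and that path begins and ends in $H_\omega$, the path can have length at most~$1$; hence $w$ has a single factor in its factorisation, and that factor is a word over $X^\pm \cap H_\omega$. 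Once $w$ is known to lie in $((X \cap H_\omega)^\pm)^*$, your length comparison finishes the job. So your order of dependence should be reversed: (d) comes after (c), and requires it.
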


\begin{proof}
By Lemma~\ref{lem:relhypfg}, there are 
finitely many peripheral subgroups, and they have pairwise
finite intersections; hence the subset 
$\HH_1 := \cup_{\omega \neq \mu \in \Omega} 
  (H_\omega \cap H_\mu \setminus \{\groupid\})$ of $\HH$ is finite.
It also follows from Lemma~\ref{lem:relhypfg} that each peripheral subgroup
$H_\omega$ has a finite generating set $Y_\omega$, and
so the subset $\HH_2 := \cup_{\omega \in \Omega} Y_\omega$ of $\HH$
is finite.  
Let $\HH_3$ be the finite subset $\HH'$ of~\cite[Lemma~5.3]{AC},
and let $\HH_4$ be the finite subset $\HH'$ of~\cite[Theorem~7.6]{AC}.
Then the finite subset $\HH' := \cup_{i=1}^4 \HH_i$ satisfies (a)--(b),
and (c) and (e) follow  from the two results of Antolin and Ciobanu.
Suppose that $w \in \Geo(G,X)$ represents an element of $H_\omega$,
and let $w=w_0u_1w_1 \cdots u_nw_n$ be the factorisation
of $w$.  Since, by (c), the word $\hat{w}$ derived from
$w$ has no vertex backtracking, the word $\hat{w}$ must have length
at most $1$; hence $w \in (X^\pm \cap H_\omega)^*$, proving (d). 
\end{proof}

\subsection{Crossover properties for relatively hyperbolic groups}\label{subsec:relhypxover}

In this section we use Lemma~\ref{lem:AC} to show that a group that is
hyperbolic relative to geodesically biautomatic subgroups is coset automatic 
relative to each peripheral subgroup with maximal crossover.
We note that a similar but weaker
\SSCA result is shown in~\cite[Theorem~5.4]{BHS}.

\begin{proposition}\label{prop:relhyp}
Let $G=\langle X_1 \rangle$ (with $|X_1| < \infty$) 
be a group that is hyperbolic relative to subgroups
$\{H_\omega \mid \omega \in \Omega\}$. Suppose that, for each $\omega$,
any finite generating set for $H_\omega$ can be extended to one over which
$H_\omega$ has a geodesic biautomatic structure.
Let $\omega_0 \in \Omega$, and let $H:= H_{\omega_0}$.
Then there are constants $\lambda \ge 1$ and $c \ge 0$ 
and a finite generating set $X$ for $G$ satisfying the following.
\begin{mylist}
\item[(1)] The set $X$ satisfies properties (a)--(e) of Lemma~\ref{lem:AC},
and hence the subgroup $H$ is generated by $Y:=X \cap H_{\omega_0}$.
\item[(2)] The pair $(G,H)$ is strongly synchronously coset automatic with respect
to a coset language 
$L^H$
satisfying 
$
       L^H \subset \Geo(G,X) \cap
       [(X^{\pm})^* \setminus Y^\pm(X^{\pm})^*],
       $
       the only representative in $L^H$ of the identity coset is $\emptyword$,
       and each element $g \in G$ is represented by a 
       word $y_gz_g \in \Geo(G,X)$ 
       with $y_g \in (Y^\pm)^*$ and $z_g \in L^H$.
\item[(3)] For all $\omega \in \Omega$ the language
$L^H$ has maximal crossover with respect to $(Y,X \cap H_{\omega})$.
\end{mylist}
\end{proposition}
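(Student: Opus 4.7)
I would apply the Antolin--Ciobanu machinery of Lemma~\ref{lem:AC} to obtain a biautomatic structure for $G$ over a carefully chosen generating set $X$, and then take the coset language $L^H$ to consist of exactly those words of this biautomatic structure that do not begin with a letter of $Y^\pm$ (together with the empty word). The required fellow-traveller and crossover properties for $L^H$ will be deduced by transferring information from the ambient biautomatic structure using bounded coset penetration in the relative Cayley graph and the isometric embedding of $H$ into $G$.

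\textbf{Setup and definition of $L^H$.} Using the hypothesis that every finite generating set of each $H_\omega$ extends to one admitting a geodesic biautomatic structure, I would pick for each $\omega$ a generating set extending $\HH'\cap H_\omega$ that carries such a structure $L_{H_\omega}$, and take $X$ to be the union of $X_1$ with these generating sets, so that $X_1\cup\HH'\subseteq X\subseteq X_1\cup\HH$. Lemma~\ref{lem:AC} then supplies the constants $\lambda, c$, establishes (1), and produces the biautomatic language $L_G := \Geo(G,X)\cap\Rel(X,\{L_{H_\omega}\}^{\pc})$ for $G$; in particular $Y := X\cap H$ generates $H$, and by part~(d) the inclusion $H\hookrightarrow G$ is an isometric embedding for the word metrics. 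I would then set
\[
L^H := \bigl(L_G \cap ((X^\pm)^* \setminus Y^\pm (X^\pm)^*)\bigr) \cup \{\emptyword\},
\]
which is regular as an intersection of regular languages, is contained in $\Geo(G,X)$ by construction, and contains no nonempty word beginning with $Y^\pm$.

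\textbf{Coset-representative and factorisation properties.} To see that $L^H$ contains a representative of every right coset of $H$, take any $g\in G$ and its shortlex representative $\sigma(g)\in L_G$, and write $\sigma(g) = y_g z_g$ by splitting off the maximal initial block $y_g\in (Y^\pm)^*$. A direct inspection of the defining $\Rel$-factorisation shows that $z_g$ remains in $L_G$, hence lies in $L^H$; then $y_g$ represents an element of $H$ and $z_g$ represents an element of $Hg$, simultaneously giving the coset-representative property and the factorisation demanded by~(2). The uniqueness of $\emptyword$ as the representative of the trivial coset follows from Lemma~\ref{lem:AC}(d): a nonempty $z\in L^H$ representing an element of $H$ would be a geodesic of $G$ lying in $H$, and hence a word in $(Y^\pm)^*$, contradicting that $z$ does not begin with $Y^\pm$.

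\textbf{Fellow-travelling and maximal crossover.} The main obstacle is to establish the strong synchronous fellow-traveller property for $L^H$ and the maximal crossover condition~(3); both will be handled by bounded coset penetration in the relative Cayley graph $\gam(G,X\cup\HH)$. For the fellow-traveller property, given $v, w\in L^H$ and $h\in H$ with $d_{\gam(G,X)}(w, hv)\le 1$, the derived paths $\hat w$ and $\hat h\hat v$ are $(\lambda,c)$-quasigeodesics without vertex backtracking by Lemma~\ref{lem:AC}(c); since $w$ does not begin with $Y^\pm$, $\hat w$ carries no $H$-component at the coset $H$ based at the vertex $1$, whereas $\hat h\hat v$ does via $\hat h$. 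BCP then forces $|h|_X\le B$ for a uniform constant $B$, and the isometric embedding converts this to $|h|_Y\le B$; synchronous fellow-travelling of ${}_1w$ with ${}_h v$ in $\gam(G,X)$ then follows by applying the biautomatic fellow-traveller property of $L_G$ to $w$ and to $y_h v\in L_G$, where $y_h\in L_H$ represents $h$. The maximal crossover claim is proved by an entirely analogous BCP argument: given $u, v\in L^H$ with $u\notin H$, $g\in\langle X\cap H_\mu\rangle$, and $ug = hv$, the hypotheses $u\notin H$ and $u\notin Y^\pm(X^\pm)^*$ together prevent $\hat u\hat g$ from placing an $H$-component at the coset $H$ based at vertex $1$, while $\hat h\hat v$ does carry such a component, so BCP again bounds $|h|_X$ and hence $|h|_Y$ by a constant depending only on $|g|_{X\cap H_\mu}$ through the quasigeodesic constants.
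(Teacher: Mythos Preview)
Your overall plan matches the paper's almost exactly: choose $X$ via Lemma~\ref{lem:AC}, set $L^H$ equal to the biautomatic language stripped of words starting in $Y^\pm$, and use bounded coset penetration to control the element $h\in H$. The setup, the regularity of $L^H$, the factorisation $y_gz_g$, and the uniqueness of $\emptyword$ are all fine and done just as in the paper.

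The gap is in your invocation of BCP. Bounded coset penetration applies to a pair of quasigeodesics \emph{without backtracking}, and you have not checked this hypothesis for the second path in either argument. In the fellow-traveller step you assert that $\hat h\hat v$ is a $(\lambda,c)$-quasigeodesic without vertex backtracking ``by Lemma~\ref{lem:AC}(c)'', but that lemma applies only to derived words of \emph{geodesics over $X$}, and you have not shown that any word underlying $\hat h\hat v$ is an $X$-geodesic. In fact the concatenation can backtrack: if $h$ happens to lie in $H\cap H_\mu$ for some $\mu\neq\omega_0$, the single edge $\hat h$ may sit in the same $H_\mu$-coset as the first component of $\hat v$. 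The paper handles this by a case split (if $h$ lies in some other peripheral then $|h|_X\le 1$ since those finite intersections are forced into $X$) together with a separate lemma establishing that $\widehat{yv}$ is $(\lambda,c+\lambda+1)$-quasigeodesic without backtracking when $h$ lies only in $H_{\omega_0}$.

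The crossover argument has the same defect, and here it is more serious. The path ${}_1\hat u\, g$ (with $g\in H_\mu$ viewed as a single $\HH$-letter) genuinely \emph{can} backtrack: nothing prevents the final component of $\hat u$ from being an $H_\mu$-edge in the same coset as the appended edge $g$. The paper deals with this explicitly, showing that if backtracking occurs then the last factor $s_n$ of $u$ must absorb $g$ into a single element $h'\in H_\mu$, producing a shorter non-backtracking path $p''$ to which BCP can then be applied. Your proposal skips this, so the bound on $|h|$ is not established. Finally, your phrasing ``a constant depending only on $|g|_{X\cap H_\mu}$'' is at odds with the definition of maximal crossover, which requires a bound independent of $g$; the point of treating $g$ as a single $\HH$-letter is precisely that the quasigeodesic constants become $(\lambda, c+\lambda+1)$ regardless of $|g|_{X\cap H_\mu}$, so the BCP constant $B(\lambda,c+\lambda+1)$ is uniform.
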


We note that the condition on finite generating sets of the $H_\omega$ holds
when each subgroup $H_\omega$ is either virtually abelian
(by \cite[Prop 10.1]{AC}) or hyperbolic.

\begin{proof}
Given the finite generating set $X_1$ of $G$,
let $\lambda \ge 1$ and $c \ge 0$ be the constants
and let $\HH'$ be the subset of $\HH$ from (the proof of) Lemma~\ref{lem:AC}.  
Let $X_2 := X_1 \cup \HH'$.
For each $\omega \in \Omega$, the set
$X_2 \cap H_\omega$ generates $H_\omega$ (by Lemma~\ref{lem:AC}(b)),
and so there is another finite generating set
$Y_\omega \supseteq X_2 \cap H_\omega$ over which
$H_w$ has a biautomatic structure, with a language $L_{H_\omega}$.
Let $X:= X_2 \cup (\cup_{\omega \in \Omega} Y_\omega )$.
Then $X_1 \cup \HH' \subseteq X \subset X_1 \cup \HH$.
Moreover, since $H_\omega \cap H_\mu \subseteq \HH' \subseteq X_2$ for all
$\omega \neq \mu$, 
we have $X \cap H_\omega = Y_\omega$ for
all $\omega$.  Now $X$ is a  finite generating set for $G$
satisfying (i)--(ii) of Lemma~\ref{lem:AC}, and so 
properties (a)--(e) of that lemma hold, which proves (1).

Let $H := H_{\omega_0}$ and $Y := X \cap H_{\omega_0}$.  Let 
\[
L := \Geo(G,X) \cap \Rel(X,\{L_{H_\omega}\}^{\pc}),
\]
be the language
of the biautomatic structure for $G$ over $X$ (from Lemma~\ref{lem:AC}(e)).
Finally, let 
\[
L^H := L \cap [(X^{\pm})^* \setminus Y^\pm(X^{\pm})^*];
\]
that is, $L^H$ is the set of
words in the geodesic biautomatic structure for $G$ that do not begin
with a letter in $Y^\pm$. Since $L$ is regular, and the class
of regular languages is closed under intersection,
complementation, and concatenation, the language $L^H$ is also regular.

For any element $g \in G$, there is a word 
$w \in L \subseteq \Geo(G,X)$ representing $g$, and we can write
$w = y_gz_g$ where $y_g$ is the maximal
prefix of $w$ lying in $(Y^\pm)^*$ and $z_g$ does not start
with a letter in $Y^\pm$.  The factorisation of $w$ is 
$y_g$ followed by the factorisation of $z_g$; in particular,
the suffix $z_g$ of $w$
is also a geodesic over $X$ for which the components lie 
in the prefix closures of the geodesic biautomatic structures 
of the component subgroups, and so $z_g \in L^H$.
Moreover, $z_g$ is a representative in $L^H$ of the coset $Hg$.
Thus $L^H$ is a coset language for $(G,H)$.

Let $w \in L^H$ be a representative of the identity coset; that is,
$w \in H$.  Then it follows from Lemma~\ref{lem:AC}(d) that 
$w \in \Geo(H,Y) \subset (Y^\pm)^*$, but no word in $L^H$ begins
with a letter in $Y^\pm$.  Thus $w = \emptyword$ in this case.

Before proving that the language $L_H$ satisfies the requisite
fellow traveller and crossover properties, we prove two lemmas.

\begin{lemma}\label{lem:lem1}
 Let $v \in L^H$ and let $\hat{v}$ be the derived word
defined in Definition~\ref{def:factor}. Then any path in
$\gam(X \cap \HH)$ labelled by $\hat{v}$ is a $(\lambda,c)$--quasigeodesic
that does not vertex backtrack, and no such path of the form
${}_h \hat{v}$ with $h \in H$ penetrates the coset $H$.
\end{lemma}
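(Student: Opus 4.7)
The first assertion is immediate from Lemma~\ref{lem:AC}(c): since $v \in L^H \subseteq \Geo(G, X)$, any path in the relative Cayley graph labelled by $\hat{v}$ is already known to be a $(\lambda, c)$--quasigeodesic that does not vertex backtrack.

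For the second assertion my plan is to argue by contradiction. Suppose that for some $h \in H$ the path ${}_h \hat{v}$ penetrates $H$ via an edge $e$. The letters of $\hat{v}$ come in two classes: letters occurring inside some piece $w_j$, which by Definition~\ref{def:factor}(i) lie in $X^\pm \setminus \bigcup_\omega (X^\pm \cap H_\omega)$ and so cannot label an edge whose label is an element of $H$; and the letters $h_j \in \HH$, each representing the group element $u_j \in H_{\omega_j}$. Thus $e$ must be labelled by some $h_i$ with $h_i \in H$ as a group element, and with starting vertex $h p_i \in H$, where I write $p_i := w_0 u_1 w_1 \cdots u_{i-1} w_{i-1}$ for the prefix of $v$ ending just before $u_i$. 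Since $h \in H$, this forces $p_i \in H$, so the prefix $\hat{p}_i h_i$ of $\hat{v}$ represents an element of $H$.

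The crux is a length bound on this prefix. If $\hat{p}_i h_i$ had length at least $2$ as a word over $X^\pm \cup \HH$, the corresponding subpath of the path from $1$ labelled by $\hat{v}$ would be a subpath of length $\geq 2$ whose label represents an element of $H_{\omega_0}$, contradicting the first assertion. Hence $\hat{p}_i h_i$ has length $\leq 1$. Now $v \in \Geo(G, X)$ implies that no subword $u_j$ in the factorisation of $v$ represents the identity in $G$ (deleting $u_j$ would yield a strictly shorter representative of the same element), so $h_i$ contributes a full letter, forcing $\hat{p}_i = \emptyword$.

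The empty-prefix condition forces each of $w_0, \ldots, w_{i-1}$ and $h_1, \ldots, h_{i-1}$ to be empty. If $i \geq 2$, this already gives $u_1 =_G 1$, again contradicting geodesicity. If $i = 1$, then $w_0 = \emptyword$, so $v$ starts with $u_1 \in (X^\pm \cap H_{\omega_1})^*$. The subcase $\omega_1 = \omega_0$ puts the first letter of $v$ in $X^\pm \cap H_{\omega_0} = Y^\pm$, contradicting $v \in L^H \subseteq (X^\pm)^* \setminus Y^\pm (X^\pm)^*$. In the remaining subcase $\omega_1 \neq \omega_0$, the group element $u_1$ lies in $H_{\omega_0} \cap H_{\omega_1} \setminus \{1\}$, which by Lemma~\ref{lem:AC}(a) is contained in $X$; geodesicity of $v$ then forces the word $u_1$ itself to reduce to the single generator representing it, and since this generator lies in $H_{\omega_0}$ it is in $X^\pm \cap H_{\omega_0} = Y^\pm$, once more contradicting $v \in L^H$. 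I expect this last subcase, which requires combining the peripheral-intersection bound of Lemma~\ref{lem:AC}(a) with geodesicity to collapse $u_1$ to a single $Y^\pm$-letter, to be the main obstacle; every other case reduces cleanly either to a vertex-backtracking violation against the first assertion or to an identity-representing $u_j$ in the factorisation of a geodesic.
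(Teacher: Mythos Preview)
Your proof is correct and follows the same strategy as the paper: use the no-vertex-backtracking property from Lemma~\ref{lem:AC}(c) to force the prefix of $\hat v$ up to and including the penetrating edge to have length at most one, then contradict the definition of $L^H$. Your handling of the subcase $w_0=\emptyword$, $\omega_1\neq\omega_0$ (invoking Lemma~\ref{lem:AC}(a) and geodesicity to collapse $u_1$ to a single $Y^\pm$-letter) is in fact more explicit than the paper's, which elides this step in its one-line claim that ``the first letter of $v$ cannot lie in $H$, so $r$ is nonempty''.
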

\begin{proof}
Since $L^H \subseteq \Geo(G,X)$, the first claim follows immediately from
Lemma~\ref{lem:AC}(c).
For any $h \in H$, if the path $p:={}_h \hat{v}$ were to penetrate 
the identity coset $H$,
then we could write $p=rst$, where $s$ is an edge labelled by a
letter in $H$, and the initial and terminal vertices of $s$ lie in $H$.  
However, since (by the definition of $L^H$) the first letter of $v$ cannot
lie in $H$, the path $r$ is nonempty, and so $rs$ is a path of length at
least 2 labelled by a word representing an element of $H$, contradicting the
fact that $p$ has no vertex backtracking.
So $p$ cannot penetrate the coset $H$.
\end{proof}

\begin{lemma}\label{lem:lem2}
Suppose that the word $y \in \Geo(G,X)$ represents an element
of $H = H_{\omega_0}$ that does not lie in $H_\omega$ for any
$\omega \ne \omega_0$, and let $v \in L^H$. Then the path
$p = {}_1\widehat{yv}$
in $\gam(X \cup\HH)$ labelled by the derived word $\widehat{yv}$
is a $(\lambda,c+\lambda+1)$--quasigeodesic that does not backtrack.
\end{lemma}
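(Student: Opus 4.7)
The plan is to first establish the explicit identity $\widehat{yv}=h\cdot\hat v$, where $h\in H_{\omega_0}$ is the single letter of $\HH$ representing the element of $H$ that $y$ equals; both required properties of $p={}_1\widehat{yv}$ then follow from Lemma~\ref{lem:lem1} together with a triangle-inequality estimate.

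First I would apply Lemma~\ref{lem:AC}(d) to $y$: since $y\in\Geo(G,X)$ represents an element of $H=H_{\omega_0}$, we get $y\in\Geo(H,X\cap H)=\Geo(H,Y)$, so $y$ is a word over $Y^\pm$ representing a nontrivial element $h$ that lies in no $H_\omega$ with $\omega\neq\omega_0$. Since $v\in L^H$ does not begin with a letter of $Y^\pm$, and any letter of $X$ lying in $H_{\omega_0}$ must be in $Y$, the first letter of $v$ (if $v\neq\emptyword$) does not lie in $H_{\omega_0}$. Writing $v=w_0^v u_1^v w_1^v\cdots u_n^v w_n^v$ for the factorisation of $v$, I would verify that the factorisation of $yv$ is $\emptyword,y,w_0^v,u_1^v,w_1^v,\ldots$, so that $y$ becomes the first peripheral segment of $yv$ followed by the factorisation of $v$. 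The only nontrivial point is condition~(iii) of Definition~\ref{def:factor} when $w_0^v=\emptyword$: the first letter $x$ of $u_1^v$ belongs to some $H_{\omega_1}$ with $\omega_1\neq\omega_0$ (since $x\notin Y$), so the only $\omega$ for which $yx$ could lie in $(X^\pm\cap H_\omega)^*$ is $\omega_1$; but this would force every letter of $y$ into $H_{\omega_0}\cap H_{\omega_1}$ and hence $h\in H_{\omega_1}$, contradicting the hypothesis on $y$. Thus $\widehat{yv}=h\cdot\hat v$.

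It follows that $p$ is the concatenation of a single $\HH$-edge from $1$ to $h$ with the path ${}_h\hat v$. For non-backtracking, Lemma~\ref{lem:lem1} gives that ${}_h\hat v$ does not penetrate the coset $H$, so the only $H_{\omega_0}$-component of $p$ in $H$ is the initial edge, and distinct $H_{\omega_0}$-components of $p$ therefore lie in distinct cosets. For $\omega\neq\omega_0$, every $H_\omega$-component of $p$ lies inside ${}_h\hat v$, which has no vertex backtracking by Lemma~\ref{lem:lem1} and hence no backtracking. For the quasigeodesic bound, any subpath $r$ of $p$ either avoids the initial edge (and the $(\lambda,c)$-bound from Lemma~\ref{lem:lem1} gives the result directly) or contains it, forcing $\iota(r)=1$ and $r=(h)\cdot r'$ with $r'$ a subpath of ${}_h\hat v$ starting at $h$; then $l(r)=1+l(r')\le 1+\lambda\,d(h,\tau(r'))+c\le 1+\lambda(d(\iota(r),\tau(r))+1)+c=\lambda\,d(\iota(r),\tau(r))+c+\lambda+1$, where the second inequality uses the triangle inequality in $\gam(G,X\cup\HH)$.

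The main obstacle is the factorisation analysis in the second paragraph, where the full strength of the hypothesis that $h\notin H_\omega$ for all $\omega\neq\omega_0$ is needed to rule out the case in which $y$ merges with the first peripheral block of $v$ via condition~(iii). Everything after that is a direct application of Lemma~\ref{lem:lem1} with an elementary length estimate.
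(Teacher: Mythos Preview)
Your proof is correct and follows essentially the same approach as the paper: both first establish the identity $\widehat{yv}=h\hat v$ via a factorisation analysis (using the hypothesis $h\notin H_\omega$ for $\omega\ne\omega_0$ to handle the case $w_0^v=\emptyword$), then deduce the quasigeodesic bound and non-backtracking from Lemma~\ref{lem:lem1}. The only minor difference is in the non-backtracking step: you argue directly from the non-penetration clause of Lemma~\ref{lem:lem1}, while the paper argues by contradiction using Lemma~\ref{lem:AC}(d) to show a nonempty prefix of $v$ would lie in $(Y^\pm)^*$; these are equivalent reformulations of the same observation.
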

\begin{proof}
Let $v = v_0s_1v_1 \cdots s_nv_n$ be the factorisation of $v$ and
$\hat{v} = v_0h_1 \cdots h_nv_n$.
We have $y \in \Geo(H,Y)$ by Lemma~\ref{lem:AC}(d) and,
since $y \notin (X^\pm \cap H_\omega)^*$
for any $\omega \neq \omega_0$ and
the first letter $x$ of $v$ does not lie in $Y$,
the word $yx$ is not in any $(X^\pm \cap H_\mu)^*$.
Thus the factorisation of $yv$ is $yv_0s_1v_1 \cdots s_nv_n$,
and $\widehat{yv} = h\hat{v}$, where $y$ represents $h \in H$.
Since, by Lemma~\ref{lem:lem1}, $\hat{v}$ labels a $(\lambda,c)$--quasigeodesic
path, the path $p := {}_1\widehat{yv}$ is a
$(\lambda,c+\lambda+1)$--quasigeodesic. 

Suppose that $p$ backtracks.
Then for some $\omega$ there are two $H_\omega$--components of $p$
whose initial vertices lie in the same coset of $H_\omega$ and,
since by Lemma~\ref{lem:lem1} the subpath ${}_h \hat{v}$ of $p$
is without backtracking, one of those two components must
be the first edge $e:={}_1 h$ of $p$.
By our choice of $y$, the edge $e$ is an $H$--component
of $p$ but not an $H_\omega$--component for any other index
$\omega \neq \omega_0$, and so,
for some index $i$, the edge ${}_{h \cdots v_{i-1}} h_i$ of
$p$ also has initial vertex in the same coset $H$
as the initial vertex $1$ of $e$, and $s_i =_G h_i$
represents an element of $H$.  
But then the nonempty prefix $v_0 s_1 \cdots v_{i-1} s_i$ of the
geodesic $v$ represents an element of $H$ and so, by  Lemma~\ref{lem:AC}(d),
this nonempty prefix lies in $(Y^\pm)^*$, contradicting the fact that
$v \in L^H$.
\end{proof}

Returning to the proof of Proposition~\ref{prop:relhyp},
we next apply these two lemmas to establish the fellow traveller property.
Suppose that $u,v \in L^H$, $x \in X^\pm \cup \{\emptyword\}$,
and $h \in H$ satisfy $ux =_G hv$.  Let $y \in \Geo(G,X)$ be a geodesic
representative of $h$. Then, by Lemma~\ref{lem:AC}, we have $y \in \Geo(H,Y)$.  
If $h$ is in the finite set
$\cup_{\omega \neq \omega_0} (H \cap H_\omega)$ then it follows from the
definition of the generating set $X$ of $G$ that
$y \in X^\pm \cup \{\emptyword\}$, and so $\wlen{y} \le 1$.

Suppose, on the other hand, that $h$ does not lie in $H_\omega$
for any $\omega \neq \omega_0$.  Then, by Lemma~\ref{lem:lem1} applied to $u$,
the path $p:={}_1 \hat{u}$ in $\gam(G,X \cup \HH)$
is a $(\lambda,c)$--quasigeodesic
without backtracking that does not penetrate the coset $H$.
Since increasing the constants preserves the quasigeodesic property,
$p$ is also a $(\lambda,c+\lambda+1)$--quasigeodesic.
By Lemma~\ref{lem:lem2}, the path $q:={}_1 \widehat{yv}$ is 
a $(\lambda,c+\lambda+1)$--quasigeodesic as well.  Since $h \neq \groupid$,
the path $q$ penetrates the coset $H$ in its first edge ${}_1 h$.
Moreover, the paths $p$ and $q$ both start at $1$,
and the group elements at their terminal vertices, represented by
$u$ and $yv$, are connected by a single edge labelled $x$ in $\gam(G,X)$.
Now,
by the Bounded Coset Penetration property
of Remark~\ref{rmk:bcpqg}, the distance between $1$ and $h =_G y$
in $\gam(G,X)$ is at most the constant $B(\lambda,c+\lambda+1)$.

So in either case we have
$\wlen{y} = \elen{h}{X} \le M := \max\{1,B(\lambda,c+\lambda+1)\}$.
Now, by a standard argument, if $K$ is the fellow-traveller constant
of the biautomatic structure $L$ for $G$ over $X$,
then the paths ${}_1 u$ and ${}_h v$ synchronously
$K'$--fellow travel, with $K' := MK+M$.
This completes the proof of (2).

Finally we turn to the crossover property.
Suppose that $u,v \in L^H$, $\omega \in \Omega$,
$g \in H_\omega$, and $h \in H$ satisfy $ug =_G hv$, where $u$
does not represent an element of $H$.  
Let $x$ and $y$ be elements of $\Geo(G,X)$ representing
$g$ and $h$, respectively, and note from Lemma~\ref{lem:AC} that 
$x \in \Geo(H_\omega,X \cap H_\omega)$ and $y \in \Geo(H,Y)$.  
If $h$ is in the finite set
$\cup_{\omega \neq \omega_0} (H \cap H_\omega)$ 
then, as above, we have $\wlen{y} = \elen{h}{X} \le 1$.

Suppose instead that $h$ is not in this finite set.
Then as above, Lemmas~\ref{lem:lem1} and~\ref{lem:lem2}
show that the path
$p:={}_1 \hat{u}$ is a $(\lambda,c)$--quasigeodesic without
vertex backtracking that does not penetrate the coset $H$, and
the path $q:={}_1 \widehat{yv}$ is a $(\lambda,c+\lambda+1)$--quasigeodesic
without backtracking.
Now consider the path $p' := {}_1 \hat{u}g$, where we consider $g$
to be a single letter in the generating set $\HH$.  
The path $p'$ is also a $(\lambda,c+\lambda+1)$--quasigeodesic, since
it consists of the path $p$ together with one more edge $e := {}_{\hat{u}} g$.
Since $p$ does not penetrate $H$, and the word $u$ does not represent
an element of $H$, the initial vertex
of $e$ is not in $H$, and so $p'$ also does not penetrate the coset $H$.

If the path $p'$ does not backtrack then, by the Bounded Coset
Penetration property of Remark~\ref{rmk:bcpqg}, we have
$\wlen{y} = \elen{h}{X} \le B(\lambda,c+\lambda+1)$.

Suppose instead that $p'$ does backtrack; then the final edge $e$
of $p'$ penetrates the same $H_{\omega'}$-coset $uH_{\omega'}$
as one of the edges of $p$, for some index $\omega'$, and since
$g \in H_\omega$, we may take $\omega'=\omega$.
Let $u = u_0s_1u_1 \cdots s_nu_n$ be the factorisation of $u$, and
$\hat{u} = u_0h_1 \cdots h_nu_n$, and suppose that the edge of $p$
labelled by $h_k$ penetrates the coset $uH_\omega$. Then the suffix
$s_ku_k \cdots s_nu_n$ of $u$ represents an element of $H_\omega$ and then
by Lemma~\ref{lem:AC}(d) this suffix is a word in $\Geo(H_\omega,X \cap H_w)$,
and so we must have $k=n$ and $u_n = \emptyword$.

So $s_ng$ represents an element $h' \in H_\omega$,
and the path $p''$ labelled by $u_0h_1 \cdots s_{n-1}u_{n-1}h'$
is a $(\lambda,c+\lambda+1)$--quasigeodesic without
backtracking that does not penetrate $H$, and with the same initial
and terminal vertices as $q$.  Now we can apply
Remark~\ref{rmk:bcpqg} as before to the paths $p''$ and $q$ to conclude that
$\elen{h}{X} \le B(\lambda,c+\lambda+1)$.

Hence $L^H$ has $M$-maximal crossover
with respect to $(Y,X \cap H_\omega)$,
where $M = \max\{1,B(\lambda,c+\lambda+1)\}$.
\end{proof}

\subsection{Synchronous automatic structures for graphs of relatively
hyperbolic groups}\label{subsec:synchrelhyp}

The following is now an immediate corollary of
Proposition~\ref{prop:relhyp} and
Theorems~\ref{thm:gog_coset_synch} and~\ref{thm:coset2auto}.
 
\begin{theorem}
\label{thm:relhyp_synch}
Let $\GG = (\lam=(V,\dE), \{ G_v: v \in V) \}, \{ G_e: e \in \dE \},
  \{ \phi_e: e \in \dE \} )$
be a graph of groups over a finite connected directed graph 
$\lam$.
Suppose that the following conditions hold.
\begin{mylist}
\item[(i)] Each vertex group $G_v$ is finitely generated and hyperbolic 
relative to a
collection of subgroups, and each edge group $G_e$
with $\tau(e)=v$ is one of those peripheral subgroups.
\item[(ii)] Any finite
generating set of any peripheral subgroup $H$ of 
a vertex group $G_v$ can be extended to
one over which the peripheral subgroup has a geodesic biautomatic structure.
\item[(iii)] For each edge $e$, the triple $(G_e,G_\be,\phi_e)$ is 1-stable
  with respect to $(X_{\tau(e)} \cap G_e,X_{\tau(\be)} \cap G_\be)$
  where for each $v \in V$ the set $X_v$ is a finite
  generating set for $G_v$ satisfying
   properties (1)-(3) of Proposition~\ref{prop:relhyp}.  
\end{mylist}
Then for each edge $e_0 \in \dE$ the pair $(\pi_1(\GG),G_{e_0})$ is strongly
synchronously coset automatic.
Moreover, the fundamental group $\pi_1(\GG)$ is automatic. 
\end{theorem}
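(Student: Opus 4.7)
The plan is to combine Proposition~\ref{prop:relhyp} with Theorems~\ref{thm:gog_coset_synch} and~\ref{thm:coset2auto}. For each vertex $v \in V$, hypothesis (ii) ensures the biautomatic-extension hypothesis of Proposition~\ref{prop:relhyp}, which I would apply to $G_v$ taking each edge group $G_e$ (with $\tau(e) = v$) in turn as the distinguished peripheral subgroup. Inspection of the construction of $X$ in the proof of that proposition shows it depends only on $G_v$ and its collection of peripheral subgroups, not on the distinguished one, so a single generating set $X_v$ of $G_v$ works for every such choice; this is the $X_v$ fixed in hypothesis (iii). The proposition thereby delivers, for each edge $e$ with $\tau(e) = v$, a coset language $L_{\tau(e)}^e \subseteq (X_v^\pm)^*$ that is strongly synchronously coset automatic, satisfies the geodesic-and-prefix requirements of Theorem~\ref{thm:gog_coset_synch}(ii), and has maximal crossover with respect to $(Y_e, Y_f)$ for every edge $f$ at $v$, where $Y_e := X_v \cap G_e$.

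Next I would verify the four hypotheses of Theorem~\ref{thm:gog_coset_synch}. Conditions (i) and (ii) come directly from the output of Proposition~\ref{prop:relhyp}, and condition (iii) is hypothesis (iii) of the present theorem. The real work lies in condition (iv), which demands \emph{limited} crossover while Proposition~\ref{prop:relhyp}(3) supplies only \emph{maximal} crossover. To bridge this I would split on $u \in L_{\tau(e)}^e$, using the uniqueness clause that forces $u \in G_e$ to imply $u = \emptyword$. If $u \neq \emptyword$, then $u \notin G_e$ and maximal crossover applies directly with no length restriction on $g$. If $u = \emptyword$ and $v \neq \emptyword$, I would apply maximal crossover in reverse to the pair $(v,\emptyword)$ together with $g^{-1} \in \langle Y_f \rangle$, which is legitimate because $v \notin G_e$, and obtain $|vg^{-1}|_{Y_e}$ bounded and hence $|gv^{-1}|_{Y_e}$ bounded. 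Finally, if $u = v = \emptyword$ then $g \in G_e \cap G_f$, so either $G_e = G_f$ (and $|g|_{Y_e} = |g|_{Y_f} \leq \lambda$) or the intersection is finite by Lemma~\ref{lem:relhypfg}(ii), yielding a uniform bound.

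With the hypotheses verified, Theorem~\ref{thm:gog_coset_synch} gives strong synchronous coset automaticity of $(\pi_1(\GG), G_{e_0})$. For the final automaticity claim I would apply Theorem~\ref{thm:coset2auto}(ii): the subgroup $G_{e_0}$ is peripheral in $G_{\tau(e_0)}$, so by hypothesis (ii) it admits a geodesic biautomatic, hence synchronously automatic, structure, and the theorem then furnishes a synchronous automatic structure on $\pi_1(\GG)$. The subtlest step is the derivation of limited crossover from maximal crossover in the case $u = \emptyword$; although this reduces via the symmetry trick with $g^{-1}$ and the finite-intersection property of peripheral subgroups to a short case analysis, it is the single nontrivial point hidden behind the assertion that the result is an immediate corollary.
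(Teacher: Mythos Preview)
Your proposal is correct and follows exactly the route the paper indicates: combine Proposition~\ref{prop:relhyp} with Theorem~\ref{thm:gog_coset_synch} and then Theorem~\ref{thm:coset2auto}. The paper records the result as an ``immediate corollary'' and supplies no further argument; your case analysis deriving limited crossover from maximal crossover (splitting on whether $u$ or $v$ equals $\emptyword$, and invoking the finite intersection of distinct peripherals for the residual case) is a genuine detail the paper leaves implicit, and you have handled it correctly.
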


Once again, we observe that  the condition (ii) holds in particular
when each subgroup $H_\omega$ is either virtually abelian (by \cite[Prop 10.1]{AC})
or hyperbolic.

In general we cannot dispense with the $1$-stability
assumption in condition (iii) of this theorem
even in the case that the vertex groups are hyperbolic
relative to abelian subgroups, as the following example shows. 

\begin{example}
Let $G = \langle a,b,c \mid ab=ba \rangle \cong \Z^2 * \Z$. Then
$G$ is hyperbolic relative to $\{H\}$ with $H := \langle a,b \rangle$.
Let $\GG$ be the graph of groups with a single vertex,
and a single edge $e$ from the vertex group $G$ to itself (so $\pi_1(\GG)$
is  an HNN extension).
We define $\phi_e:H \to H$ by $\phi_e(a)=ab$, $\phi_e(b)=b$.
Then the resulting fundamental group is isomorphic to
$K * \Z$, where $K$ is the Heisenberg group. Since $K$ is not automatic by
\cite[Theorem 8.1.3]{ECHLPT}, the group $K * \Z$ is not automatic by
\cite[Theorem 12.1.8]{ECHLPT}.
\end{example}

However, in Corollary~\ref{cor:gogrelhypaut}
we show that, in the case when  the peripheral subgroups are abelian
and have sufficiently limited interaction,
we can dispense with the $1$-stability
assumption in Theorem~\ref{thm:relhyp_synch}.

\begin{corollary}\label{cor:gogrelhypaut}
Let $\GG$
be a graph of groups associated with a finite connected graph $\lam$
and  finitely generated vertex
groups that are hyperbolic relative to abelian subgroups,
and suppose that each edge group is peripheral 
in its adjacent vertex group.  Suppose further that 
$\lam$ contains no nonempty directed circuit $p$
for which, whenever $e \cdot f$
is a pair of consecutive edges in $p$, the edge groups
corresponding to the terminal vertex of $e$ and the initial
vertex of $f$ are equal.
Then $\pi_1(\GG)$ is an automatic group
with respect to a Higgins language of normal forms.
\end{corollary}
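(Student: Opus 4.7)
The plan is to apply Theorem~\ref{thm:relhyp_synch}. Its hypothesis (i) is exactly our hypothesis on vertex and edge groups, and hypothesis (ii) is immediate since, by \cite[Prop.~10.1]{AC}, every finite generating set of a finitely generated abelian group extends to a geodesic biautomatic generating set. The substance of the argument is to construct, for each vertex $v$, a finite generating set $X_v$ of $G_v$ satisfying properties (1)--(3) of Proposition~\ref{prop:relhyp} and such that each triple $(G_e, G_{\bar e}, \phi_e)$ is $1$-stable with respect to the pair $(X_{\tau(e)} \cap G_e, X_{\iota(e)} \cap G_{\bar e})$, which is hypothesis (iii) of Theorem~\ref{thm:relhyp_synch}.

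The key step is to choose compatible finite generating sets $Y_e \subset G_e$ so that $Y_e = Y_f$ whenever $\tau(e) = \tau(f)$ and $G_e = G_f$ as subgroups of $G_{\tau(e)}$, and so that $\phi_e(Y_e) = Y_{\bar e}$ for every $e \in \dE$; the latter equality directly yields $1$-stability. Consistency of this choice is captured by the propagation groupoid $\Pi$ whose objects are the pairs $(v,H)$ with $H = G_e$ for some edge $e$ with $\tau(e) = v$, and whose generating morphisms are the isomorphisms $\phi_e: (\tau(e), G_e) \to (\iota(e), G_{\bar e})$. A nontrivial reduced closed loop in $\Pi$ amounts to a sequence $e_1, \ldots, e_k \in \dE$ with $\tau(e_{i+1}) = \iota(e_i)$, $G_{e_{i+1}} = G_{\bar e_i}$, and $e_{i+1} \neq \bar e_i$, together with $\iota(e_k) = \tau(e_1)$ and $G_{e_1} = G_{\bar e_k}$; but then $\bar e_1 \bar e_2 \cdots \bar e_k$ is a nonempty reduced directed circuit in $\lam$ in which the consecutive-edge-group condition holds at every shared vertex, contradicting the hypothesis. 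Hence $\Pi$ has trivial isotropy, so within each connected component of $\Pi$ one may pick a finite generating set at a single representative object and transport it along the $\phi_e$'s to obtain consistent finite $Y_e$'s.

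To build each $X_v$, for every peripheral subgroup $H$ of $G_v$ that equals some $G_e$ with $\tau(e) = v$, use $Y_e$ as part of a biautomatic generating set $Y_\omega \supseteq Y_e$ of $H$ (possible since $H$ is abelian) and then carry out the construction in the proof of Proposition~\ref{prop:relhyp}. The other ingredients in that construction (an initial generating set $X_1$ of $G_v$ and the finite set $\HH'$ of peripheral elements) may force additional elements into $X_v \cap G_e$ beyond $Y_e$; absorb them by enlarging $Y_e$ and re-propagating through $\Pi$. Triviality of the isotropy guarantees that each finite set of generators has a finite propagation closure, so the iteration terminates and produces finite $X_v$ with $X_v \cap G_e = Y_e$ for every $e$. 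Theorem~\ref{thm:relhyp_synch} now yields that $(\pi_1(\GG), G_{e_0})$ is strongly synchronously coset automatic; since $G_{e_0}$ is abelian hence automatic, Theorem~\ref{thm:coset2auto}(ii) converts this into an automatic structure for $\pi_1(\GG)$ whose language is the concatenation of an automatic-structure language for $G_{e_0}$ with the Higgins coset language $L(\GG, \LL, e_0, \TT)$ of Theorem~\ref{thm:gog_coset_synch}, and this is a Higgins language of normal forms in the sense of Section~\ref{subsec:gogbackground}.

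The main obstacle is constructing the consistent generating sets $Y_e$: the combination of vertex identifications ($Y_e = Y_f$ when $G_e = G_f$) with transport conditions ($\phi_e(Y_e) = Y_{\bar e}$) is highly constrained, and without the no-directed-circuit hypothesis it would fail in examples like the Heisenberg HNN extension discussed immediately after Theorem~\ref{thm:relhyp_synch}. The heart of the proof is thus the correspondence between reduced loops in $\Pi$ and reduced directed circuits in $\lam$ with the consecutive-edge-group property, which turns the hypothesis precisely into the triviality of isotropy of $\Pi$ and thereby into the termination of the iterative construction of the $X_v$'s.
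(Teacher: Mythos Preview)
Your proposal is correct and follows essentially the same route as the paper. The paper's proof makes your propagation groupoid $\Pi$ explicit: it defines the set $\widehat P(\lam)$ of non-backtracking paths $e_1\cdots e_k$ with $G_{e_i}=G_{\bar e_{i+1}}$, observes that the no-circuit hypothesis forces $\widehat P(\lam)$ to be finite, and then sets $X_v:=X_{v,2}\cup\bigcup_{p\in\widehat P(\lam),\,\tau(p)=v}\phi_{\bar p}(X_{\tau(\bar p),2}\cap G_{\bar p})$ in one shot---this is precisely the finite propagation closure you describe, so no genuine iteration is needed. The paper then verifies by a short case analysis that $\phi_e$ carries $X_{\tau(e)}\cap G_e$ bijectively to $X_{\iota(e)}\cap G_{\bar e}$; your groupoid framing makes that bijection automatic by construction, at the cost of being less explicit about why the closure is finite.
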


\begin{proof}
Let $\GG = (\lam=(V,\dE), \{ G_v: v \in V) \}, \{ G_e: e \in \dE \},
  \{ \phi_e: e \in \dE \} )$
be this graph of groups.
In~\cite[Theorem~4.3.1]{ECHLPT}, it is shown that every finitely generated
abelian group is shortlex automatic over every generating set; 
moreover, the structure is also biautomatic.

For each $v \in V$, let $X_{v,1}$ be a finite generating set of $G_v$,
let $\{H_{v,\omega} \mid \omega \in \Omega_v\}$ be the collection
of peripheral subgroups for $G_v$, and let 
$\HH_v := \cup_{\omega \in \Omega_v} (H_{v,\omega} \setminus \{\groupid\})$.
Let $\HH'_v$ be the finite subset of $\HH_v$ associated to
$G_v$ and $X_{v,1}$ from Lemma~\ref{lem:AC}, and let 
$X_{v,2} := X_{v,1} \cup \HH'_v$.
Then for each $\omega \in \Omega_v$, the set
$X_{v,2} \cap H_{v,\omega}$ generates the group $H_{v,\omega}$.

Let $\widehat P(\lam)$ be the set of all directed paths
in $\lam$ of the form $p=e_1 \cdots e_k$ such that
$e_{i+1} \neq \be_i$ and
$G_{e_i}=G_{\be_{i+1}}$ for all $1 \le i \le k-1$; that is,
the path $p$ in $\lam$ does not backtrack and
the (peripheral) edge subgroups in the vertex group 
$G_{\tau(e_i)}=G_{\iota(e_{i+1})}$
corresponding to the edges $e_i$ and $\be_{i+1}$ are the same for all $i$.
The hypotheses show that the set $\widehat P(\lam)$
is a finite set.

For any $p=e_1 \cdots e_k \in \widehat P(\lam)$, let
$\bar p := \be_k \cdots \be_1$ be the reverse path,
let $G_p:=G_{e_k}$,
and define $\phi_p: G_{e_k} \rightarrow G_{\bar{e_1}}$
by $\phi_p:=\phi_{e_1} \circ \cdots \circ \phi_{e_k}$.
Note that the hypotheses show that 
$G_{\be_1} \neq G_{e_k}$.
Also define $Y_p := 
\phi_{\bar{p}}(X_{\tau(\bar{p}),2} \cap G_{\bar{p}})$.

For each vertex $v$ of $\lam$, let 
\[
X_{v} := X_{v,2} \cup (\cup_{p \in \widehat P(\lam), \tau(p) =v} Y_p).
\]

Now let $e$ be any edge of $\lam$, and let $v:=\tau(e)$ and $\widetilde v:=\iota(e)$.
The set $X_{v} \cap G_e$ is again a generating
set for the peripheral subgroup $G_e$, over which $G_e$ is geodesic biautomatic.
The proof of Proposition~\ref{prop:relhyp} shows that the 
generating set $X_{v}$ of $G_{v}$ 
satisfies properties (1)-(3) (with respect to the pair $(G_v,G_e)$)
of that Proposition.

The fact that
$H_{v,\omega} \cap H_{v,\mu} \subseteq X_{v,2}$ for all distinct
$\omega,\mu \in \Omega_v$ implies that
\begin{eqnarray*}
X_{v} \cap G_e 
&=& 
\left(X_{v,2} \cup (\cup_{p \in \widehat P(\lam), \tau(p) =v} Y_p)\right) \cap G_e\\
&=&
(X_{v,2} \cap G_e) \cup Y_e \cup  
  (\cup_{p \in \widehat P(\lam) \setminus \{e\}, G_p=G_e} Y_p),
\end{eqnarray*}
and similarly
\begin{eqnarray*}
X_{\widetilde v} \cap G_\be 
&=&
(X_{\widetilde v,2} \cap G_\be) \cup Y_\be \cup 
  (\cup_{q \in \widehat P(\lam) \setminus \{\be\}, G_q=G_e} Y_q).
\end{eqnarray*}
Now $\phi_e(X_{v,2} \cap G_e) =Y_\be$ and 
$\phi_e(Y_e) = \phi_e(\phi_\be(X_{\widetilde v,2} \cap G_\be))
  =X_{\widetilde v,2} \cap G_\be$.
Suppose that $p \in \widehat P(\lam) \setminus \{e\}$ satisfies $G_p=G_e$.
If the last edge of the path $p$ is $e$, then we can write
$p=q \cdot e$ for some path $q \in \widehat P(\lam) \setminus \{\be\}$ 
satisfying
$G_q=G_\be$, and so 
$$
\phi_e(Y_p)=\phi_e(\phi_{\bar p}(X_{\tau(\bar{p}),2} \cap G_{\bar{p}}))
=\phi_e((\phi_{\be} \circ \phi_{\bar q})(X_{\tau(\bar{p}),2} \cap G_{\bar{p}}))
= Y_q.
$$
On the other hand, if the last edge of $p$ is not $e$, then
the path $q := p \cdot \be$ lies in $P(\lam) \setminus \{\be\}$ 
and satisfies $G_q=G_\be$, and
the argument in the previous sentence shows that
$\phi_\be(Y_q)=Y_p$; hence $\phi_e(Y_p) = Y_q$.
Hence $\phi_e$ maps $X_{v} \cap G_e$ to $X_{\widetilde v} \cap G_\be$.
Similarly $\phi_\be$ maps $X_{\widetilde v} \cap G_\be$ to $X_{v} \cap G_e$;
that is, $\phi_e$ is a bijection from the generating set
$X_{v} \cap G_e$ of $G_e$ to the generating set $X_{\widetilde v} \cap G_\be$
of $G_\be$.  Hence the triple $(G_e,G_\be,\phi_e)$ is 1-stable
  with respect to this pair of generating sets.
The result now follows from Theorems~\ref{thm:relhyp_synch} and~\ref{thm:coset2auto}.
\end{proof}

We already noted in Section~\ref{sec:intro} that the automaticity of
$\pi_1(\GG)$ in the above result was previously known, with respect to a different normal form.
in particular, it follows from
Dahmani's Combination Theorem~\cite[Theorem~0.1]{Dahmani} 
that $\pi_1(\GG)$ is hyperbolic relative to a family of abelian groups, 
and then application of~\cite[Corollary~1.8]{AC} shows that $\pi_1(\GG)$ is 
shortlex biautomatic.

We can apply Corollary~\ref{cor:gogrelhypaut} to
the construction of automatic structures for fundamental groups of
3-manifolds.  Although fundamental groups of closed
3-manifolds with JSJ decomposition pieces that do not have Nil or Sol
geometry have been shown 
by Epstein et al.~\cite[Thm.~12.4.7]{ECHLPT} to be automatic, 
the normal forms for the automatic structure are difficult to
determine from the construction in that proof.
The proofs of Theorems~\ref{thm:gog_coset} and~\ref{thm:relhyp_synch}
were partly inspired by the 
proof in~\cite{BHS} that all fundamental
groups of closed 3-manifolds have the related property of
 autostackability, and as in that earlier proof,
our proofs of those theorems use the set of
Higgins normal forms 
described in Section~\ref{subsec:gogbackground}.
We now show that when the pieces of the
JSJ decomposition are hyperbolic,  the fundamental group of
the 3-manifold is also automatic over those normal forms.

\begin{corollaryA}\label{cor:relhyp3mfd}
Let $M$ be an orientable,
connected, compact 3-manifold with incompressible toral boundary
whose prime factors have JSJ decompositions containing only hyperbolic pieces.
Then the group $\pi_1(M)$ is automatic, with
respect to a Higgins language of normal forms.
\end{corollaryA}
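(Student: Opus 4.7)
The plan is to derive this from Corollary~\ref{cor:gogrelhypaut} applied to a graph of groups decomposition $\GG$ of $\pi_1(M)$ that combines the prime decomposition of $M$ with the JSJ decomposition of each prime factor. First I would decompose $M$ into prime factors $M_1,\ldots,M_k$ and, for each $M_i$, perform the JSJ decomposition into hyperbolic pieces glued along JSJ tori. This yields a finite connected graph $\lam$ whose vertices are the hyperbolic JSJ pieces $M_v$ (across all prime factors), with vertex groups $G_v := \pi_1(M_v)$, and whose edges are of two kinds: edges with $\Z^2$ edge groups coming from JSJ tori within a single prime factor, and edges with trivial edge groups arranged in a spanning tree realising the free-product structure given by the prime decomposition. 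The standard van Kampen/Bass--Serre argument identifies $\pi_1(\GG)$ with $\pi_1(M)$.

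Next I would verify the hypotheses of Corollary~\ref{cor:gogrelhypaut}. Each $G_v$ is the fundamental group of a finite-volume cusped hyperbolic 3-manifold, so it is finitely generated and hyperbolic relative to its finite collection of $\Z^2$ cusp subgroups; adjoining the trivial subgroup to this collection is harmless for the definition of relative hyperbolicity and allows trivial edge groups to qualify as peripheral. Each JSJ-torus edge group is by construction the $\Z^2$ peripheral subgroup of the adjacent piece associated with one of its boundary tori, and each connect-sum edge group is trivial, hence peripheral in the augmented sense; all peripheral subgroups involved are abelian.

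The one genuinely nontrivial step, which I expect to be the main obstacle, is verifying the no-bad-circuit condition: $\lam$ admits no nonempty non-backtracking directed circuit $p = e_1 \cdots e_k$ with $G_{e_i} = G_{\overline{e_{i+1}}}$ at every consecutive pair. Since the connect-sum edges form a tree, they contribute no circuits, so any such $p$ must lie entirely within the JSJ subgraph of a single prime factor. At any vertex $v$ in that subgraph, the directed edges incident to $v$ correspond bijectively to those boundary tori of $M_v$ that get identified under JSJ gluings (in particular, a loop edge at $v$ arises from a JSJ torus whose two sides are two distinct boundary tori of $M_v$, giving distinct peripheral subgroups). Therefore, for a non-backtracking consecutive pair $(e_i, e_{i+1})$ with $e_{i+1} \neq \overline{e_i}$, the peripheral subgroups $G_{e_i}$ and $G_{\overline{e_{i+1}}}$ of $G_v$ correspond to distinct boundary tori of $M_v$ and hence are themselves distinct. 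Thus the forbidden condition fails for every candidate circuit.

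Applying Corollary~\ref{cor:gogrelhypaut} to $\GG$ then yields that $\pi_1(M) = \pi_1(\GG)$ is automatic with respect to a Higgins language of normal forms, as required.
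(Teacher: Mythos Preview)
Your approach is correct and close to the paper's, but the handling of the prime decomposition differs. The paper applies Corollary~\ref{cor:gogrelhypaut} to each prime factor's JSJ graph of groups separately (using the stronger observation that distinct JSJ tori at a hyperbolic piece yield peripheral subgroups with trivial intersection, hence certainly distinct), obtaining automatic structures with Higgins languages $L_i$ for each $\pi_1(M_i)$; it then invokes the free-product closure theorem \cite[Theorem~12.1.4]{ECHLPT} to assemble these into an automatic structure for $\pi_1(M)$, and finally observes that the resulting free-product normal form is itself a Higgins language for the combined graph of groups you describe. You instead apply Corollary~\ref{cor:gogrelhypaut} directly to the combined graph.

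Your route is more uniform, but it requires adjoining the trivial subgroup to each peripheral collection so that the connect-sum edges qualify. This is indeed harmless in Osin's framework (the set $\HH$ and hence the relative Cayley graph are unchanged, and bounded coset penetration is vacuous for the trivial subgroup), but you should say so explicitly rather than assert it, and you should also check that nothing downstream---Lemma~\ref{lem:AC}, Proposition~\ref{prop:relhyp}, and the generating-set construction in the proof of Corollary~\ref{cor:gogrelhypaut}---breaks when one peripheral subgroup is trivial (it does not: the conditions become vacuous or trivial). The paper's two-step route sidesteps this bookkeeping entirely at the cost of quoting an external free-product result. Your verification of the no-bad-circuit condition is correct; the reduction to a single prime factor via ``connect-sum edges form a tree, so any non-backtracking circuit avoids them'' is exactly the right observation.
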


\begin{proof}
The manifold $M$ is a connected sum of finitely many prime
manifolds, $M = M_1 \# \cdots \# M_k$, and
the fundamental group $\pi_1(M)$ is the free product
of the groups $\pi_1(M_i)$.  

For each index $i$, the group
$\pi_1(M_i)$ is a fundamental group of a
graph of finitely generated 
groups that are hyperbolic relative to (free) abelian subgroups, 
over a finite connected graph $\lam_i$.
Moreover, this graph of groups satisfies the properties that 
each edge group is a peripheral subgroup in its vertex group,
and for any two edges $e,f$ of $\lam_i$ with 
the same terminal vertex $\tau(e)=\tau(f)$, 
the intersection of the corresponding edge groups is
$G_e \cap G_f = \{\groupid\}$.  Hence conditions~(i) and~(ii) of
Corollary~\ref{cor:gogrelhypaut} are satisfied,
and so $\pi_1(M_i)$ is automatic with respect to a
Higgins language $L_i$ of normal forms.

The free product $\pi_1(M)=\pi_1(M_1) \ast \cdots \ast \pi_1(M_k)$ 
is automatic with respect to 
the standard normal form set $L$ of a free product~\cite[Theorem~12.1.4]{ECHLPT},
constructed using the languages $L_i$ of normal forms for the factor groups above.
Then $\pi_1(M)$ is also the fundamental group of a graph
of groups built from the graphs of groups defining
the groups $\pi_1(M_i)$, by joining the graph $\lam_i$
to the graph $\lam_{i+1}$ by an edge whose associated
edge groups are the trivial group for each $i$, and the language $L$ is
a Higgins language for this graph of groups.
\end{proof}

\begin{remark}
For a nonorientable, connected, compact 3-manifold $M$ with
incompressible toral boundary, whose
JSJ pieces have interiors with hyperbolic geometry,
there is an orientable 2-sheeted cover $M'$ of $M$ 
satisfying the hypotheses of Corollary~\ref{cor:relhyp3mfd},
and $\pi_1(M')$ is an index 2 subgroup of $\pi_1(M)$.
Hence in this case, by~\cite[Theorem~4.1.4]{ECHLPT} and 
Corollary~\ref{cor:relhyp3mfd}, the group $\pi_1(M)$ has 
an automatic structure with a language that is the concatenation 
of the Higgins normal forms for $\pi_1(M')$ with a transversal 
for $\pi_1(M')$ in $\pi_1(M)$.
\end{remark}

\section{Synchronous automaticity when geodesics concatenate up}
\label{sec:concatenate}

In this section we introduce the property for a pair of groups $(G,H)$ that
geodesics in $G$ 
`concatenate up' from the subgroup $H$; such a pair $(G,H)$ is
known in the literature as an {\em admissible pair}.
In Section~\ref{subsec:concatmain} we study crossover
properties for shortlex automatic groups in which geodesics concatenate up
from subgroups, and use this to prove that strong synchronous coset automaticity 
is preserved by the graph of groups construction
when geodesics for all edge groups $G_e$ 
concatenate up to geodesics for their adjacent vertex groups $G_{\tau(e)}$.

Let $G=\langle X \rangle$ be a group and, for some $Y \subseteq X$,
let $H=\langle Y \rangle$ be the subgroup of $G$  generated by $Y$. 

\begin{definition}\label{def:concatenate}
We say that geodesics for $H$ over $Y$ {\em concatenate up} to geodesics for
$G$ over $X$ (or $\Geo(H,Y)$ concatenates up to $\Geo(G,X)$)
provided that whenever $w$ is a geodesic word over $Y$ and
$v_0$ is a word over $X$ that is a minimal length
representative of its coset, the word $wv_0$ is also geodesic.
\end{definition}

Note that this property implies that any element of $G$ has a geodesic 
representative of this form $wv_0$. 

The property of geodesics concatenating up has been used 
by Alonso~\cite{Alonso} and Chiswell~\cite{chiswellHNN}
to study the growth functions of amalgamated free products, HNN extensions,
and fundamental groups of graphs of groups.  
Examples of subgroups in groups with generating sets for which
geodesics concatenate up include any sub-graph product of
a graph product of groups (including a direct factor in a direct product, or
a free factor in a free product)~\cite{chiswellgp},~\cite[Prop.~14.4]{mann}.
Alonso's article~\cite{Alonso} provides many other examples.

In Section~\ref{subsec:coxartin}
we prove that Coxeter groups and 
sufficiently large Artin groups have the property of geodesics
concatenating up with respect to special 
subgroups (over the standard Coxeter and Artin generating sets), 
and  hence amalgamated products, HNN extensions,
and more generally fundamental groups of graphs of these
groups over parabolic subgroups are automatic.

\subsection{Crossover and strong sychnonous coset automaticity for graphs of groups when geodesics concatenate up}
\label{subsec:concatmain}

In order to obtain, in Theorem~\ref{thm:concat_synch}, 
\SSCA for fundamental groups of graphs
of groups in the case that geodesics concatenate up,
we begin by describing a situation that yields a
geodesic \SSCA and a $1$-limited crossover condition for a subgroup in a group.

\begin{proposition}\label{prop:concatenate}
Let $G=\langle X\mid R \rangle$,
let $H=\langle Y \rangle$ for some $Y \subset X$, and 
suppose that geodesics for $H$ over $Y$ concatenate up to geodesics for $G$ over $X$.
Suppose that $G$ is shortlex automatic with respect to some ordering of
$X^\pm$ in which all elements of $Y^\pm$ precede all
elements of $X^\pm \setminus Y^\pm$, and let the
languages $\SLex$ and $\SLex^H$ be defined with respect to that ordering. 
Then the coset language $\SLex^H$ has $1$-limited crossover with respect to
$(Y,Z)$ for any $Z \subseteq X$,
and defines a strong synchronous (shortlex) automatic coset system for
$(G,H)$.
\end{proposition}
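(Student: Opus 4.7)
The plan is to establish a factorisation lemma for shortlex representatives in $G$ induced by the pair $(H, Y)$, and then derive both the 1-limited crossover bound and the synchronous coset automaticity as consequences. The central claim is that every $g \in G$ has shortlex representative over $X$ of the form $wv$ with $w \in \SLex(H, Y) \subseteq (Y^\pm)^*$ and $v \in \SLex^H$. This immediately gives the length identity $|g|_X = |h|_Y + |v|$ whenever $g = hv$ with $h \in H$ and $v \in \SLex^H$, together with the description $\SLex^H = \{v \in \SLex : v \notin Y^\pm(X^\pm)^*\}$, from which regularity of $\SLex^H$ is immediate.

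The main obstacle is the factorisation lemma itself. The argument: given the shortlex representative $s$ of $g$, write $s = s_1 s_2$ where $s_1$ is the maximal prefix of $s$ in $(Y^\pm)^*$. Applying concatenation-up with the empty coset representative (of the coset $H$) shows that $|h|_X = |h|_Y$ for every $h \in H$, so $s_1 \in \Geo(H, Y)$; moreover any lex-smaller geodesic $s_1'$ over $Y$ for the same $H$-element would yield a lex-smaller geodesic $s_1' s_2$ for $g$ over $X$, contradicting shortlex-minimality, so $s_1 \in \SLex(H,Y)$. To identify $s_2$ with the shortlex coset representative $v_0$ of $Hg$, write $s_2 = h_0 v_0$ in $G$ and consider the concatenation-up geodesic $w'_0 v_0$ for $g$, where $w'_0$ is the shortlex representative over $Y$ of $s_1 h_0$; an additive-length calculation forces $|w'_0| = |s_1| + |h_0|_Y$, and the ordering hypothesis then rules out $|w'_0| > |s_1|$ (since this would make $w'_0 v_0$ lex-smaller than $s$ at position $|s_1|+1$, where $s$ has a non-$Y^\pm$ letter and $w'_0 v_0$ has a $Y^\pm$ letter), forcing $h_0 = 1$ and $s_2 = v_0 \in \SLex^H$.

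For 1-limited crossover, take $u, v \in \SLex^H$ and $z \in X^\pm \cup \{\emptyword\}$ with $uz \in Hv$, and set $h := uzv^{-1} \in H$. The length identity applied to $uz \in G$ gives $|h|_Y + |v| = |uz|_X \leq |u|+1$, hence $|h|_Y \leq |u|-|v|+1$. Applied symmetrically to $vz^{-1} \in Hu$ (whose $H$-part is $h^{-1}$), the identity gives $|h|_Y = |h^{-1}|_Y \leq |v|-|u|+1$. Summing these inequalities yields $2|h|_Y \leq 2$, so $|h|_Y \leq 1$; since this holds for every $z \in X^\pm \cup \{\emptyword\}$, it gives 1-limited crossover with respect to $(Y,Z)$ for every $Z \subseteq X$.

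For strong synchronous coset automaticity, let $v, w \in \SLex^H$ and $h \in H$ with $d_{\gam(G,X)}(v, hw) \leq 1$, so $vy =_G hw$ for some $y \in X^\pm \cup \{\emptyword\}$. The 1-limited crossover just proved gives $|h|_Y \leq 1$, and the factorisation lemma identifies the shortlex representative of $hw \in G$ as $h'w$ where $h'$ is the shortlex representative of $h$ over $Y$, a word of length at most $1$. Both $v$ and $h'w$ lie in $\SLex$, so the shortlex fellow-traveller constant $K$ for $G$ applied to the pair $(v, h'w)$ with $vy =_G h'w$ yields $d(v(t), (h'w)(t)) \leq K$ for all $t$. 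Since $(h'w)(t)$ differs from $h \cdot w(t)$ by a positional shift of at most $|h'| \leq 1$, the paths ${}_1 v$ and ${}_h w$ synchronously $(K+1)$-fellow travel, completing the verification.
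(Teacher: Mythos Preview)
Your approach is correct and genuinely different from the paper's. There is one small gap in the justification of the factorisation lemma: when you compare $w'_0 v_0$ with $s$, you assert that the comparison is decided at position $|s_1|+1$, but you have not checked that the two words agree on positions $1,\ldots,|s_1|$. They need not: the length-$|s_1|$ prefix of $w'_0$ may well differ from $s_1$. The cleanest repair is to compare $s$ with the word $s_1\,w_{h_0}\,v_0$ instead (where $w_{h_0}$ is any $Y$-geodesic for $h_0$). This word has the same length as $s$ (from your additive calculation $|s_2|=|h_0|_Y+|v_0|$), represents $g$, and certainly agrees with $s$ on the first $|s_1|$ letters; then position $|s_1|+1$ really does decide the comparison, contradicting $s\in\SLex$. (Alternatively, your original comparison can be rescued by observing that $w'_0 \leq_{\text{lex}} s_1 w_{h_0}$, which forces the length-$|s_1|$ prefix of $w'_0$ to be lex $\leq s_1$, so either way $w'_0 v_0 <_{\text{lex}} s$.)

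As for the comparison: the paper argues directly that for $u\in\SLex^H$ and $x\in X^\pm$ one has $ux=_G v$ or $ux=_G yv$ with $v\in\SLex^H$ and $y\in Y^\pm$, by a case split on whether $ux$ is geodesic; in each branch it pushes a contradiction from concatenation-up. Your route instead isolates the structural statement that the shortlex word of \emph{every} $g\in G$ factors as $\SLex(H,Y)\cdot\SLex^H$, and extracts from it the additive identity $|g|_X=|h|_Y+|v|$; the $1$-limited crossover then drops out of a clean symmetric inequality, and the description $\SLex^H=\SLex\cap\bigl((X^\pm)^*\setminus Y^\pm(X^\pm)^*\bigr)$ makes regularity immediate. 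Your argument packages more: the factorisation and the length identity are exactly the ingredients the paper later needs for Proposition~\ref{prop:synchegs} (the paper records the length facts $||u|-|v||\le 1$ and $|u|=|v|$ in the two cases for precisely that purpose). The paper's case analysis is marginally shorter and avoids proving the general factorisation, but your approach is more transparent about why the crossover bound is exactly $1$.
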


\begin{proof}
Let $u \in \SLex^H$ and $x \in X^{\pm}$. The conclusions will follow once we
have proved that either
$ux =_G v$ with $v \in \SLex^H$ or $ux=_G yv$ with $v \in \SLex^H$ and 
$y \in Y^\pm$.
In the first case, since $u$ and $v$ are both in $\SLex^H$, we must have
$||u|-|v|| \leq 1$.  In the second case, we shall show that $|u|=|v|$.
These restrictions on $u$ and $v$ will be used later in the proof of
Proposition \ref{prop:synchegs}.

Note that $v$ or $yv$ will then be proved to be the unique representative
of $ux$ in $\SLex$, since we put all the letters of $Y^\pm$ first in the
ordering, provided that in the case where there is more than one choice for $yv$
we choose that one with $y$ earliest in the ordering of $Y^\pm$.
So the synchronous fellow travelling of the path ${}_1u$ with the path ${}_1v$
or ${}_yv$ then follows from the (synchronous) shortlex automaticity of $G$.

{\bf Case 1.}
Suppose first that the word $ux$ is not geodesic, and let $v_1 \in \SLex$
represent the element $ux$ of $G$.
Then $|v_1|$ is equal to either $|u| - 1$ or $|u|$.
We claim that $v_1 \in \SLex^H$, which will complete the proof in this case. 

If not, then let $v_0$ be the representative of $Hv_1$ in $\SLex^H$,
so $v_0 <_\SL v_1$, with $v_1 =_G hv_0$ for some $h \in H$.
Since geodesics for $H$ concatenate up to $G$, whenever $w$ is a geodesic
representative for $h$, the word $wv_0$ must be geodesic. It follows that
$|v_0|<|v_1|$.  But now $u = _G v_1x^{-1} =_G hv_0x^{-1}$ with
$|v_0x^{-1}| < |v_1| + 1 \leq |u|+1$, and so $v_0x^{-1} =_G h^{-1}u$.
Now (since geodesics concatenate up) $w^{-1}u$ must be geodesic, 
but we have $|v_0x^{-1}| < |u|+1 \leq |w^{-1}u|$, and so we have a contradiction.

{\bf Case 2.} Suppose now that $ux$ is geodesic.  Let $ux =_G v_1$ with $v_1
\in \SLex$ representing the element $ux$ of $G$. So $|v_1| = |ux|$.
If $v_1 \in \SLex^H$ then we are done. 

If not, then again let $v_0$ be the representative of $v_1$ in $\SLex^H$, so
$v_0 <_\SL v_1$ with $v_1 =_G hv_0$ for some
$h \in H$. Let $w$ be a geodesic representative of $h$.
Then, again, since geodesics concatenate up, $wv_0$ must be geodesic, of the same length as $v_1$, and so 
$|v_0| < |v_1|$ and $|w|=|v_1|-|v_0|$.

If $|v_0| = |v_1| - 1$, then $|w|=1$, so $w \in Y^\pm$, 
and so $ux =_G wv_0$ with $|v_0| = |v_1| - 1 = |u|$, which proves the result.

Otherwise, $|v_0| \leq |v_1| - 2$. 
Then $|w|\geq 2$, and
$ux =_G v_1 =_G wv_0$.
Then $v_0x^{-1}=_G w^{-1}u$, and, since geodesics concatenate up,
$w^{-1}u$ must be geodesic.
But $|w^{-1}u|>|v_0x^{-1}|$, so we have a contradiction.
This contradiction completes the proof of the proposition.
\end{proof}

\begin{theoremA}
\label{thm:concat_synch}
Let $\GG = (\lam=(V,\dE), \{ G_v: v \in V) \}, \{ G_e: e \in \dE \},
 \{ \phi_e: e \in \dE \} )$
be a graph of groups over a finite connected graph $\lam$.
Let $X_v$ and $Y_e$ be finite generating sets of the groups
$G_v$ and $G_e$, respectively.
Suppose that the
following conditions hold for each $e \in \dE$.
\begin{mylist}
\item[(i)]  $Y_e \subseteq X_{\tau(e)}$.
\item[(ii)] $\Geo(G_e,Y_e)$ concatenates up to 
$\Geo(G_{\tau(e)},X_{\tau(e)})$.
\item[(iii)] The triple $(G_e,G_\be,\phi_e)$ is 1-stable 
  with respect to $(Y_e,Y_\be)$.
\item[(iv)]  $G_{\tau(e)}$ is shortlex automatic
with respect to an ordering of $X_{\tau(e)}$ in which all letters of $Y_e^\pm$
precede all letters of $X_{\tau(e)}^\pm \setminus Y_e^\pm$.
\end{mylist}
Let $\LL$ be the set of coset languages $\SLex_{G_{\tau(e)}}^{G_e}$,
for $e \in \dE$, and let $\TT$ be any maximal tree in $\lam$.
Then, for each $e_0 \in \dE$, the pair $(\pi_1(\GG),G_{e_0})$ is strongly synchronously coset automatic, 
with the Higgins coset language $L:= L(\GG,\LL,e_0,\TT)$.
Furthermore
$L \subseteq \Geo^{G_{e_0}}$,
and the group $\pi_1(\GG)$ is automatic.
\end{theoremA}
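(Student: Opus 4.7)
The plan is to verify the hypotheses of Theorem~\ref{thm:gog_coset_synch} with the shortlex coset languages $\SLex_{G_{\tau(e)}}^{G_e}$ playing the role of $L_{\tau(e)}^{e}$, and then to invoke Theorem~\ref{thm:coset2auto} to obtain the automaticity of $\pi_1(\GG)$. Hypotheses (i) and (iii) of Theorem~\ref{thm:gog_coset_synch} coincide with hypotheses (i) and (iii) here. For the remaining conditions, we apply Proposition~\ref{prop:concatenate} to each pair $(G_{\tau(e)}, G_e)$: our hypotheses (ii) and (iv) are precisely the input to that proposition, and its conclusion tells us that $(G_{\tau(e)}, G_e)$ is strongly synchronously coset automatic with coset language $\SLex_{G_{\tau(e)}}^{G_e}$ and that this language has $1$-limited crossover with respect to $(Y_e, Z)$ for every $Z \subseteq X_{\tau(e)}$; specialising $Z = Y_f$ for each edge $f$ with $\tau(f)=\tau(e)$ gives condition (iv) of Theorem~\ref{thm:gog_coset_synch}.

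The remaining pieces of condition (ii) of Theorem~\ref{thm:gog_coset_synch} concern finer structural properties of $\SLex_{G_{\tau(e)}}^{G_e}$. Uniqueness of shortlex representatives forces $\emptyword$ to be the sole representative of the identity coset. Shortlex words are geodesic, and if $v \in \SLex_{G_{\tau(e)}}^{G_e}$ began with a letter $y \in Y_e^\pm$, then $y^{-1}v$ would be a strictly shorter representative of the same coset $G_e v$, contradicting shortlex minimality; hence $\SLex_{G_{\tau(e)}}^{G_e} \subseteq \Geo(G_{\tau(e)}, X_{\tau(e)}) \cap [(X_{\tau(e)}^\pm)^* \setminus Y_e^\pm (X_{\tau(e)}^\pm)^*]$. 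For the factorisation clause, given $g \in G_{\tau(e)}$ take $z_g$ to be the shortlex coset representative of $G_e g$, set $h := g z_g^{-1} \in G_e$, and take $y_g$ to be any $Y_e$-geodesic for $h$; the concatenating-up hypothesis then guarantees that $y_g z_g$ is a geodesic for $g$ over $X_{\tau(e)}$, as required. Theorem~\ref{thm:gog_coset_synch} now applies and yields that $(\pi_1(\GG), G_{e_0})$ is strongly synchronously coset automatic with the Higgins coset language $L = L(\GG, \LL, e_0, \TT)$.

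To establish $L \subseteq \Geo^{G_{e_0}}$, note that each $\SLex_{G_{\tau(e)}}^{G_e}$ supplies a unique representative per coset, so by Remark~\ref{rmk:higginsnormalform} the Higgins coset language $L$ supplies a unique representative of each coset of $G_{e_0}$ in $\pi_1(\GG)$. Inspecting the proof of Theorem~\ref{thm:gog_coset_synch}, every coset has at least one representative in $L \cap \Geo^{G_{e_0}}$; uniqueness then forces $L \subseteq \Geo^{G_{e_0}}$. Finally, automaticity of $\pi_1(\GG)$ follows from Theorem~\ref{thm:coset2auto}(ii) once we know that $G_{e_0}$ is synchronously automatic, and this is inherited from the shortlex automaticity of $G_{\tau(e_0)}$: applying the factorisation argument above with trivial coset representative $z_h = \emptyword$ shows that the shortlex representative in $G_{\tau(e_0)}$ of any element of $G_{e_0}$ lies in $(Y_{e_0}^\pm)^*$, whence $\SLex(G_{e_0}, Y_{e_0}) = \SLex(G_{\tau(e_0)}, X_{\tau(e_0)}) \cap (Y_{e_0}^\pm)^*$ is regular, and concatenating up guarantees that $X_{\tau(e_0)}$- and $Y_{e_0}$-distances agree on $G_{e_0}$, so the fellow traveller property transports as well. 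I expect the main point requiring care to be the factorisation and prefix clauses of Theorem~\ref{thm:gog_coset_synch}(ii); the remaining ingredients are short applications of the preceding propositions.
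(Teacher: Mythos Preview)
Your argument is correct, but it takes a different route from the paper's. The paper does \emph{not} invoke Theorem~\ref{thm:gog_coset_synch}; instead it applies Proposition~\ref{prop:concatenate} to verify the hypotheses of Theorem~\ref{thm:gog_coset} (the asynchronous result), obtaining a \SACA structure with the Higgins language $L$, and then proves $L \subseteq \Geo^{G_{e_0}}$ directly by running a cascade on an arbitrary coset geodesic and using concatenating-up together with $1$-stability to show the cascade produces a word in $L$ of the same length. Synchronicity then follows from Proposition~\ref{prop:synchsub}. Your route packages this cascade inside Theorem~\ref{thm:gog_coset_synch}, which is tidier, but then forces you to reach back into that theorem's proof to extract the fact that $L \cap \Geo^{G_{e_0}}$ is a coset language; combined with uniqueness this yields $L \subseteq \Geo^{G_{e_0}}$ without re-doing the cascade. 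Both arguments are essentially the same computation, just partitioned differently. One small wrinkle: Theorem~\ref{thm:gog_coset_synch} as stated does not name the coset language, so your sentence ``Theorem~\ref{thm:gog_coset_synch} now applies and yields \ldots\ with the Higgins coset language $L$'' overstates what the black-box gives; you only recover the language $L$ after the $L \subseteq \Geo^{G_{e_0}}$ step, since the proof of Theorem~\ref{thm:gog_coset_synch} actually produces $L \cap \Geo^{G_{e_0}}$ as the synchronous language. Your justification that $G_{e_0}$ is shortlex automatic over $Y_{e_0}$ is also a detail the paper glosses over; your claim that the $X_{\tau(e_0)}$-shortlex representative of any $h \in G_{e_0}$ lies in $(Y_{e_0}^\pm)^*$ is correct, but the cleanest proof is to take the maximal $(Y_{e_0}^\pm)^*$-prefix and observe that the remaining suffix, if nonempty, represents an element of $G_{e_0}$ and can be replaced by a $Y_{e_0}$-geodesic of the same length beginning with a smaller letter.
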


\begin{proof}
Define $v_0:=\tau(e_0)$, $H:=G_{e_0}$ and 
$X := \cup_{v \in V} X_v \cup \{ s_e: e \in \dE \setminus \dE_\TT\}$.
We apply Proposition~\ref{prop:concatenate}
in order to verify for each $e \in \dE$ that the pairs $(G_{\tau(e)},G_e)$ 
satisfy conditions (i) and (iii) of Theorem~\ref{thm:gog_coset}.
Since we are already assuming hypothesis (ii) of that theorem, we can apply
it to conclude that
the pair $(G,H)$ is \SACA, with the language as described. 

Our next step is to prove that
$L \subseteq \Geo^H$.
So suppose that $w \in L$, and let $u$ be a representative of $Hw$ of minimal
$X$-length. We create a word $\hat{u}$ 
from $u$ by inserting into $u$
symbols $s_e$ for $e \in \dE_\TT$, so that
$\hat{u} = u_0s_{e_1}u_1\cdots s_{e_k}u_k$, where $e_1\cdots e_k$ is a path within
$\lam$ that starts at $v_0$, and
where $u_0 \in (X_{\tau(e_0)}^\pm)^*$ and
$u_i \in (X_{\tau(e_i)}^\pm)^*$ for each $i$. 
Note that $\Deflation{\hat{u}}{\TT}=u$.
Next we construct an
$(\LL,\SLex_{G_{e_0}}\SLex_{G_{\tau(e_0)}}^{G_{e_0}})$--cascade 
of $u$, as follows.
We define $u'_k$ to be the shortlex minimal representative word
over $X_{\tau(e_k)}$ in the coset $G_{e_k}u_k$, and suppose that
$u_k=_{G_{\tau(e_k)}} h_ku'_k$. 
Our hypothesis (ii) ensures that $|u_k| = |u'_k| + |h_k|_{Y_{e_k}}$.
Now we define $h'_k \in G_{\be_k}$ to be the element 
$\phi_{e_k}(h_k) =_G s_{e_k}h_ks^{-1}_{e_k}$.
The 1-stability condition implies that 
$|h'_k|_{Y_{\be_k}} \leq |h_k|_{Y_{e_k}}$. 
We repeat this procedure, but using $u_{k-1}h'_k$ rather than $u_k$,
and so define elements $h_k,\ldots,h_1,h_0$, words
$u'_k,\ldots,u'_0 \in \SLex_{G_{\tau(e_j)}}^{G_{e_j}}$, and elements $h'_k,\ldots,h'_1$. 
Let $w_0$ be a geodesic word over $Y_{e_0}$ that represents $h_0$.
The deflation $u'$ of the word $w_0u_0's_{e_1}u'_1s_{e_2}u'_2\cdots s_{e_k}u'_k$,
which represents the same element as $u$, is no longer than $u$. Hence $u'$ must
be geodesic, and since the deletion of $w_0$ results in a word in the same
coset, $w_0$ must be the empty word (and so $h_0=_G 1$).
Now $u' \in L$. Since $L$ has uniqueness, we have $u'=w$.

Now synchronicity of $L$ follows by Proposition~\ref{prop:synchsub}.
Then application of Theorem~\ref{thm:coset2auto} proves that $\pi_1(G)$
is automatic.
\end{proof}

\subsection{Application to graphs of Coxeter and sufficiently large type Artin groups}
\label{subsec:coxartin}

We assume that the reader is familiar with 
the definitions of Coxeter groups and Artin groups  
(also known as Artin--Tits groups, of which Coxeter groups are natural quotients) 
and with the presentations of these
groups over their standard generating sets; 
for Coxeter groups,~\cite{Bourbaki} is a standard reference.

The following lemma is noted in~\cite[Example 1]{Alonso}, 
and an outline of the proof is given 
in~\cite[Exercise Ch.IV~{\S}1(26)]{Bourbaki}.
It is also proved in \cite[Proposition 7.11]{Antolin}.
 
\begin{lemma} \label{lem:cox}
Let $G=\langle X \rangle$ be a Coxeter group, 
defined over its standard generating set $X$, and let $H=\langle Y \rangle$
be a subgroup of $G$, for some
$Y \subseteq X$. Then geodesics for $H$ concatenate up to $G$.
\end{lemma}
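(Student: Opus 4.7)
The plan is to deduce Lemma~\ref{lem:cox} from two classical facts about the standard parabolic subgroup $H = \langle Y \rangle$ of the Coxeter group $G = \langle X \rangle$, namely:
\begin{mylist}
\item[(a)] for every $h \in H$ one has $\elen{h}{Y} = \elen{h}{X}$, i.e.\ the $Y$-length and $X$-length of elements of $H$ coincide;
\item[(b)] every right coset $Hg$ contains a (unique) minimum $X$-length representative $v_0^*$, and for any $h \in H$ this representative satisfies $\elen{h v_0^*}{X} = \elen{h}{X} + \elen{v_0^*}{X}$.
\end{mylist}
Both are standard; see e.g.~\cite[Ch.~IV]{Bourbaki}.

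Given (a) and (b), the lemma reduces to a one-line length calculation. Suppose $w$ is a geodesic word over $Y$ representing some $h \in H$, so $|w| = \elen{h}{Y}$, and suppose $v_0 \in (X^\pm)^*$ is a word of minimum length in its coset $Hv_0$, so in particular $|v_0| = \elen{v_0}{X}$ and $v_0$ is (up to reduced-word equality) a representative of the type in (b). Then
$$
|wv_0| \;=\; |w| + |v_0| \;=\; \elen{h}{Y} + \elen{v_0}{X} \;=\; \elen{h}{X} + \elen{v_0}{X} \;=\; \elen{hv_0}{X},
$$
where the third equality uses (a) and the fourth uses (b) applied to the element $hv_0 = h \cdot v_0$. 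Hence $wv_0$ is a geodesic word over $X$, which is exactly the concatenation property of Definition~\ref{def:concatenate}.

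The only real obstacle is establishing (a) and (b). Property (b) is the well-known existence and length-additivity part of the minimum-length coset representative theorem for standard parabolics of Coxeter groups. Property (a) is a direct consequence of the deletion condition: if some word $u$ over $X$ represented $h \in H$ with $|u| < \elen{h}{Y}$, then applying the deletion condition to any $Y$-expression for $h$ concatenated with $u^{-1}$ forces a shorter $Y$-expression for $h$, contradicting minimality. Since both facts are routine Coxeter-group theory, no further work is required once they are cited, and this is the only step of any substance in the proof.
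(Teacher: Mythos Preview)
Your proof is correct and takes a somewhat different route from the paper's. The paper argues directly via the Exchange Lemma, in contrapositive form: assuming $wv$ is non-geodesic with $w$ a $Y$-geodesic and $v$ an $X$-geodesic, it locates a minimal non-geodesic suffix $tw''v$ and applies the Exchange Lemma to force deletion of $t$ together with a letter of $v$, thereby exhibiting a shorter coset representative than $v$. You instead invoke the higher-level structural facts (a) and (b) about standard parabolics and reduce the lemma to a one-line length computation. Your approach is cleaner once (a) and (b) are granted, and it makes transparent that the result is really just the length-additivity property of minimal coset representatives; the paper's approach is more self-contained, needing only the Exchange Lemma rather than the full coset-representative machinery.

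One small caveat: your sketched justification of (a) via the deletion condition is not quite right as written---concatenating a $Y$-geodesic for $h$ with $u^{-1}$ and deleting letters does not obviously produce a shorter $Y$-word for $h$. The standard argument for (a) goes via the fact (itself a consequence of Tits' solution to the word problem, or the subword property) that any $X$-reduced expression for an element of $H$ already lies in $(Y^\pm)^*$. Since you are explicitly citing (a) and (b) as standard facts from \cite{Bourbaki}, this does not affect the validity of your proof, but you should either drop that parenthetical justification or replace it with the correct one.
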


\begin{proof}
The proof of the lemma uses the {\em Exchange Lemma}
\cite[Chapter IV.1.4, Lemma 3]{Bourbaki} for
Coxeter groups, which says that, in any non-geodesic word  over $X$,
we can get a shorter representative of the same group element by removing
two of the letters in the word.

We let $w$ be a geodesic word over $Y$, and 
$v$ a geodesic word over $X$ such that $wv$ is non-geodesic, and
prove that in that case $v$ cannot be of minimal length within its coset.

Let $w'v$ be a minimal non-geodesic word with $w'$ a suffix of $w$.
Since $v$ is geodesic, $w'$ is nonempty. Let $w' = tw''$ with $t \in Y^\pm$.
Then, by the Exchange Lemma, we can  remove two of the letters of the
non-geodesic word $tw''v$ to get a shorter representative of the same group
element. Since $w''v$ and $tw''$ are both geodesic, one of these
removed letters must be $t$ and the other must lie in $v$. So the result of
removing this letter from $v$ is a shorter representative of the coset $Hv$.
\end{proof}

Given a Coxeter graph $\artgraph$ (that is, a finite simple graph whose edges
are labelled by parameters $m_{ij}$ each from the set 
$(\N \setminus \{0,1\}) \cup \{ \infty \}$),
we denote by $A_\artgraph$ the associated Artin group.
Suppose that $G$ is an Artin group with standard generating set $X$,
and that the integers $m_{ij}$ are the parameters of the standard presentation
(which label the edges of $\artgraph$).
The group $G$, as well as the Coxeter graph $\artgraph$,
is said to be of {\em large type} if for all $i\neq j$, $m_{ij}\geq 3$,
and (following \cite{HR2}) of {\em sufficiently large type} if for any triple
$i,j,k$ either none of $m_{ij},m_{ik},m_{jk}$ are equal to $2$,
or all three of $m_{ij},m_{ik},m_{jk}$ are equal to $2$,
or at least one $m_{ij},m_{ik},m_{jk}$ is infinite.

The following lemma is proved in~\cite{chiswellgp} 
(see also~\cite[Prop.~14.4]{mann})
for the special case of right-angled Artin groups.
In order to prove the result for Artin groups of sufficiently large type,
we need to use knowledge of geodesics in these groups, and of a process
that reduces any word to geodesic form, which is described in
\cite{HR1, HR2}. In particular, some familiarity with the concept
of {\em critical sequences of moves} applied to words over the generators
is required in order to understand the following proof.

\begin{lemma} \label{lem:artin}
Let $G=\langle X \rangle$ be an  Artin group of sufficiently large type, 
defined over its standard generating set $X$, and let $H=\langle Y \rangle$
be a subgroup of $G$, for some
$Y \subseteq X$. Then geodesics for $H$ concatenate up to $G$.
\end{lemma}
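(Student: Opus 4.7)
\emph{Plan.} The proof will follow the template of Lemma~\ref{lem:cox} for Coxeter groups, replacing the Exchange Lemma by the geodesic reduction machinery developed in \cite{HR1, HR2} for Artin groups of sufficiently large type.

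Suppose, toward a contradiction, that $w \in \Geo(H,Y)$ and $v \in \Geo(G,X)$ with $v$ of minimal $X$-length in its coset $Hv$, and yet $wv$ is non-geodesic. Let $w'$ be a suffix of $w$ of minimal length for which $w'v$ is non-geodesic; since $v$ itself is geodesic, $w'$ is nonempty, so $w' = tw''$ for some $t \in Y^\pm$. By the minimality of $|w'|$, the word $w''v$ is geodesic, and $tw''$, being a suffix of the geodesic $w$, is also geodesic. The aim is to produce $v' \in Hv$ with $|v'| < |v|$, which will contradict the minimality of $|v|$.

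Next I would apply a critical sequence of moves from \cite{HR2} to the non-geodesic word $tw''v$: this produces a sequence of length-preserving braid and $\tau$-moves culminating in a single free cancellation $xx^{-1} \to \emptyword$, shortening the word by two. Since $tw''$ and $w''v$ are each geodesic, the cancelling pair cannot lie entirely within either half, so the two cancelled letters descend from letters on opposite sides of the boundary between $tw''$ and $v$.

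The main step, and the principal obstacle, is to reorganise the critical sequence so that the portion transported across that boundary from the $tw''$-side lies on $Y^\pm$ alone. This should follow from the observation that $tw''$ lies in $(Y^\pm)^*$ and that braid and $\tau$-moves merely permute the letters on which they act, but making it rigorous requires a careful induction on the length of the critical sequence using the classification of allowable moves in sufficiently large type from \cite{HR2}; one must in particular check that moves straddling the boundary do not introduce or rely on letters from $X^\pm \setminus Y^\pm$ on the $tw''$-side. Once verified, the reduced word can be rewritten as $w''v'$ with $v' =_G h^{-1}v$ for some $h \in H$; thus $v'\in Hv$ with $|v'| = |v|-1$, contradicting the minimality of $v$ in $Hv$.
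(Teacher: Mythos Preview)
Your setup is correct and matches the paper exactly: reduce to a minimal non-geodesic suffix $w'v = tw''v$ with $t \in Y^\pm$, where both $tw''$ and $w''v$ are geodesic. The divergence comes in the next step, and your ``main step'' is a genuine gap.

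You apply the reduction machinery to the \emph{non-geodesic} word $tw''v$ and then try to track a free cancellation across the $tw''$/$v$ boundary. You correctly flag that controlling which letters cross the boundary is the principal obstacle, and you do not resolve it: the claim that moves straddling the boundary stay on $Y^\pm$ on the left is exactly what needs proof, and ``braid and $\tau$-moves merely permute the letters on which they act'' is not true in the relevant sense (a 2-generator move on letters from $\{a,b\}$ can turn an $a$ into a $b$ and vice versa, so a move overlapping the boundary with one generator in $Y$ and one not in $Y$ could in principle push a non-$Y$ letter leftward).

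The paper sidesteps this by applying the critical-sequence machinery to the \emph{geodesic} word $w''v$ instead. Since $w''v =_G t^{-1}u$ with both sides geodesic, \cite[Prop.~3.2(1)]{HR2} (left-handed version) gives a single leftward critical sequence transforming $w''v$ into a word beginning with $t^{-1}$. One then looks at the first move $\mu_1 \to \nu_1$ that overlaps both $w''$ and $v$ (after absorbing any moves entirely inside $v$ into $v$). The clean observation is: if one of the two generators involved in this move, say $s$, were outside $Y$, then $\nu_1$ begins with $s^{\pm 1}$, and an easy induction on the remaining moves (each subsequent $\mu_i$ starts in $w''$, hence in $Y^\pm$, and ends with $s^{\pm 1}$, so $\nu_i$ again starts with $s^{\pm 1}$) shows the final word begins with $s^{\pm 1}$, contradicting that it begins with $t^{-1} \in Y^\pm$. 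Hence both generators of the boundary move lie in $Y$, so the first letter of $v$ lies in $Y^\pm$, and $v$ is not coset-minimal.

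The point is that working with a leftward critical sequence on a geodesic, aimed at producing a specific first letter $t^{-1}$, gives you a target to contradict; working with a reducing sequence on a non-geodesic word does not, and your boundary-tracking argument would need substantially more structure from \cite{HR1,HR2} than you have invoked.
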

\begin{proof}
We prove the contrapositive, as follows.
As in the proof of Lemma~\ref{lem:cox}, we
let $w$ be a geodesic word over $Y$, and $v$ a geodesic word over $X$
such that $wv$ is non-geodesic, and 
prove that in that case $v$ cannot be of minimal length within its coset.
We prove this by showing that $v$ must be equal in $G$ to a geodesic
word that starts with a letter of $Y^\pm$.

Let $w'v$ be a minimal non-geodesic word with $w'$ a suffix of $w$.
So, if $u$ is a geodesic word with $u =_G w'v$, then $|w'v| - |u| \le 2$
but, since all defining relators of Artin groups have even length,
we must have $|u| =  |w'v| - 2$.  Since $v$ is geodesic, $w'$ is nonempty.
Let $w' = tw''$ with $t \in Y^\pm$.
Then $w''v =_G t^{-1}u$ with both words geodesic.

If $w''$ is empty then $v =_G t^{-1} u$, which proves the lemma.
Otherwise, since $w$ is geodesic, $w''$ does not start with $t^{-1}$,  but it
starts with a letter in $Y^\pm$. By \cite[Prop 3.2 (1)]{HR2} (applied with
`left' in place of `right'), a single leftward critical sequence 
(a sequence of overlapping replacements of 2-generator subwords by words
of the same length on the same 2 generators) can be
applied to $w''v$ to transform it to a word starting with $t^{-1}$.
The moves in the sequence cannot all take place within $w''$ because that
would contradict $w$ being geodesic.
If some of the moves in the critical sequence take place within $v$, then we
can just change $v$ to the result of these moves. So we can assume that
the first move $\mu_1 \to \nu_1$ in the sequence overlaps both $w''$ and $v$.

We claim that the two generators involved in this move must both be in $Y$.
So suppose that one of them, $s$ say, is not.
The first letter of $\mu_1$ lies in $w''$ and hence in $Y^\pm$, and
so $\nu_1$ begins with $s$ or $s^{-1}$.
If there is a second move $\mu_2 \to \nu_2$ in the sequence, then $\mu_2$
begins with a letter in $w''$ and hence in $Y^\pm$ and ends with $s^{\pm 1}$,
so $\nu_2$ must also begin with $s$ or $s^{-1}$.
Then we see by induction that, for all moves $\mu_i \to \nu_i$ in the
sequence, $\nu_i$ begins with $s$ or $s^{-1}$ and hence, after applying the
complete sequence, the resulting word begins with $s$ or $s^{-1}$. But we know
already that it begins with $t^{-1}$ with $t \in Y^\pm$, so we have a
contradiction, which proves the claim.

Since one of the two generators involved in the first move $\mu_1 \to \nu_1$
is the first letter of $v$ or the inverse of that generator, it follows that
the first letter of $v$ is in $Y^{\pm}$, and the lemma is proved.
\end{proof}

\begin{corollary} \label{cor:artincox}
A fundamental group of a graph of groups in which each vertex group is either 
a Coxeter group or a sufficiently large type Artin group, and in which each
edge group is a special subgroup in its adjacent vertex group, is automatic.
\end{corollary}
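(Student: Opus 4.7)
The plan is to verify the four hypotheses of Theorem~\ref{thm:concat_synch} for the given graph of groups $\GG$ and then invoke that theorem directly. For each vertex $v$, take $X_v$ to be the standard Coxeter (respectively Artin) generating set of $G_v$. Because each edge group $G_e$ is a special subgroup of $G_{\tau(e)}$, there is a canonical subset $Y_e \subseteq X_{\tau(e)}$ generating $G_e$; take $Y_e$ to be this subset. This gives hypothesis~(i) at once, and hypothesis~(ii) is then precisely the content of Lemma~\ref{lem:cox} in the Coxeter case and Lemma~\ref{lem:artin} in the sufficiently large type Artin case.

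For hypothesis~(iii), 1--stability of $(G_e,G_\be,\phi_e)$ with respect to $(Y_e,Y_\be)$ reduces to checking that $\phi_e$ sends $Y_e^\pm$ bijectively onto $Y_\be^\pm$. Since both $G_e$ and $G_\be$ are special subgroups coming equipped with their standard generating sets, and the graph of groups data supplies an isomorphism between them, we arrange (as part of the natural setup for graphs of Coxeter/Artin groups) that $\phi_e$ respects these generating sets; once that is done, 1--stability is trivial. I expect this to be the most delicate point, since an abstract isomorphism between two Coxeter or Artin groups need not permute standard generators, so the claim genuinely needs the gluing data $\phi_e$ to be of the natural form — which is the intended situation, and is exactly what appears in, for example, Corollary~\ref{cor:newartin}.

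Hypothesis~(iv) follows from known shortlex automaticity results for the vertex groups relative to \emph{any} ordering of the standard generators: Coxeter groups are shortlex biautomatic by Brink--Howlett, and Artin groups of sufficiently large type are shortlex automatic by Holt and Rees. Consequently, for each edge $e$ we may independently choose an ordering of $X_{\tau(e)}^\pm$ in which the letters of $Y_e^\pm$ precede those of $X_{\tau(e)}^\pm \setminus Y_e^\pm$, and $G_{\tau(e)}$ remains shortlex automatic with respect to that ordering. (Note that different edges with the same terminal vertex may force different orderings, but this is permitted by the theorem: the coset languages $\SLex_{G_{\tau(e)}}^{G_e}$ are chosen edge by edge.)

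With all four hypotheses established, Theorem~\ref{thm:concat_synch} applies to conclude that $\pi_1(\GG)$ is automatic — indeed, strongly synchronously coset automatic over each edge subgroup, with the Higgins coset language contained in $\Geo^{G_{e_0}}$. No further argument beyond the application of that theorem is needed.
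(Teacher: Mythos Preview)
Your proposal is correct and matches the paper's intended argument: the paper gives no explicit proof of the corollary, since it is meant to follow immediately from Theorem~\ref{thm:concat_synch} once Lemmas~\ref{lem:cox} and~\ref{lem:artin} supply hypothesis~(ii), and the Brink--Howlett and Holt--Rees results supply hypothesis~(iv). You are also right to flag hypothesis~(iii) as the one place where an assumption on the gluing maps (that $\phi_e$ respects the standard generators) is implicitly being used; the paper does not spell this out in the corollary statement, but the subsequent Corollary~\ref{cor:newartin} and surrounding discussion confirm this is the intended reading.
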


We note that a fundamental group of such a graph of groups built 
only out of Coxeter groups, or only out of Artin groups, must itself be such 
a group. And conversely, by \cite{MS}, any Coxeter group that arises as the 
fundamental group of a graph of groups must arise in a similar way.

The following gives a number of examples. 
\begin{corollaryA} 
\label{cor:newartin}
Let $\artgraph$ be a Coxeter graph of sufficiently large type.
Given arbitrary subgraphs $\Lambda_1,\Lambda_2,\ldots,\Lambda_k$ 
of $\artgraph$, suppose that the
Coxeter graph $\artgraph'$ is formed by adjoining new vertices 
$v_1,v_2,\ldots,v_k$ to $\artgraph$ together with 
the following edges from each $v_i$:
\begin{mylist}
\item[] to each vertex of $\Lambda_i$, with the label $2$,
\item[] to each vertex of $\artgraph \setminus \Lambda_i$, with the label $\infty$,
\item[] to each vertex
$v_j$ with $j \neq i$, with the label $\infty$.
\end{mylist}
Then the Artin group $A_{\artgraph'}$ is automatic. 
\end{corollaryA}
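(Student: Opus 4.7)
The plan is to realise $A_{\artgraph'}$ as the fundamental group of a suitably chosen tree of groups and then invoke Theorem~\ref{thm:concat_synch}. Since $v_i$ commutes with every generator of the parabolic subgroup $A_{\Lambda_i} \le A_\artgraph$ but has no relation with $X_\artgraph \setminus \Lambda_i$ or with $v_j$ for $j \neq i$, a direct rearrangement of the defining presentation gives
\[
A_{\artgraph'} \;\cong\; A_\artgraph *_{A_{\Lambda_1}} (A_{\Lambda_1} \times \langle v_1\rangle) *_{A_{\Lambda_2}} \cdots *_{A_{\Lambda_k}} (A_{\Lambda_k} \times \langle v_k\rangle).
\]

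The naive star encoding of this identity---one central vertex with group $A_\artgraph$ and $k$ pendants carrying $A_{\Lambda_i} \times \langle v_i\rangle$---fails condition~(iv) of Theorem~\ref{thm:concat_synch}, since no single ordering of $X_\artgraph$ can place every $\Lambda_i^\pm$ first simultaneously. My idea is to decouple the orderings by introducing $k$ distinct copies of $A_\artgraph$. Take $\lam$ to be the caterpillar tree with spine vertices $m_1, \ldots, m_k$ and pendants $s_1, \ldots, s_k$ (with $s_i$ attached to $m_i$), and set $G_{m_i} := A_\artgraph$ and $G_{s_i} := A_{\Lambda_i} \times \langle v_i\rangle$. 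Each spine edge $m_im_{i+1}$ carries edge group $A_\artgraph$ with identity embeddings on both sides; each pendant edge $m_is_i$ carries $A_{\Lambda_i}$ embedded as the parabolic in $A_\artgraph$ and as the first factor in $A_{\Lambda_i} \times \langle v_i\rangle$. Amalgamation over the full vertex group is trivial, so $\pi_1(\GG) \cong A_{\artgraph'}$; but now each $m_i$ carries its own independent ordering of $X_\artgraph$, which I take to list $\Lambda_i^\pm$ first, and at $s_i$ I order $\Lambda_i \cup \{v_i\}$ with $\Lambda_i^\pm$ first.

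Conditions~(i) and~(iii) of Theorem~\ref{thm:concat_synch} are then immediate---every edge isomorphism is the identity, so $1$-stability is automatic---while~(ii) is vacuous on the spine edges, is exactly Lemma~\ref{lem:artin} for pendant edges viewed toward $m_i$, and is a routine check about minimal coset representatives in a direct product for pendant edges viewed toward $s_i$. For~(iv), the spine edges impose only the vacuous requirement $Y_e = X_{\tau(e)}$, and the pendant-edge constraints are met by construction; shortlex automaticity of $A_\artgraph$ for an arbitrary ordering of the standard generators is a consequence of the Holt--Rees analysis of geodesics in sufficiently large Artin groups~\cite{HR1,HR2}, and $A_{\Lambda_i} \times \langle v_i\rangle$ inherits shortlex automaticity (with the chosen ordering) from the standard direct-product construction applied to the sufficiently large Artin factor $A_{\Lambda_i}$ and to $\Z$. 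Theorem~\ref{thm:concat_synch} then delivers automaticity of $\pi_1(\GG) \cong A_{\artgraph'}$. The main obstacle I anticipate is organisational rather than analytic: choosing a graph-of-groups structure that decouples the incompatible ordering constraints at the different pendants, after which everything reduces to invocations of Lemma~\ref{lem:artin} together with standing facts about shortlex structures.
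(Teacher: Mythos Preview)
Your argument is correct, but it rests on a misreading of condition~(iv) of Theorem~\ref{thm:concat_synch} that leads you to a more elaborate construction than necessary. Condition~(iv) is quantified \emph{per directed edge}, not per vertex: for each $e$ one may choose an ordering of $X_{\tau(e)}$ suited to that particular $Y_e$, and different edges with the same terminal vertex may use different orderings. (This is consistent with the rest of the theorem, since $\LL$ records a separate coset language $\SLex^{G_e}_{G_{\tau(e)}}$ for each $e$.) Consequently the ``naive star'' you dismiss already satisfies~(iv), and so does the even simpler encoding the paper uses: a \emph{single} vertex carrying $A_\artgraph$ with $k$ loops, realising $A_{\artgraph'}$ as a multiple HNN extension with identity edge isomorphisms $\phi_{e_i}:A_{\Lambda_i}\to A_{\Lambda_i}$ and stable letters $s_{e_i}=v_i$.

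The paper's route is shorter because every directed edge has $\tau(e)$ equal to the single vertex $A_\artgraph$, so condition~(ii) reduces entirely to Lemma~\ref{lem:artin} and condition~(iv) to the Holt--Rees shortlex result for sufficiently large Artin groups. Your caterpillar forces you to handle two extra edge types: the trivial spine edges (harmless) and the pendant edges directed toward the leaves, for which you must separately argue that $\Geo(A_{\Lambda_i},\Lambda_i)$ concatenates up in $A_{\Lambda_i}\times\Z$ and that $A_{\Lambda_i}\times\Z$ is shortlex automatic with $\Lambda_i^\pm$ preceding $v_i^\pm$. Both claims are true and easy, so nothing is wrong; but the detour buys you nothing, since the obstacle that motivated it does not exist.
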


\begin{proof} 
The Artin group $A_{\artgraph'}$ is a multiple HNN-extension of 
$G_v=A_\artgraph$ over the subgroups $G_{e_i}=A_{\Lambda_i}$, 
where $\phi_{e_i}\colon A_{\Lambda_i}\to A_{\Lambda_i}$ is the identity map. 
Thus, this graph of groups satisfies condition (iii) of 
Theorem~\ref{thm:concat_synch}. 
Further, $X_v=V(\artgraph)^{\pm}$ and 
$Y_{e_i}=V(\Lambda_i)^{\pm}$ and so condition (i) is satisfied. 
By Lemma~\ref{lem:artin}, geodesics from $A_{\Lambda_i}$ concatenate up 
to $A_\artgraph$, and so condition (ii) is satisfied.  Condition (iv) 
follows 
from~\cite{HR2}. Thus, $(A_{\artgraph'},A_{\Lambda_1})$ is \SSCA.
Moreover, $A_{\Lambda_i}$ is also shortlex automatic, by~\cite{HR2}. 
Thus, by Theorem~\ref{thm:coset2auto}, $A_{\artgraph'}$ is 
automatic.
\end{proof}

\begin{example}
A 4-generator example is provided by extending the Artin group of
type $\widetilde{A_2}$ by one generator $y_1$, defined to commute with two 
of the existing generators. This is the Artin group defined by the Coxeter 
diagram shown in Figure~\ref{Artinfig}.
\end{example}
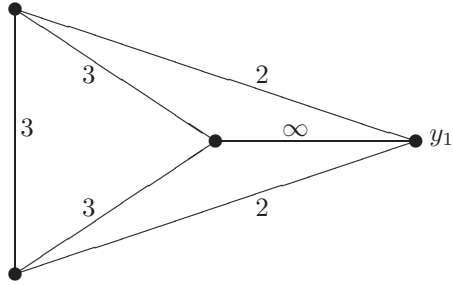
\begin{figure}
\begin{center}
\begin{picture}(100,100)
\put(0,0){\circle*{5}}
\put(75,50){\circle*{5}}
\put(150,50){\circle*{5}} \put(155,50){$y_1$}
\put(0,100){\circle*{5}}
\put(0,0){\line(0,1){100}}\put(2,52){$3$}
\put(0,0){\line(3,2){75}} \put(25,22){$3$}
\put(0,0){\line(3,1){150}} \put(90,22){$2$}
\put(0,100){\line(3,-2){75}} \put(25,72){$3$}
\put(0,100){\line(3,-1){150}} \put(90,72){$2$}
\put(75,50){\line(1,0){75}} \put(100,52){$\infty$}
\end{picture}
\end{center}
\caption{An Artin group not previously known to be automatic}\label{Artinfig}
\end{figure}

As far as the authors know, automaticity of the family
of Artin groups covered by Corollary~\ref{cor:newartin} was 
previously unknown, since this family includes groups
that are not of sufficiently large type. 
On the other hand, it was clear that (as fundamental groups of graphs of groups)
they had solvable word problem, 
though quadratic Dehn function was (probably) unknown. 

Further, it can be shown that these Artin groups satisfy Dehornoy's property H, 
introduced in \cite{Dehornoy1}, which implies that their word problem is 
solvable via padded multifraction reduction \cite[Proposition 1.14]{DHR}.

\section{Further strong synchronous coset automatic structures} \label{sec:misc} 


For our next example of a family of groups and subgroups with
limited crossover, we consider the case in which the group is abelian. 
It is proved in \cite[Theorem 4.3.1]{ECHLPT} that finitely
generated abelian groups are shortlex automatic over all finite
generating sets. 
The following proposition expands the result to coset systems relative 
to any subgroup.

\begin{proposition}\label{prop:ab}
Let $G=\langle X \rangle$ be a finitely generated abelian group, and let
$H = \langle Y \rangle$ be a subgroup. Then $(G,H)$ is strongly synchronously
coset automatic with $1$-limited crossover with respect to $Y$. Furthermore,
for any ordering of $X^{\pm}$, we can choose the coset automatic
structure to consist of the shortlex least representatives of the cosets of $H$.
\end{proposition}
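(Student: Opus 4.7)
The plan is to verify the three assertions of the proposition in sequence: $1$-limited crossover, regularity of $\SLex^H$, and the synchronous fellow traveller property, with the last being the main technical point.

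For the $1$-limited crossover with respect to $Y$, I would observe that it is essentially automatic: since $\SLex^H$ consists of unique coset representatives and $Y \subseteq H$, any $g \in \langle Y \rangle$ with $|g|_Y \le 1$ and any $u,v \in \SLex^H$ satisfying $ug \in Hv$ must have $Hu = Hug = Hv$, so uniqueness forces $u=v$, whence $|ugv^{-1}|_Y = |g|_Y \le 1$.

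For regularity, I would pass to the quotient $\bar G := G/H$, which is again a finitely generated abelian group. Using the projection $\pi\colon G \to \bar G$, the set $X$ maps to an ordered generating set for $\bar G$, and the letter-by-letter identification sending words over $X^\pm$ (interpreted in $G$) to words over $\pi(X)^\pm$ (interpreted in $\bar G$) preserves length and lexicographic order. Two words over $X^\pm$ represent the same coset of $H$ in $G$ precisely when the corresponding words over $\pi(X)^\pm$ represent the same element of $\bar G$. Consequently this identification gives a bijection between the language $\SLex^H_G$ and the shortlex language of $\bar G$ over $\pi(X)$. By~\cite[Theorem~4.3.1]{ECHLPT}, every finitely generated abelian group is shortlex automatic over any generating set, and hence $\SLex^H_G$ is regular.

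For the synchronous fellow traveller property, suppose $u,v \in \SLex^H$ and $x \in X^\pm \cup \{\emptyword\}$ satisfy $ux = hv$ in $G$ for some $h \in H$. I would first observe that $\SLex^H \subseteq \SLex_G$, since a shortlex minimizer over an entire coset is in particular a shortlex minimizer over the singleton it represents; thus $u$ and $v$ lie in the shortlex language for $G$. The synchronous fellow traveller property of $\SLex_G$ from~\cite[Theorem~4.3.1]{ECHLPT}, combined with a triangle-inequality shift by $|h|_X$, would then deliver the required fellow traveller between ${}_1u$ and ${}_hv$ in $\gam(G,X)$, provided I have a uniform bound $|h|_X \le M = M(G,H,X)$. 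The main obstacle will be obtaining this uniform bound. To produce it, I would use the structure theorem to write $G = \Z^n \oplus T$ with $T$ finite and analyse the shortlex coset representatives coordinate-by-coordinate: the transversal $\SLex^H$ is bounded in the torsion directions and in the $\Z^n$-directions complementary to the free rank of $H$ within $\Z^n$, and is ``straight'' in the directions in which $H$ has full rank, so the crossover element $h = uxv^{-1}$ obtained when a coset representative is shifted by a generator and then reduced back takes only finitely many values.
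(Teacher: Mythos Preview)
Your crossover argument is correct and cleaner than the paper's: once you note that $\SLex^H$ has unique representatives and that $Y$ generates a subgroup of $H$, the $1$-limited crossover is immediate. Your regularity argument via the quotient $\bar G = G/H$ is the same idea as the paper's, and your observation that $\SLex^H \subseteq \SLex_G$ is a pleasant simplification the paper does not use.

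The genuine gap is in the final step, the uniform bound on $|h|_X$. You correctly identify this as the main obstacle, but your structure-theorem sketch is both vague and, as written, has the geometry reversed: the transversal $\SLex^H$ is \emph{bounded} in the directions along which $H$ has full rank (each coset is an affine translate of $H \cap \Z^n$, and the shortlex representative sits in a bounded slab transverse to that) and \emph{unbounded} in the complementary directions, not the other way round. Untangling this via an arbitrary generating set $X$ not aligned with the Smith normal form basis is genuinely delicate.

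The paper avoids this by exploiting the explicit exponential form of shortlex words in abelian groups together with a fact drawn from the \emph{proof} of \cite[Theorem~4.3.1]{ECHLPT}: writing $u = x_1^{a_1}\cdots x_n^{a_n}$ and $v = x_1^{b_1}\cdots x_n^{b_n}$, the condition $d_{\bar G}(\bar u,\bar v)\le 1$ forces $\sum_i |a_i - b_i| \le B$ for a uniform constant $B$. Since $G$ is abelian, $h = uxv^{-1} =_G x\cdot\prod_i x_i^{a_i - b_i}$, so $|h|_X \le B+1$. This is exactly the missing ingredient you need; once you have it, your route through $\SLex_G$ (fellow-travel $u$ and $v$ at constant $K(B+2)$, then translate by $h$ using commutativity) is a valid alternative to the paper's direct computation of the word differences $u(t)^{-1}hv(t) =_G h\cdot u(t)^{-1}v(t)$.
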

\begin{proof}
We suppose that $X^{\pm} = \{ x_1,\ldots,x_n\}$, with
$x_1<x_2<\ldots<x_n$.
For each $x \in X \cup X^{-1}$, write $\bar{x}$ for the coset $Hx$.
Then $\bar{X}:=\{\bar{x}: x \in X\}$ is a generating set for $\bar{G}:= G/H$,
and each word $w$ over $X$ has an image $\bar{w}$ over $\bar{X}$.
Let $\rho: X \rightarrow \bar{X}$ be the map $x \mapsto \bar{x}$. Note that
$\rho$ might not be injective, but we may choose an injective
map  $\rho': \bar{X} \to X$ such that $\rho\rho'(\bar{x}) = \bar{x}$ for all
$\bar{x} \in \bar{X}$,
and then extend $\rho'$ to an injective monoid homomorphism
from words over $\bar{X}^{\pm}$ to words over $X^{\pm}$.

By \cite[Theorem 4.3.1]{ECHLPT}, $\bar{G}$ is shortlex automatic with respect to
the generating set $\bar{X}$; we define
$L_{\bar{G}}$ to be the shortlex language for $\bar{G}$.
Now we choose $L$ to be the set $\rho'(L_{\bar{G}})$.
Then, as the image of a regular set under a monoid homomorphism, $L$ is regular.
The words in $L_{\bar{G}}$ all have the form
$\bar{x}_1^{r_1}\cdots \bar{x}_n^{r_n}$, with $r_i \geq 0$,  and those in $L$
the form $x_1^{r_1}\cdots x_n^{r_n}$.

Now suppose that $v=x_1^{a_1}\cdots x_n^{a_n}$, $w=x_1^{b_1}\cdots x_n^{b_n}$
are elements of $L$ and $h \in H$ with $d_{\gam(G)}(v,hw) \leq 1$.
Then $d_{\gam(\bar{G})}(\bar{v},\bar{w}) \leq 1$,
and it follows from the proof of
\cite[Theorem 4.3.1]{ECHLPT} that $\Sigma_i | b_i-a_i | \leq B$,
for some constant $B$.
Hence, since $h= _G vxw^{-1}$, with $x \in X \cup X^{-1} \cup \{ \emptyword \}$,
the length of $h$ is bounded by $B+1$.  We can also see that,
where $v(j), w(j)$ denotes the prefixes of $v,w$ of length $j$,
each of the elements $v(j)^{-1}w(j)$ is represented by a product
$x_1^{r_1}\cdots x_n^{r_n}$ with $\Sigma |r_i|\leq 2B$.
It follows that the differences
$v(j)^{-1}hw(j)=_G hv(j)^{-1}w(j)$  are bounded in length.

Since $G$ is abelian, it is straightforward to show that $L$ has limited crossover
with respect to the pair $(Y,Y)$.
\end{proof}

\begin{proposition}\label{prop:vab}
Let $G$ be a finitely generated virtually abelian group, and let $H$ be a
subgroup.  Then $(G,H)$ is strongly synchronously coset automatic.
\end{proposition}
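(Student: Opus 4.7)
The plan is to reduce the statement to Proposition~\ref{prop:ab} via two finite-index reductions. Since $G$ is virtually abelian, by passing to the intersection of the finitely many $G$-conjugates of a finite-index abelian subgroup, we may assume $G$ contains a normal, finite-index abelian subgroup $A$. Set $A' := A \cap H$. Then $[H : A'] = [HA : A] \le [G : A] < \infty$ by the second isomorphism theorem, so $A'$ is a finitely generated abelian subgroup of finite index in $H$.

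Proposition~\ref{prop:ab} applied to $(A, A')$ gives a strong synchronous automatic coset system for this pair, and Lemma~\ref{lem:basic_properties}(iii) then promotes it to one for $(G, A')$, with coset language $L^{A'}$ over some finite generating set $X$ of $G$. It then remains to pass from \SSCA{} of $(G, A')$ to \SSCA{} of $(G, H)$, exploiting the finite index $n := [H : A']$. Fix a right transversal $h_1 = \groupid, h_2, \ldots, h_n$ for $A'$ in $H$, and assume (by the coset analogue of~\cite[Theorem~2.5.1]{ECHLPT}) that $L^{A'}$ has unique representatives. For each $v \in L^{A'}$ and each $k$, let $\phi_k(v)$ denote the unique word in $L^{A'}$ representing the $A'$-coset containing $h_k v$. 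Every $H$-coset $Hg = \sqcup_k A' h_k g$ is then represented in $L^{A'}$ by exactly the $n$ words $\phi_1(v), \ldots, \phi_n(v)$, where $v$ is any one of these representatives. Define $L^H \subseteq L^{A'}$ to consist of the shortlex-minimal representative selected from these $n$ candidates for each $H$-coset.

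The main obstacle is verifying that $L^H$ is regular and satisfies the synchronous fellow traveler condition for $(G, H)$. Regularity reduces to the assertion that each map $\phi_k$ is computed by a finite-state transducer on $L^{A'}$, which in turn follows from a uniform bound on the length difference $\bigl|\,|\phi_k(v)|_X - |v|_X\,\bigr|$; such a bound holds in the virtually abelian group $G$ because the $G$-conjugation action on $A$ factors through the finite quotient $G/A$ and each $|h_k|_X$ is bounded. For the synchronous fellow traveler condition, given $v, w \in L^H$ with $vx = hw$ for $h = a h_k \in H$ (with $a \in A'$) and $x \in X^\pm \cup \{\emptyword\}$, the identity $A' vx = A' \phi_k(w)$ permits invoking the \SSCA{} fellow traveler condition for $(G, A')$ between ${}_1 v$ and an $A'$-translate of ${}_1 \phi_k(w)$; linking this to ${}_h w$ via a bounded-distance correspondence that exploits the biautomaticity of the virtually abelian group $G$ (a consequence of~\cite[Theorem~4.3.1]{ECHLPT}) then yields the desired synchronous fellow traveler for $(G, H)$, completing the proof.
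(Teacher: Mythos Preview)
Your reduction to the pair $(G,A')$ via Proposition~\ref{prop:ab} and Lemma~\ref{lem:basic_properties}(iii) is sound. The gap is in the final step, passing from a \SSCA\ structure for $(G,A')$ to one for $(G,H)$ when $[H:A']<\infty$: the argument you sketch does not go through, and in fact the specific language $L^H$ you construct can fail to satisfy the fellow-traveller condition.

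The difficulty is the ``linking'' step. Given $v,w\in L^H$ with $vx=_G hw$ and $h=ah_k$, you correctly obtain synchronous fellow travelling of ${}_1v$ with an $A'$-translate of ${}_1\phi_k(w)$ from the \SSCA\ of $(G,A')$. But to connect this to ${}_hw$ you need ${}_1\phi_k(w)$ and ${}_{h_k}w$ to fellow travel, and nothing in the \SSCA\ of $(G,A')$ gives this, since $h_k\notin A'$. Your appeal to biautomaticity does not rescue the argument: virtually abelian groups admit \emph{some} biautomatic structure, but the words $w,\phi_k(w)$ lie in $L^{A'}$, which has no reason to be contained in (or to fellow-travel with) any biautomatic language for $G$. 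The citation of \cite[Theorem~4.3.1]{ECHLPT} is in any case to a result about abelian, not virtually abelian, groups.

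Here is a concrete failure. Take $G=\Z^2\rtimes\Z/2$ with $t$ swapping the generators $a,b$ of $A=\Z^2$, and $H=\langle t\rangle$, so $A'=1$ and $n=2$ with $h_2=t$. The natural $L^{A'}$ arising from your construction consists of the words $a^mb^n$ and $a^mb^n t$ (shortlex in $A$, times a transversal letter). Since $\phi_2(a^mb^n)=a^nb^mt$ is always longer, your $L^H$ is exactly $\{a^mb^n\}$. Now $v=a^mb^n$ and $w=a^nb^m$ both lie in $L^H$, and $vt=_G tw$. But at time $j\le\min(m,n)$ the vertices are $v(j)=a^j$ and $(tw)(j)=ta^j=b^jt$, at distance $2j+1$ in $\gam(G)$; the paths diverge linearly. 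So your $L^H$ is not a \SSCA\ language. The same computation shows that the transducer claim for $\phi_k$ fails: the word differences between $w$ and $\phi_2(w)$ are unbounded, so a bounded length difference alone is insufficient.

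The paper takes a quite different route: rather than passing through $(G,A')$, it builds the coset language for $(G,H)$ directly from a near-complement $C$ to $J:=H\cap F$ inside a finite-index free abelian normal subgroup (obtained via a Maschke-type argument over $\Z$), using an explicit biautomatic structure on $G/J$ and tracking how conjugation by transversal elements permutes the chosen generators of $C$. It is precisely this hands-on control of the conjugation action, absent from a black-box use of Lemma~\ref{lem:basic_properties}(iii), that makes the fellow-traveller verification succeed.
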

\begin{proof}
We shall construct first a coset language $L_1^H$ for $G$.
We have already considered the case when $G$ itself is abelian in
Proposition~\ref{prop:ab}. It will be convenient here, however, in
this special case to define a second language $L_2^H$,
which has additional properties that we shall need in the proof of
Proposition~\ref{prop:amalg_relhyp_ab}.

Let $F \unlhd G$ with $F$ free abelian and $|G:F|$ finite.
By Lemma~\ref{lem:basic_properties}(iii) it is sufficient to prove our
result for the subgroup $FH$ of $G$. But it will not be convenient to make
that assumption in the aforementioned application to
Proposition~\ref{prop:amalg_relhyp_ab} so, in the case when $G$ is nonabelian,
we shall assume from now on that $G=FH$, but not when $G$ is abelian.

Let $J := H \cap F$. Then, since either $G=FH$ or $G$ is abelian, we have
$J \unlhd G$.  We can find a subgroup $E \ge J$ of $F$ such that $E/J$ is
torsion-free, and $E$ is characteristic of finite index in $F$,
and hence normal of finite index in $G$.
(We can define $E \le F$ as the inverse image under the natural map
$F \rightarrow F/J$ of the $e$-th power of a complement in $F/J$ of the torsion 
subgroup $T$ of $F/J$, where $e$ is the exponent of $T$.)

If $G$ is abelian, then we choose $C$ to be any complement of $J$ in $E$.
Otherwise we apply Lemma~\ref{lem:bymaschke} below with $\widehat{G}=G/E$ to the
$\Z G/E$-module $E$ and its submodule $J$;
the submodule $U$ guaranteed by the lemma corresponds to a normal subgroup
$C$ of $G$ within $E$, with $J \cap C = 1$ and $|E:JC|$ finite.
Then, in either case, the free abelian group $JC$ is a direct product
$J \times C$, and has finite index in $G$ with $J$ and $C$ both normal in $G$.

We shall define both of our coset languages with respect to
a finite generating set $X$ for $G$ that is a union $X_J \cup X_C \cup X_T$, 
where $X_J$ and $X_C$ are finite generating sets for $J$ and $C$, and
$X_T$ is a set of (not necessarily unique) representatives
of the nontrivial cosets of $JC$ in $G$, satisfying the condition that
whenever a coset has nontrivial intersection with $H$, the representatives in
$X_T$ are all within $H$.

We describe first the construction of $L_1^H$.  The quotient $G/J$ is
virtually abelian with free abelian subgroup $JC/J$ of finite index.
For each $g \in G$, write $\bar{g}$ for the coset $Jg$. 
By \cite[Proof of Corollary 4.2.4]{ECHLPT}, there is an
automatic structure (actually a geodesic biautomatic structure with uniqueness)
for $\bar{G}:= G/J$ with language $L_{\bar{G}}$  consisting of words 
over a finite generating set $Z$ for $\bar{G}$ of the form $Z_C \cup Z_T$,
where $Z_C$ is a particular generating set for $\overline{CJ}$ and $Z_T$
is a set of (unique) representatives of the nontrivial cosets of
$\overline{CJ}$ in $\bar{G}$, satisfying the condition that whenever 
a coset has nontrivial intersection with $\bar{H}$ its representative is
chosen to be in $\bar{H}$. We let $K$ be the fellow traveller constant
associated with this automatic structure.
The subsets $X_C,X_T$ of $G$ that we need to define $X$ are chosen
to be subsets of $G$ that map bijectively under the map 
$g\mapsto \bar{g}$ to $Z_C,Z_T$, and such that $X_C \subseteq C$,
while $X_J$ can be any finite generating set of $J$.
So we have a bijection $\rho:X_C \cup X_T$ to $Z_C \cup Z_T$, and
we extend $\rho^{-1}$ to a monoid homomorphism that maps words over
$Z_C \cup Z_T$  to the corresponding words over
$X_C \cup X_T$.

The language $L_{\bar{G}}$ is defined in
\cite[Proof of Corollary 4.2.4]{ECHLPT}
to be a set of words of the form $\bar{w}$ or 
$\bar{w}\bar{t}$, where $\bar{w}$ is a word over $Z_C$, and $\bar{t} \in Z_T$.
We define $L_1^H := \rho^{-1}(L_{\bar{G}})$.
So, as the image of a monoid homomorphism, $L_1^H$ is regular,
and its elements have the form
$wt$, where $w$ is a word over $X_C$ and $t \in X_T \cup \{ \epsilon \}$.
We observe also that the set $Z_C$ and also the set of words $\bar{w}$ that
arise in this language are invariant under conjugation by elements of $\bar{G}$.

We claim that the language $L_1^H$ is a strong automatic coset system
for $(G,H)$.
We have seen that it is regular, and it contains a full set of coset
representatives of $H$ in $G$
(recall that $J \subseteq H$).

It remains to prove the fellow traveller property.
So suppose that $w_1t_1, w_2t_ 2\in L_1^H$, and
$x \in X \cup X^{-1} \cup \{ 1 \}$, $h \in H$
with \[ w_1t_1x =_G hw_2t_2.\quad (\ast)\]
We need to show that the paths ${}_1 w_1$ and ${}_h w_2$ fellow travel
(and hence so do ${}_1 w_1t_1x$ and ${}_h w_2t_2$).

Our first step towards this proof is to define $c_1 \in C$, $j_1 \in J$ and
$t_3 \in X_T$ such that $t_1x =_G j_1c_1t_3$.
Since there are only finitely many possible choices for each of $t_1$, and $x$,
we see that $c_1$ and $j_1$ are bounded in length.
Let $B$ be an upper bound on their lengths.
Now we find $w_3 \in L_1^H \cap (X_C^{\pm})^*$, with
$w_1 c_1 =_G w_3$ (and so $w_1 j_1c_1 =_G j_1w_3$) and, since $L_{\bar{G}}$ is
an automatic structure for $\bar{G}$,
we see that ${}_1w_1$ and ${}_1w_3$ fellow travel at distance $K|c_1| \leq KB$.
We now have $w_1t_1x =_G j_1w_3t_3$.

Now we consider the right hand side $hw_2t_2$ of the equation $(\ast)$.
We can find $j' \in J, t_4 \in X_T$,
with $h=_G j't_4$ and so $hw_2t_2 =_G j't_4w_2t_2 = _G j't_4 w_2t_4^{-1}t_4t_2$.
As we observed above, the
generating set $Z_C$ of $\overline{CJ}$ is closed under conjugation by
elements of $\bar{G}$ and so, for each generator $y$ that occurs
in the word $w_2$, the image $t_4yt_4^{-1}$ in $G/J$ is in $Z_C$.
The normality of $C$ in $G$ ensures that $t_4yt_4^{-1}$ is a generator in
the set $X_C$ that consists of inverse images under $\rho$ of the
elements of $Z_C$.
Let $w_4$ be the word formed from $w_2$ by replacing each of its generators $y$
by the generator in $X_C$ that represents $t_4yt_4^{-1}$. Then
$w_4 =_G t_4w_2t_4^{-1}$ and, by the invariance property of the language
$L_{\bar{G}}$ mentioned earlier,
the image of $w_4$ in $\bar{G}$ lies in  $L_{\bar{G}}$.
Now we define $c_2 \in C,\,j_2 \in J$
(also each bounded above in length by $B$),
and $t_5 \in X_T$ such that $t_4t_2=_G j_2c_2t_5$, and hence
$j't_4w_2t_2 =_G j'w_4j_2c_2t_5 =_G j'j_2w_5t_5$, where $w_5=_G w_4c_2$.
Just as for the words ${}_1w_1$ and ${}_1w_3$ discussed above,
we find that ${}_1 w_4$ and ${}_1 w_5$ fellow travel at distance $KB$.

Now recall that we have $w_1t_1x =_G hw_2t_2 \quad (\ast)$.
The left hand side of $(\ast)$ is equal in $G$ to $j_1w_3t_3$,
and the right hand side to $j'j_2w_5t_5$,
Since $X_T$ is a transversal of $JC$ in $G$ we have $t_3=t_5$
and, since $JC$ is a direct product of $C$ and $J$, we have $w_3=_G w_5$
and $j_1=_G j'j_2$. Since $L_{\bar{G}}$ is an automatic structure with 
uniqueness, we also have $w_3=w_5$ (as words).
Further, $j'$ is bounded in length by $2B$. 

So, since  the pairs of words $({}_1w_1,{}_1w_3)$, and
$({}_1 w_4,{}_1 w_5) = ({}_1 w_4,{}_1w_3)$ both $KB$-follow travel,
to complete our proof it suffices to show that ${}_1 w_4$ and ${}_h w_2$
fellow travel. We recall that for generators
$a_1,\ldots,a_n,b_1,\ldots,b_n \in X_C^{\pm 1}$ with
$b_i=_G t_4a_it_4^{-1}$, we have $w_2=a_1\cdots a_n$ and
$w_4=b_1\cdots b_n$. For each $i$, the word difference
$(a_1\cdots a_i)^{-1}t_4(b_1\cdots b_i)$ is equal in $G$ to $t_4^{-1}$,
and so ${}_1 w_4$ and ${}_{t_4} w_2$ $1$-fellow travel. 
Since $h=_G j't_4$, $j'$ is bounded in length and $JC$ is abelian,
it follows that ${}_1 w_4$ and ${}_h w_2$ fellow travel, and we are done.

We turn now to the definition of our second synchronous automatic coset
system $L_2^H$ in the case when $G$ is abelian.
In this case, we allow $X_T$ to be any finite set of elements from
$G \setminus JC$ that contains at least one representative of each nontrivial
coset of $JC$ in $G$. 
Further, the conditions on the generating sets $X_J$ and $X_C$ of $J$ and $C$
are different from those of $L_1^H$; they are chosen to ensure that
for all equations of the form  $t_1t_2t_3=_G jct_4$ with
$t_1,t_2,t_3,t_4 \in X^\pm_T$, $j \in J$ and $c \in C$,
the elements $j$ and $c$ and included in $X_J$ and $X_C$.
(This property is not used in the current proof, but it is required in the
proof of Proposition~\ref{prop:amalg_relhyp_ab} below.)

For our coset language $L_2^H$, we take the set of words of the form $wv$,
where $w \in \SLex(C,X_C)$, and $v$ is a word of length at most $2$ over $X_T$.
This language is regular, as it is the concatenation of two regular
languages, and this language contains representatives of all cosets of
$H$ within $G$.

It remains to prove the fellow traveller property,
and we can do this very much as we did for $L_1^H$.  We suppose 
that $w_1v_1, w_2v_2\in L_2^H$ with $w_1,w_2 \in \SLex(C,X_C)$ and $v_1,v_2$
words of length at most $2$ over $X_T$, and $x \in X \cup X^{-1} \cup \{ 1 \}$,
$h \in H$ with \[ w_1v_1x =_G hw_2v_2.\quad (\ast)\]
We can write $h = j't$ with $j' \in J$ and $t \in X_T \cup \{ 1 \}$, and so
$w_1v_1x =_G j'w_2tv_2$. Then $JCv_1x = JCtv_2$, so  $v_1x =_G jc tv_2$,
for some $j \in J$ and $c \in C$, and hence $j' =_J j$ and $w_1c =_C w_2$.
Since there are only finitely many possible $v_1$, $v_2$ and $t$, the lengths
of $j$ and $c$ are bounded. So, since $w_1,w_2 \in \SLex(C,X_c)$, they
$K$-fellow travel for some constant $K$ and hence $_{1}w_1v_1$ and $_{h}w_2v_2$
$K'$-fellow travel for some (larger) constant $K'$.
\end{proof}

\begin{lemma}
\label{lem:bymaschke}
Let $\widehat{G}$ be a finite group, let $V$ be a finite dimensional
torsion-free $\Z\widehat{G}$-module, and $W$ a submodule.
Then there exists a $\Z\widehat{G}$-submodule $U$ of $V$
with $U \cap W = \{0\}$ such that $V/(U \oplus W)$ is finite.
\end{lemma}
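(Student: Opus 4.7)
The plan is to reduce to the classical Maschke theorem by extending scalars from $\Z$ to $\Q$. Since $V$ is finitely generated and torsion-free as a $\Z$-module, $V$ is a free abelian group of finite rank, hence a full lattice inside the $\Q\widehat{G}$-module $V_\Q := V \otimes_\Z \Q$; similarly $W$ sits inside $W_\Q := W \otimes_\Z \Q$ as a full lattice. Note $V_\Q$ is finite dimensional over $\Q$ and $W_\Q$ is a $\Q\widehat{G}$-submodule.

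The key step is an application of Maschke's theorem: because $\widehat{G}$ is finite and $\Q$ has characteristic $0$, the group algebra $\Q\widehat{G}$ is semisimple, so there is a $\Q\widehat{G}$-submodule $U_\Q \subseteq V_\Q$ with $V_\Q = W_\Q \oplus U_\Q$. I then define
\[
U := U_\Q \cap V,
\]
which is automatically a $\Z\widehat{G}$-submodule of $V$ since both $V$ and $U_\Q$ are stable under the $\widehat{G}$-action.

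It remains to check the two required properties. First, $U \cap W \subseteq U_\Q \cap W_\Q = \{0\}$, so the sum $U + W$ inside $V$ is direct. Second, for finiteness of $V/(U\oplus W)$, I would argue by $\Q$-ranks: because $V$ is a lattice in $V_\Q$, for any $u \in U_\Q$ there is a nonzero integer $n$ with $nu \in V$, hence $nu \in U_\Q \cap V = U$. Thus $U \otimes_\Z \Q = U_\Q$, so $\mathrm{rank}_\Z(U) = \dim_\Q U_\Q$, and similarly $\mathrm{rank}_\Z(W) = \dim_\Q W_\Q$. Therefore
\[
\mathrm{rank}_\Z(U \oplus W) = \dim_\Q U_\Q + \dim_\Q W_\Q = \dim_\Q V_\Q = \mathrm{rank}_\Z(V),
\]
and a subgroup of full rank in the finitely generated free abelian group $V$ has finite index, so $V/(U\oplus W)$ is finite.

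There is no genuine obstacle here; the only delicate point is ensuring that the complement picked up over $\Q$ descends to a $\Z\widehat{G}$-submodule of large enough rank, and this is handled cleanly by intersecting with the lattice $V$ and using the fact that $V$ spans $V_\Q$ over $\Q$. The finite generation of $V$ as a $\Z$-module (i.e.\ its being finite-dimensional in the sense of the hypothesis) is what makes the rank-counting argument valid.
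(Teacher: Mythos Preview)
Your proof is correct and takes essentially the same approach as the paper: extend scalars to $\Q$, apply Maschke's theorem to obtain a $\Q\widehat{G}$-complement $U_\Q$ of $W_\Q$, and then descend to a $\Z\widehat{G}$-sublattice of full rank inside $U_\Q$. The only difference is in the descent step: the paper picks a $\Q$-basis of $U_\Q$ on which $\widehat{G}$ acts with integer matrices and scales it into $V$ by a common denominator, whereas you set $U = U_\Q \cap V$ directly --- your version is a bit slicker, and in fact the paper's claim that such an integral basis exists is most naturally justified by your observation that $U_\Q \cap V$ is already a full $\Z\widehat{G}$-lattice in $U_\Q$.
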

\begin{proof}
Let $\widehat{V}=V \otimes \Q$ and $\widehat{W}=W \otimes \Q$
be the corresponding $\Q\widehat{G}$-modules.
By Maschke's theorem, there exists a $\Q\widehat{G}$-submodule
$\widehat{U}$ of $\widehat{V}$ with
$\widehat{V}=\widehat{U} \oplus \widehat{W}$.
Let $e_1,\ldots,e_n$ be a $\Z$-basis of $V$, which we may consider also
as a $\Q$-basis of $\widehat{V}$. We can choose a basis $u_1,\ldots,u_k$
of $\widehat{U}$ such that
the matrices representing the action of $\widehat{G}$ have integer entries.
Define $\lambda_{ij} \in \Q$ by $u_i=\sum_{j=1}^n \lambda_{ij} e_j$.
Let $m$ be a common multiple of the denominators of all the $\lambda_{ij}$,
and define $U \subseteq V$ to be the $\Z$-module generated by the elements 
$m u_i,\ldots,m u_k$ of $\widehat{U}$. Then $U \oplus W$ has rank $n$,
and so must
have finite index in $V$.
\end{proof}

By Corollary \ref{cor:artincox},
the hypotheses of the following result are satisfied in particular when
$G_1$ is a Coxeter group or an Artin group of large type on its natural
generating set $X_1$, and $H$ is a parabolic subgroup.

\begin{proposition}\label{prop:synchegs}
Suppose that $H = \langle Y \rangle \le G_1 = \langle X_1 \rangle$ with
$Y \subset X_1$, where $H$ and $G_1$ satisfy the hypotheses of
Proposition \ref{prop:concatenate} with $G_1$ and $X_1$ in place of
$G$ and $X$.  Suppose that $H$ is also a subgroup of the finitely generated
abelian group $G_2$. Then $(G_1 *_H G_2,H)$ is strongly synchronously
coset automatic.
\end{proposition}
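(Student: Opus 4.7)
My plan is to realise $G_1 \ast_H G_2$ as the fundamental group $\pi_1(\GG)$ of a graph of groups over a connected graph $\lam$ with two vertices carrying $G_1$ and $G_2$, a single pair of edges $\{e,\be\}$ with edge groups both equal to $H$, and identity edge isomorphisms, and then to apply Theorem~\ref{thm:gog_coset_synch}. I will take a finite generating set $X_2$ for $G_2$ containing $Y$, chosen so that every $h\in H$ admits an $X_2$-geodesic lying in $(Y^\pm)^\ast$, and order both $X_1^\pm$ and $X_2^\pm$ so that all letters of $Y^\pm$ precede the rest; such an ordering of $X_1^\pm$ is furnished by the hypothesis on $G_1$.

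For the $G_1$ side of Theorem~\ref{thm:gog_coset_synch}(ii), Proposition~\ref{prop:concatenate} gives $\SLex^H_{G_1}$ as a strong synchronous automatic coset system with $1$-limited crossover with respect to $(Y,Y)$ and $\SLex^H_{G_1}\subseteq\Geo(G_1,X_1)$; the $Y$-first ordering forces the words of $\SLex^H_{G_1}$ not to begin with a letter of $Y^\pm$ (a prefix in $Y^\pm$ could be absorbed into $H$ and so contradict shortlex minimality), and leaves the empty word as the unique identity-coset representative. The geodesic concatenate-up property then supplies, for each $g\in G_1$, a geodesic factorisation $y_gz_g$, with $z_g\in\SLex^H_{G_1}$ the representative of $Hg$ and $y_g\in(Y^\pm)^\ast$ any $Y$-geodesic for the element $h\in H$ determined by $g=hz_g$.

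For the $G_2$ side I build a coset language $L^H_{G_2}$ from the shortlex automatic structure of $G_2$ of Proposition~\ref{prop:ab}, by taking each shortlex representative and stripping off its maximal $Y^\pm$ prefix. The commutativity of $G_2$, together with the $Y$-first ordering, implies that every shortlex representative factors uniquely as $y_gz_g$ with $y_g\in(Y^\pm)^\ast$ and $z_g$ beginning with no $Y^\pm$ letter (otherwise a transposition of a $Y^\pm$ letter past a non-$Y^\pm$ letter would produce an equal-length lex-smaller word), supplying the required factorisation and giving $L^H_{G_2}\subseteq\Geo(G_2,X_2)\setminus Y^\pm(X_2^\pm)^\ast$; the choice of $X_2$ ensures that the shortlex representative of every $h\in H$ lies in $(Y^\pm)^\ast$, so $\emptyword$ is the only element of $L^H_{G_2}$ representing the identity coset. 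Regularity of $L^H_{G_2}$ follows from the regularity of the shortlex language and the closure of regular languages under maximal-prefix stripping, and the strong synchronous coset fellow-travelling and $1$-limited crossover of $L^H_{G_2}$ with respect to $(Y,Y)$ are derived from the corresponding properties of Proposition~\ref{prop:ab} by using commutativity to transfer $H$-multiplications between the $y$- and $z$-parts of the shortlex factorisation.

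Condition (i) of Theorem~\ref{thm:gog_coset_synch} holds by construction, (ii) by the two preceding paragraphs, (iii) because $(H,H,\mathrm{id})$ is trivially $1$-stable with respect to $(Y,Y)$, and (iv) because each vertex of $\lam$ has exactly one incoming edge with the required $1$-limited crossover. The theorem then yields that $(G_1\ast_H G_2,H)$ is strongly synchronously coset automatic. I expect the main obstacle to be showing that $X_2$ can always be chosen so that every $h\in H$ admits an $X_2$-geodesic in $(Y^\pm)^\ast$: this needs a case analysis based on the finitely-generated-abelian structure of $G_2$ (passing if necessary to a finite-index subgroup in which $H$ has a direct complement, then using Lemma~\ref{lem:basic_properties}), and the subsequent transfer of the fellow-travelling and crossover properties from the ambient shortlex language of $G_2$ to the suffix-stripped language $L^H_{G_2}$ will itself require a short but careful argument.
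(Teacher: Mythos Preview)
Your approach is genuinely different from the paper's, and it contains a real gap.

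The paper does \emph{not} verify the hypotheses of Theorem~\ref{thm:gog_coset_synch} for the pair $(G_2,H)$. Instead it takes the coset language for $(G_2,H)$ from Proposition~\ref{prop:ab}, applies Theorem~\ref{thm:gog_coset} to get an asynchronous structure on the Higgins language $L^H$, and then upgrades to a synchronous structure by a direct length-control argument. The point is that the Proposition~\ref{prop:ab} language is built from coset representatives in $G_2/H$, so for $u_{i-1}\in L_2^H$ one has $u_{i-1}h_i =_{G_2} h_i u_{i-1}$ and hence $h_{i-1}=h_i$ and $u_{i-1}'=u_{i-1}$: the element $h_i$ simply passes through the $G_2$ syllables unchanged. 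All length variation therefore occurs in the $G_1$ syllables, and the dichotomy (a)/(b) recorded in the first paragraph of the proof of Proposition~\ref{prop:concatenate} bounds the total discrepancy $\sum_i\bigl||u_i|-|u_i'|\bigr|$ by $K^2$, which converts asynchronous into synchronous fellow travelling.

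Your route, by contrast, needs every $h\in H$ to have an $X_2$-geodesic in $(Y^\pm)^\ast$, i.e.\ $|h|_Y=|h|_{X_2}$. You correctly identify this as the obstacle, but your proposed fix does not work: if $H\le G_2'\le G_2$ with $[G_2:G_2']<\infty$, then $G_1\ast_H G_2'$ is \emph{not} of finite index in $G_1\ast_H G_2$ in general, so Lemma~\ref{lem:basic_properties}(iii) does not apply. For instance, with $H=\langle c\rangle$ central in $G_1=\langle a,c\mid [a,c]\rangle$ and $G_2=\langle b,c\mid [b,c]\rangle$, one has $G_1\ast_H G_2\cong F(a,b)\times\Z$; taking $G_2'=\langle b^2,c\rangle$ gives $G_1\ast_H G_2'\cong \langle a,b^2\rangle\times\Z$, of infinite index. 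Nor is it clear that an appropriate $X_2$ always exists for the given $Y$: you have no freedom in $Y$ (it is fixed as a subset of $X_1$), and ill-chosen transversal elements in $X_2$ can create short words in $H$ outside $(Y^\pm)^\ast$ (e.g.\ $G_2=\Z$, $H=2\Z$, $Y=\{2\}$, $X_2=\{2,3\}$ gives $|6|_{X_2}=2<3=|6|_Y$). Establishing that a good $X_2$ always exists would itself require a nontrivial argument, which you have not supplied.
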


\begin{proof}
We use the coset language $L_1^H$
constructed in the proof of Proposition~\ref{prop:concatenate} for $(G_1,H)$
(where this language is $\SLex^H$).
We extend $Y$ to a generating set $X_2$ of $G_2$
such that $H \cap X_2 = Y$ and define the coset language $L_2^H$ for
$(G_2,H)$ with respect to $X_2$ as in the proof of Proposition \ref{prop:ab},
where it is called $L^H$.

We claim that the language $L^H$ over $X := X_1 \cup X_2$ constructed for
$(G_1 \ast_H G_2,H)$ in the proof of Theorem~\ref{thm:gog_coset} is
synchronous, so we need to work through that proof in our current context, and
we shall adopt the notation used in that proof without further comment.
Note first that the graph $\lam$ has two vertices and a single edge,
which lies in the maximal tree $\TT$, so no letter $s_e$ appears in any
deflated words in the language, and Case (4) in the proof of
Theorem~\ref{thm:gog_coset} does not arise.

Since both $L_1^H$ and $L_2^H$ contain unique representatives of each coset of
$H$, so does $L^H$. So Case (1) in the proof of Theorem~\ref{thm:gog_coset},
where the two different words $w,w' \in L^H$ lie in the same coset, does
not arise. But the arguments used in the proof of that case are applied to
each of the cases (i.e. cases (2) and (3)) in which $Hwx = Hw'$ with $x \in X$.
Recall that $w = u_0u_1 \cdots u_k$, where $e_1e_2 \cdots e_k$ is the path
in $\lam$ associated with $w$.
We shall just consider Case (2), in which $u_k$ and $x$ lie in the same
subgroup $G_{\tau(e_k)}$. The argument in Case (3) is similar.

In Case~(2) we have $u_kx =_G h_ku_k'$ for some $h_k \in H$, where the
$X$-length of $h_k$,
which is the same as its $Y$-length, is bounded above by some constant $K$.
If $k=0$, then the synchronous fellow travelling of ${}_1w$ and
${}_{h}w'$ (where $h = h_0$) follows from the
fact that $L_1^H$ and $L_2^H$ are both synchronous automatic coset systems.
So we may assume that $k>0$.

Now, for $1 \le i \le k$, we have  
$u_{i-1}h_i =_G  h_{i-1}u_{i-1}'$ for some $h_{i-1} \in H$.
Then $w' = u_0'u_1' \cdots u_{\ell}'$ for some $\ell \le k$, where
$u_{\ell}' \ne \emptyword$, and $u_i = \emptyword$ for $\ell < i \le k$. 
Note that if $h_i=_G 1$ for some $i$ then, by the uniqueness property of $L^H$,
we have $u_{j}=u_{j}'$ and $h_{j}=_G 1$ for all $j < i$.

Suppose next that $|h_i|=1$ for some $i$, so $h_i$ is a
generator in $Y^{\pm}$. If $u_{i-1} \in L_2^H$, then $h_{i-1}=h_i$ and
$u_{i-1}=u_{i-1}'$.  If $u_{i-1} \in L_1^H$ then, as stated in the first
paragraph of the proof of Proposition~\ref{prop:concatenate}, we have either
\begin{mylist}
\item[(a)] $||u_{i-1}| - |u_{i-1}'|| \le 1$ and $h_{i-1} =_G 1$; or
\item[(b)] $|u_{i-1}| = |u_{i-1}'|$ and $|h_{i-1}|=1$.
\end{mylist}
So in fact one of (a) and (b) must apply irrespective of whether
$u_{i-1}$ is in $L_1^H$ or $L_2^H$.

Now if $|h_i| = m > 1$, then $h_i$ is a product $x_1 \cdots x_m$
of $m$ elements of $Y^{\pm}$. We can then apply the above argument to each
of $x_1, \ldots, x_m$ in turn, yielding equations
$u_{i-1}x_1 =_G y_1v_1$,  $v_1x_2 =_G y_2v_2, \ldots, v_{m-1}x_m=_G y_mv_m$,
where each $v_i \in L^H$, each $y_i \in Y^{\pm 1} \cup \{\emptyword\}$,
$v_m = u_{i-1}'$, and $h_{i-1} =_G y_1 \cdots y_k$.
So, since (a) or (b) applies to each of these equations, we have either
\begin{mylist}
\item[(i)] $|h_{i-1}| < |h_i|$ and  $||u_{i-1}| - |u_{i-1}'|| \le |h_i|$; or
\item[(ii)] $|h_{i-1}| = |h_i|$ and  $|u_{i-1}| = |u_{i-1}'|$.
\end{mylist}

In particular, since $|h_k| \le K$, we have $|h_i| \le K$ for all $i$.
So Case (i) can occur for at most $K$ values of $i$, and hence
\[\sum_{i=1}^{k} ||u_{i-1}| - |u_{i-1}'|| \le K^2.\]
It is proved in Theorem~\ref{thm:gog_coset} that the paths labelled $w$ and
$w'$ asynchronously $L$-fellow travel for some constant $L$ and that, for each
$i$, the beginnings and ends of the subpath labelled $u_i$ correspond
to those of $u_i'$ in the fellow travelling. From the above inequality, we
see that, if the beginnings of these subpaths labelled $u_i$ and $u_i'$ are
at distances $i_1$ and $i_2$ from the basepoint, then $|i_1-i_2| \le K^2$.
It follows that ${}_1w$ and ${}_hw'$ synchronously fellow travel
with constant at most $L+K^2$, which completes the proof.
\end{proof}

\begin{proposition}
\label{prop:amalg_relhyp_ab}
Suppose that the group $G_1$ is finitely generated
and hyperbolic relative to a collection of
abelian subgroups, and let $H$ be one of those subgroups.
Suppose that $H$ is also a subgroup of the finitely generated
abelian group $G_2$. Then $(G_1 *_H G_2,H)$ is strongly synchronously
coset automatic.
\end{proposition}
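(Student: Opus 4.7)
The strategy is to establish strong synchronous coset automatic systems on each side of the amalgamation, combine them using Theorem~\ref{thm:amalgprod_coset2} to produce a strong asynchronous system for $(G_1 *_H G_2, H)$, and then upgrade to synchronous via Proposition~\ref{prop:synchsub}.

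First I would apply Proposition~\ref{prop:relhyp} to $G_1$ with $\omega_0$ chosen so that $H = H_{\omega_0}$. The hypotheses are satisfied because every peripheral subgroup is abelian and hence admits a geodesic biautomatic structure over any finite generating set by \cite[Theorem~4.3.1]{ECHLPT}. This furnishes a finite generating set $X_1$ of $G_1$, together with $Y := X_1 \cap H$ generating $H$, and a coset language $L_1^H \subseteq \Geo(G_1, X_1) \cap [(X_1^\pm)^* \setminus Y^\pm(X_1^\pm)^*]$ that makes $(G_1, H)$ strongly synchronously coset automatic with maximal crossover with respect to $(Y, X_1 \cap H_\omega)$ for every peripheral subgroup $H_\omega$.

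Next I would extend $Y$ to a finite generating set $X_2$ of $G_2$ satisfying $X_2 \cap H = Y$, and invoke the abelian case of the construction in the proof of Proposition~\ref{prop:vab} to obtain the coset language $L_2^H$ for $(G_2, H)$. Its representatives have the special form $wv$ with $w \in \SLex(C, X_C)$ and $v$ a word of length at most two over $X_T$; moreover, the generating sets $X_J, X_C, X_T$ are chosen so that every product $t_1 t_2 t_3$ of three letters from $X_T^\pm$ equals in $G_2$ a product $j c t_4$ with $j \in X_J$, $c \in X_C$, and $t_4 \in X_T$. Applying Theorem~\ref{thm:amalgprod_coset2} to these two coset systems then produces a strong asynchronous coset automatic system for $(G_1 *_H G_2, H)$ whose language $L^H$ is the Higgins coset language built from $L_1^H$ and $L_2^H$.

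The main obstacle is the final step: verifying the hypothesis of Proposition~\ref{prop:synchsub} that $L^H \cap \Geo^H$ is still a coset language for $H$ in $G_1 *_H G_2$. My plan here is to take an arbitrary geodesic coset representative $w \in \Geo^H$, apply the cascade process of Definition~\ref{def:cascade} to convert $w$ into a Higgins normal form $w' \in L^H$ representing the same coset, and argue that no length is gained. The geodesicity of $L_1^H$ provided by Proposition~\ref{prop:relhyp} handles the $G_1$-factors, while the absorption property of $L_2^H$ ensures that cascading the $G_2$-blocks back into the prescribed form $wv$ does not inflate their length, because any excess $X_T$-letters generated by the cascade collapse back into the allowed pattern. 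This careful length-bookkeeping, analogous in spirit to the geodesicity argument in the proof of Theorem~\ref{thm:concat_synch}, is the delicate combinatorial heart of the proof; the absorption property of $L_2^H$ was designed precisely for this application, as explicitly noted in the proof of Proposition~\ref{prop:vab}.
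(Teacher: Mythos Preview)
Your overall strategy matches the paper's: build $L_1^H$ from Proposition~\ref{prop:relhyp}, build $L_2^H$ as the second language from Proposition~\ref{prop:vab}, combine to get an asynchronous Higgins coset language $L^H$, and then show $L^H \cap \Geo^H$ is a coset language so that Proposition~\ref{prop:synchsub} applies. Your use of Theorem~\ref{thm:amalgprod_coset2} rather than Theorem~\ref{thm:gog_coset} is a harmless variation, since maximal crossover on the $G_1$ side suffices and you do not then need crossover for $L_2^H$.

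There is, however, a genuine gap in your set-up of the generating sets. You write ``extend $Y$ to a finite generating set $X_2$ of $G_2$ satisfying $X_2\cap H=Y$'' and then invoke the $L_2^H$ construction. But that construction does not let you fix $Y$ in advance: it forces $X_2=X_J\cup X_C\cup X_T$ where $X_J\subseteq J\le H$ must contain all the elements $j$ arising in the absorption relations $t_1t_2t_3=_Gjct_4$. These $j$'s need not lie in the $Y$ handed to you by Proposition~\ref{prop:relhyp}, so in general $X_2\cap H\supsetneq Y$. If $X_1\cap H\ne X_2\cap H$, your length-bookkeeping in the geodesicity step breaks down: when a $Y$-letter produced by collapsing three $X_T$-letters in a $G_2$-block is pushed left into the adjacent $G_1$-block, that letter must be a generator on the $G_1$ side as well, and the replacement of a $G_1$-block by a word in $L_1^H$ must still be length-preserving over the enlarged alphabet.

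The paper handles this explicitly by an iteration: first enlarge $X_1$ as Proposition~\ref{prop:relhyp} demands, then enlarge $X_2$ (and hence $Y$) as the $L_2^H$ construction demands, and finally \emph{rebuild} $L_1^H$ over the enlarged $X_1$, which is permitted because Lemma~\ref{lem:AC} allows any finite generating set containing the required finite set $\HH'$ and because the abelian peripheral $H$ is geodesically biautomatic over any finite generating set. Once $X_1\cap H=X_2\cap H=Y$ is arranged, the rest of your sketch (and the paper's) goes through: $G_1$-blocks are replaced by equal-length words in the geodesic language $L_1\subset\Geo(G_1,X_1)$, and $G_2$-blocks are rearranged using abelianness and the absorption property to land in $L_2^H$, with any surplus $Y$-letters pushed into neighbouring blocks without increasing total length. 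You should insert this compatibility step before invoking the cascade/geodesicity argument.
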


\begin{proof}
The idea of the proof is first to find coset languages $L_1^H$ and $L_2^H$ for
$(G_1,H)$ and $(G_2,H)$ with respect to suitable generating sets $X_1$ and
$X_2$, then to use Theorem \ref{thm:gog_coset} to find a strong
asynchronous automatic coset system $L^H$ for $(G_1 *_H G_2,H)$,
and finally to apply Proposition \ref{prop:synchsub} to find a synchronous
subsystem within $L^H$.
For $L_1^H$ we use the language $L_H$ constructed in the proof of
Proposition~\ref{prop:relhyp}, and for $L_2^H$ we use the language also
called $L_2^H$ from the proof of Proposition~\ref{prop:vab}.

For the application of Proposition \ref{prop:synchsub}, we need to choose the
generating sets $X_1$ and $X_2$ for $G_1$ and $G_2$ such that
$Y := X_1 \cap H = X_2 \cap H$.
It is not a problem to find generating sets $X_1$, $X_2$ for $G_1$, $G_2$ satisfying
this condition. But the constructions of $L_1^H$ and $L_2^H$ both involve
the addition of new generators to $Y$. We can handle this situation as
follows.  First we extend $X_1$ (and so also $X_2$ and $Y$) during the
construction of $L_1^H$. Then we further extend $X_2$ (and so also $X_1$ and
$Y$) during the construction of $L_2^H$.  Since there is a geodesic
biautomatic structure for $H$ on any finite generating set, Lemma \ref{lem:AC}
allows us to reconstruct $L_1^H$ using the new generating set $X_1$.

We see from the proof of Proposition~\ref{prop:relhyp} that $L_1^H$
consists of those words in the geodesic biautomatic language $L_1$ for $G_1$
that do not begin with a letter in $Y^{\pm}$.

As stated earlier, for the language $L_2^H$ we use the second coset automatic
structure in the  Proposition~\ref{prop:vab}, which is also named $L_2^H$ there.
So $X_2 = X_J \cup X_C \cup X_T$, where $C$ and $J$ are disjoint free abelian
subgroups of $G_2$ with $|G_2:JC|$ finite, $JC \cap H = J$, and $X_T$ contains
a transversal for $JC$ in $G_2$, The elements of $L_2^H$ are words of the form
$wv$, where $w \in \SLex(C,X_C)$ and $v$ is a word of length at most 2 over $X_T$.

Now let $X = X_1 \cup X_2$ and let
$w \in (X^{\pm})^*$ be a shortest representative of its coset
of $H$ in $G = G_1 *_H G_2$. Then we can write $w$ as $w_1w_2 \cdots w_k$, where each
$w_i$ lies alternately in $(X_1^{\pm})^*$ or in $(X_2^{\pm})^*$,
and $w_i$ is a nonempty word that does not begin with a
generator from $Y^{\pm}$. We aim to replace $w$ with a word $v$ of the
same length in the same coset of $H$ such that, in the corresponding
decomposition $v=v_1v_2 \cdots v_k$, each $v_i$ is in $L_1^H$ or in $L_2^H$.
If we can do this, then $v \in L^H \cap \Geo^H$ representing $Hw$, and we can
apply Proposition \ref{prop:synchsub} to deduce the existence of a synchronous
subsystem of $L^H$.

Since the words $w_i$ that lie in $G_1$ must be geodesic words over $X_1$,
we may replace them if necessary by words of the same length representing
the same group elements that lie in the geodesic biautomatic language $L_1$ for
$G_1$. Then, since we are assuming that $w_i$ does not begin with a
letter in $Y$, we have $w_i \in L_1^H$. (This replacement may decrease
$|w_i|$ and increase $|w_{i-1}|$, but provided that we replace
the words $w_i$ in order of decreasing $i$, this is not a problem.)

We may assume that the words $w_i$ in $G_2$ contain no generators in $Y$,
since these could be moved to the left of the word. We may also assume that
the letters in $w_i$ from $X_T^{\pm}$ lie at the end of the word. If
we have three or more such letters then, from our choice of $X_2$, we can
replace them with a word of the same length containing a generator from
$Y$. So we may replace $w_i$ by a word $v_i = u_1u_2$, where $u_1$ and $u_2$
are words over $X_C$ and $X_T$, respectively, and $|u_2| \le 2$.
We may also assume that $u_1$ is the shortlex least representative over
$X_C$ of the element that it represents, and hence $v_i \in L_2^H$,
which completes the proof.
\end{proof}

\begin{example}
In this example we note that there is a pair $(G,H)$ that is strongly 
synchronously coset automatic, but
computer experiments suggest that it does not have  $\lambda$-limited crossover
for any $\lambda$ with respect to any generating set $Y$ of $H$. (But we
have no means of proving that.)
The group $G$ is the trefoil knot group (or the $3$-string braid group)
$\langle x,y \mid xyx = yxy \rangle$ and $H$ is the free abelian rank $2$
subgroup $\langle x,d \rangle$, where $d$ is the central element $(xyx)^2$.
\end{example}

\Addresses

\end{document}